\DeclareMathOperator{\ob}{Ob}
\DeclareMathOperator{\Hom}{Hom}
\DeclareMathOperator{\mor}{Mor}
\DeclareMathOperator{\Rmap}{\mathbb{R}Map}
\DeclareMathOperator{\map}{Map}
\DeclareMathOperator{\ho}{Ho}
\DeclareMathOperator{\End}{End}
\DeclareMathOperator{\hoEnd}{HoEnd}
\DeclareMathOperator{\hoHom}{HoHom}
\DeclareMathOperator{\REnd}{\mathbb{R}End}
\newcommand{\hocolimsub}[1]{\begin{array}[t]{cc} \textup{hocolim} \\
[-1.2mm] \scriptstyle{#1} \end{array}}
\DeclareMathOperator{\colim}{colim}
\renewcommand{\lim}{\mathrm{lim}}
\newcommand{\cob}{\mathsf{CoB}}
\newcommand{\corb}{\mathsf{CoRB}}
\newcommand{\pab}{\mathsf{PaB}}
\newcommand{\parb}{\mathsf{PaRB}}
\renewcommand{\bmod}{\overline{\mathcal{M}}}
\newcommand{\pb}{\mathsf{PB}}
\newcommand{\prb}{\mathsf{PRB}}
\newcommand{\rb}{\mathsf{RB}}
\newcommand{\br}{\mathsf{B}}
\newcommand{\ugt}{\widehat{\underline{\mathrm{GT}}}}
\newcommand{\gt}{\widehat{\mathrm{GT}}}
\newcommand{\Set}{\mathbf{Set}}
\renewcommand{\S}{\mathbf{S}}
\newcommand{\G}{\mathbf{G}}
\newcommand{\Op}{\mathbf{Op}}
\newcommand{\Grp}{\mathbf{Grp}}
\newcommand{\Pro}{\mathrm{Pro}}
\newcommand{\bM}{\mathbb{M}}
\newcounter{dummy} \numberwithin{dummy}{section}
\newtheorem{thm}[dummy]{Theorem}
\newtheorem{prop}[dummy]{Proposition}
\newtheorem{lemma}[dummy]{Lemma}
\newtheorem{cor}[dummy]{Corollary}
\newtheorem*{thm*}{Theorem}
\theoremstyle{definition}
\newtheorem{definition}[dummy]{Definition}
\newtheorem{cons}[dummy]{Construction}
\newtheorem{example}[dummy]{Example}
\newtheorem{remark}[dummy]{Remark}
\begin{document}
\title[Operads of genus zero curves and the GT group]{Operads of genus zero curves and the Grothendieck-Teichm\"{u}ller group}
\author{Pedro Boavida de Brito, Geoffroy Horel and Marcy Robertson}

\address{Dept. of Mathematics, Instituto Superior Tecnico\\ University of Lisbon \\ Lisboa, Portugal.}
\email{pedrobbrito@tecnico.ulisboa.pt} 
\address{LAGA, Institut Galil\'{e}e, Universit\'{e} Paris 13 \\ 99 avenue Jean-Baptiste Cl\'ement\\ 93430 Villetaneuse, France.}
\email{horel@math.univ-paris13.fr}
\address{School of Mathematics and Statistics \\ The University of Melbourne \\ Victoria 3010 \\ Australia}\email{marcy.robertson@unimelb.edu.au}

\begin{abstract}
We show that the group of homotopy automorphisms of the profinite completion of the genus zero surface operad is isomorphic to the (profinite) Grothendieck-Teichm\"{u}ller group. Using a result of Drummond-Cole, we deduce that the Grothendieck-Teichm\"{u}ller group acts nontrivially on $\bmod_{0,\bullet+1}$, the operad of \emph{stable} curves of genus zero. As a second application, we give an alternative proof that the framed little $2$-disks operad is formal. 

\end{abstract}

\maketitle

\section{Introduction} 

The moduli space of genus $g$ curves with $n$ marked points, $\mathcal{M}_{g,n}$, is defined over $\mathbb{Q}$ and as such its geometric fundamental group 
\[\pi_1^{geom}(\mathcal{M}_{g,n}):=\pi_1^{et}(\bar{\mathbb{Q}} \times_{\mathbb{Q}} \mathcal{M}_{g,n})\]
has an action of the absolute Galois group $\mathrm{Gal}(\bar{\mathbb{Q}}/\mathbb{Q})$. In \cite{groth}, Grothendieck proposed to study $\mathrm{Gal}(\bar{\mathbb{Q}}/\mathbb{Q})$ via its action on the geometric fundamental groups of \emph{all} the stacks $\mathcal{M}_{g,n}$ and the natural maps relating these for various $g$ and $n$. This collection he called the ``Teichm\"uller tower''.

A motivation for this idea was a theorem of Belyi's (see \cite{Belyi}) which implies that the action of $\mathrm{Gal}(\bar{\mathbb{Q}}/\mathbb{Q})$ on the geometric fundamental group $\pi_1^{geom}(\mathcal{M}_{0,4}) \cong
\widehat{F}_2$ is faithful. It follows that the absolute Galois group acts faithfully on the whole Teichm\"uller tower and it is an open question whether there are other automorphisms. An appealing aspect of this program is that the Teichm\"uller tower is a purely topological object since the geometric fundamental group of $\mathcal{M}_{g,n}$ is also the profinite completion of the mapping class group $\Gamma_{g,n}$ of a genus $g$ surface with $n$ marked points. In this way, Grothendieck's proposal creates a remarkable and unexpected bridge between number theory and low dimensional topology.

At the genus zero level, Ihara showed that the image of the action of $\mathrm{Gal}(\bar{\mathbb{Q}}/\mathbb{Q})$ on the geometric fundamental group of $\mathcal{M}_{0,4}$ lies in an explicitly defined profinite group $\gt$ constructed by Drinfel'd and called the Grothendieck-Teichm\"uller group. It is a longstanding problem to determine whether this injection $\mathrm{Gal}(\bar{\mathbb{Q}}/\mathbb{Q})\to\gt$ is an isomorphism. Given this, and granting the hypothetical relation between the absolute Galois group and the Teichm\"uller tower, one may wonder whether there is a relation between $\gt$ and this tower.

In this paper, we show that $\gt$ is the group of homotopy automorphisms of the genus zero Teichm\"uller tower. We do so for an \emph{operadic} definition of the \emph{genus zero Teichm\"uller tower} which encodes the natural relations between curves, as observed by Grothendieck. To define the genus zero Teichm\"uller tower, we replace \emph{marked points} by \emph{boundary components}. More precisely, we replace the group $\Gamma_{0,n}$ by $\Gamma_{0}^n$, the mapping class group of a genus zero Riemann surface with $n$ boundary components. This is not a drastic change since there is an isomorphism $\Gamma_0^n\cong \Gamma_{0,n}\times\mathbb{Z}^{n}$ obtained by collapsing boundary components to points. The advantage, however, is that the collection of spaces $\{B\Gamma_{0}^{n+1} \}_{n\geq 0}$ supports a much richer algebraic structure because of this small change. Indeed, two Riemann surfaces with boundary can be glued together along one of their boundary components. This gives us composition maps
\[B\Gamma_{0}^{n+1}\times B\Gamma_{0}^{m+1}\to B\Gamma_{0}^{n+m}.\]
If we replace the group $\Gamma_0^{n+1}$ by a certain homotopy equivalent groupoid $\mathcal{S}(n)$, we obtain the structure of an operad on the collection of spaces $\{B\mathcal{S}(n)\}$. This means that the composition maps satisfy associativity, $\Sigma$-equivariance and unital conditions. This object is denoted $\mathcal{M}$ and called the genus zero surface operad (we refer the reader to Definition~\ref{def:surface_operad} for a more precise definition). It is a suboperad of the operad constructed in \cite[Definition 2.3]{tillmann} and it is equivalent to the classical framed little $2$-disks operad (Proposition ~\ref{wahlprop}). Our first main theorem (Theorem \ref{mainthm}) can then be stated as follows:

\begin{thm*}
The group $\gt$ is isomorphic to the group of homotopy automorphisms of the profinite completion of the genus zero surface operad.
\end{thm*}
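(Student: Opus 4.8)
The plan is to prove the theorem in four stages: reduce from $\mathcal{M}$ to the framed little $2$-disks operad, replace the profinite topological operad by a profinite groupoid model, identify homotopy automorphisms with honest automorphisms of that model, and finally compare with the parenthesized braid operad to extract $\gt$. First I would invoke Proposition~\ref{wahlprop} to replace $\mathcal{M}$ by the framed little $2$-disks operad $\mathcal{D}_2\rtimes\mathrm{SO}(2)$, so that it suffices to compute the homotopy automorphism group of its profinite completion. Each arity here is aspherical: the space of $n$-ary operations is a $K(\rb_n,1)$, where $\rb_n$ is the ribbon (framed) braid group, and the operad of fundamental groupoids is the parenthesized ribbon braid operad $\parb$. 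Using that the profinite completion of a $K(\pi,1)$ is the classifying object of the profinite completion of its fundamental groupoid, the profinite completion of the framed little disks operad is modeled by $\widehat{\parb}$, the levelwise profinite completion of $\parb$ regarded as an operad in profinite groupoids.

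Second, because every arity is an aspherical profinite object, I expect the derived space of operad endomorphisms to be homotopically discrete on $\pi_0$ in the relevant sense, so that $\pi_0\,\mathrm{Aut}^h(\widehat{\mathcal{M}})$ is identified with the group of automorphisms of $\widehat{\parb}$ in profinite groupoid operads, taken up to natural isomorphism. This transports the problem from topology to the combinatorics of profinite braid-type groups, and it rests on the good behaviour of profinite completion for operads with $K(\pi,1)$ arities; I would isolate this as a lemma, since it is the mechanism by which the homotopical statement becomes an algebraic one.

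Third, I would compute $\mathrm{Aut}(\widehat{\parb})$ by comparison with the parenthesized braid operad $\pab$. The ribbon braid groups sit in split extensions $1\to\mathbb{Z}^n\to\rb_n\to\br_n\to 1$, with the $\mathbb{Z}^n$ generated by the ribbon twists on the individual strands; operadically this exhibits $\parb$ as a semidirect product of $\pab$ with the framing operad coming from $\mathrm{SO}(2)$. Building on the known identification $\mathrm{Aut}(\widehat{\pab})\cong\gt$, the braid direction is then understood, and it remains to show that restriction along $\pab\to\parb$ is an isomorphism on automorphism groups: that every automorphism of $\widehat{\pab}$ extends uniquely over the framing and that the framing introduces no new automorphisms.

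The hard part will be this last rigidity statement. The claim is that an operad automorphism must send the twist on each strand to a fixed power determined by the cyclotomic parameter $\lambda\in\widehat{\mathbb{Z}}^{\times}$ already carried by the underlying $\gt$-element, so that the framing direction contributes no independent freedom. I would pin this down by characterizing the generating twist through its operadic interaction with composition and with the inherited $\mathrm{SO}(2)$-action, using the semidirect product compatibility to force the exponent to coincide with the $\lambda$ governing the action on the center in the braid part; the profinite power $\tau^{\lambda}$ makes sense because each twist generates a copy of $\widehat{\mathbb{Z}}$. Verifying that the resulting extension is a well-defined, $\Sigma$-equivariant operad automorphism, and that the correspondence is bijective, is where the genuine work lies. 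Assembling the three identifications then yields $\pi_0\,\mathrm{Aut}^h(\widehat{\mathcal{M}})\cong\mathrm{Aut}(\widehat{\parb})\cong\mathrm{Aut}(\widehat{\pab})\cong\gt$.
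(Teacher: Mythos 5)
Your proposal is correct and follows essentially the same route as the paper: pass to the groupoid model $\parb$, use goodness of the pure ribbon braid groups $\prb(n)\cong\pb(n)\times\mathbb{Z}^n$ (your ``good behaviour'' lemma is exactly Lemma \ref{lemm: goodness of pure ribbon braid groups}) to identify the completion of $\mathcal{M}$ with $BN\widehat{\parb}$, reduce homotopy automorphisms to object-fixing automorphisms of $\widehat{\parb}$ (Propositions \ref{prop:hoend} and \ref{prop:0toho}), and then show that restriction along $\pab\subset\parb$ is an isomorphism onto $\End_0(\widehat{\pab})\cong\ugt$ (Propositions \ref{prop: endomorphisms fixing the objects} and \ref{prop:operadic definition of gt}). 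Your ``hard part'' — that the twist must go to $\tau^{\lambda}$ with $\lambda$ the cyclotomic parameter, so the framing contributes no independent freedom — is precisely the mechanism of the paper's Proposition \ref{prop: endomorphisms fixing the objects}(2), where the relation $\tau\circ id=\beta\cdot\sigma\beta\cdot(id\circ(\tau,\tau))$ pins down the twist in terms of the braiding.
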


Apart from the gluing along boundary components, there are also natural maps $\Gamma_{0}^{n+1} \to \Gamma_0^{n}$
for $n \geq 0$, corresponding to filling in boundary components (i.e. extending diffeomorphisms to the missing disks by the identity). We prove in section \ref{sec:unital} a variant of the above theorem in which we incorporate these operations to the genus zero surface operad. This does not affect the conclusion and the group of automorphisms remains $\gt$.

The theorem implies that there is a faithful action of $\mathrm{Gal}(\bar{\mathbb{Q}}/\mathbb{Q})$ on the profinite completion of the genus zero surface operad. If we look at this result aritywise, this is not particularly surprising as the group $\widehat{\Gamma}_{0,n+1}\times\widehat{\mathbb{Z}}^{n+1}$ has an obvious action of $\gt$ that is faithful when $n$ is at least $3$. The difficulty in this theorem is to show that this action of $\gt$ is compatible with the operad structure and accounts for all the operad automorphisms. A corollary of this result is a new proof of the formality of the operad $\mathcal{M}$, or equivalently, the operad of framed little disks (see Section~\ref{section:formality}).

The compatibility of the $\gt$ action with the operad structure is somewhat related to a result of Hatcher, Lochak and Schneps \cite{hatchertower} that a certain subgroup of $\gt$ acts on the collection of the profinite completion of the pure mapping class groups $\Gamma_{g,n}^m$ of a genus $g$ Riemann surface with $n$ punctures and $m$ boundary components.  They show that this action is compatible with certain geometric operations relating these groups: it preserves conjugacy classes of Dehn twists along closed embedded curves and it is compatible with the operation of extending a diffeomorphism on a subsurface by the identity. 

When dealing with profinite completions, an important observation is that the profinite completion of an operad in spaces is no longer an operad, but rather an operad ``up to homotopy''. This technical detail requires the use of $\infty$-operads and is the topic of Sections \ref{sec:operads} and \ref{sec:pro-operads}.  Another key step in the proof is to replace the genus zero surface operad by a homotopy equivalent one: the parenthesized ribbon braid operad. This operad has a very combinatorial description that makes computing automorphisms more practical.

Returning to $\mathcal{M}_{0,n}$, the gluing of curves along marked points creates a nodal singularity and so the collection $\{\mathcal{M}_{0,n}\}$ does not form an operad. This can be fixed if we allow curves with singularities, i.e. if we replace the schemes $\mathcal{M}_{0,n}$ by their compactifications $\bmod_{0,n}$, the moduli spaces of \emph{stable} genus zero curves with $n$ marked points. Indeed, the collection of spaces $\{\bmod_{0,n+1}\}$ has the structure of an operad; the composition maps
\[\bmod_{0,n+1}\times\bmod_{0,m+1}\to\bmod_{0,n+m},\]
are obtained by gluing curves (possibly with nodal singularities) along marked points.

The genus zero surface operad $\mathcal{M}$ maps to the operad $\{\bmod_{0,\bullet+1}\}$. By a theorem of Drummond-Cole, the latter can be seen as an operadic quotient of the former by homotopically killing the circle in arity $1$. Our second main theorem (Theorem \ref{thm: action on bmod}) is the following:

\begin{thm*}
The action of $\gt$ on the profinite completion of the genus zero surface operad extends to an action of $\gt$ on the profinite completion of $\bmod_{0,\bullet +1}$. Moreover this action is non-trivial. 
\end{thm*}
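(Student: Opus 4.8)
The plan is to deduce both assertions from Drummond-Cole's presentation of $\bmod_{0,\bullet+1}$ as the operad obtained from $\mathcal{M}$ by homotopically trivializing the circle in arity one, together with the identification $\gt\cong\mathrm{Aut}^h(\widehat{\mathcal{M}})$ of Theorem~\ref{mainthm}; the extension of the action is essentially formal, while the non-triviality is the real content. For the extension, I would first note that trivializing the arity-one circle is a \emph{functor}. Viewing the circle $S^1$ as a one-coloured operad concentrated in arity one via its group structure, and writing $\ast$ for the trivial operad, the quotient is the homotopy pushout of operads $\ast\leftarrow S^1\to\mathcal{M}$, the right-hand map being the inclusion of the arity-one space. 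A homotopy automorphism $\phi$ of $\widehat{\mathcal{M}}$ restricts to a self-equivalence of the arity-one space $\widehat{S^1}$, so $(\mathrm{id}_\ast,\phi|_1,\phi)$ is a map of spans and functoriality of the homotopy pushout supplies an induced homotopy automorphism of the quotient. Applying this to the $\gt$-action of Theorem~\ref{mainthm}, and using Drummond-Cole to identify the completed quotient with $\widehat{\bmod}_{0,\bullet+1}$, yields the desired action and makes the operad map $\widehat{\mathcal{M}}\to\widehat{\bmod}_{0,\bullet+1}$ equivariant.

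For non-triviality I would localize at the first non-trivial arity. After trivializing the circle the arity-one and arity-two spaces become contractible (as $\bmod_{0,2}$ and $\bmod_{0,3}$ are points), and the first interesting space is the arity-three space $\bmod_{0,4}\cong\mathbb{P}^1$, whose profinite completion is $\widehat{S^2}$. A homotopy self-equivalence of $\widehat{S^2}$ is determined up to homotopy by its degree, that is by its action on $\pi_2(\widehat{S^2})\cong\widehat{\mathbb{Z}}$, so it is enough to find one element of $\gt$ acting non-trivially there. I claim that $(\lambda,f)\in\gt$ acts through multiplication by its cyclotomic component $\lambda$: tracking the rotation generator of the arity-one circle $\widehat{S^1}$ through the operadic quotient produces a weight-one generator of $H_2(\bmod_{0,4};\widehat{\mathbb{Z}})$, and by the construction of the action in Theorem~\ref{mainthm} the group $\gt$ acts on $\pi_1(\widehat{S^1})\cong\widehat{\mathbb{Z}}$ through $\lambda$. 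As $\gt$ contains elements with $\lambda\neq 1$, the action on $\widehat{\bmod}_{0,\bullet+1}$ is non-trivial. As a consistency check, this matches the Galois action on $H^2_{et}(\mathbb{P}^1_{\bar{\mathbb{Q}}})\cong\widehat{\mathbb{Z}}(-1)$ under $\mathrm{Gal}(\bar{\mathbb{Q}}/\mathbb{Q})\to\gt$.

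The main obstacle lies not in the descent, which is formal once functoriality is in place, but in two compatibility points in the profinite world. First, completion must commute with Drummond-Cole's homotopy quotient, so that $\widehat{\mathcal{M}}/\!/S^1$ really computes $\widehat{\bmod}_{0,\bullet+1}$; since completion does not preserve arbitrary homotopy colimits, I would check this on the explicit bar-construction model for the quotient, using that the spaces involved are nilpotent of finite type. Second, and most delicate, is the arity-three identification: one must follow the arity-one generator through the operadic pushout and confirm that the resulting Tate class in $H_2(\bmod_{0,4})$ is acted on by the cyclotomic character. I expect this last piece of bookkeeping, rather than any conceptual difficulty, to be the crux of the proof.
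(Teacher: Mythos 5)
Your first half (the extension of the action) is essentially the paper's own proof of Theorem \ref{thm: action on bmod}: Drummond-Cole's pushout $\ast\leftarrow S^1\to\mathcal{M}$, the observation that a homotopy automorphism of $\widehat{\mathcal{M}}$ restricts to the arity-one part, and functoriality of the pushout of $\infty$-operads. The compatibility you flag (that levelwise profinite completion carries this pushout to a pushout of $\infty$-operads in profinite spaces) is also the point the paper must address, but it is resolved there not by a bar-construction computation: since each levelwise completion is again an $\infty$-operad (Proposition \ref{prop:Nisop}, which rests on \emph{goodness} of the groups involved, via Proposition \ref{prop:prodcompl}), the completed square has the correct universal property in the $\infty$-category of $\infty$-operads in profinite spaces, because completion is levelwise a left adjoint. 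Note that your proposed hypothesis ``nilpotent of finite type'' is the wrong one in this profinite setting --- the spaces $\mathcal{M}(n)\simeq B\prb(n)$ are not nilpotent; what makes everything work is Serre's goodness of the pure ribbon braid groups.

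The non-triviality half has a genuine gap, and it sits exactly where you say the crux is. Your argument reduces everything to the claim that $g=(\lambda,f)$ acts on $H_2$ of $(\bmod_{0,4})^{\wedge}\simeq\widehat{S^2}$ by $\lambda$, justified by ``tracking the rotation generator of the arity-one circle through the operadic quotient.'' But the operadic pushout is not computed arity-wise, so there is no arity-three pushout square in which to track anything; a mechanism is needed to resolve the quotient before one can speak of where the Tate class comes from. The paper's mechanism is the simplicial bar resolution $[s]\mapsto \mathcal{M}\sqcup(S^1)^{\sqcup s}\sqcup\ast$ (coproducts of operads), evaluated at the corolla, fed into the continuous chains functor $D_*(-,\mathbb{Q}_p)$ of Construction \ref{cons: functor D}; this yields a $\gt$-equivariant spectral sequence $E^1_{s,t}=H_t((\ast\sqcup(S^1)^{\sqcup s}\sqcup\mathcal{M})(n),\mathbb{Q}_p)\Rightarrow H_{s+t}(\bmod_{0,n+1},\mathbb{Q}_p)$ on whose $E^1$-page $g$ acts by $\chi_p(g)^t$ (K\"unneth plus Proposition \ref{prop: action on H}). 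Equivariance forces collapse at $E^2$, and --- this is the step your sketch has no counterpart for --- a comparison with the same bar construction for the commutative operad $\mathsf{Com}$ shows that $E^2_{s,0}=0$ for $s>0$. Without that comparison you cannot exclude the possibility that the class in $H_2(\bmod_{0,4},\mathbb{Q}_p)$ is carried by the weight-zero line $E^{\infty}_{2,0}$, on which $\gt$ acts trivially, so the assertion that it is a ``weight-one'' class remains unproven. (For the record, non-triviality only needs the weight to be positive, not equal to one; but positivity is precisely what the $\mathsf{Com}$ comparison buys, and nothing in your outline substitutes for it.)
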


We point out that the complex analytic spaces underlying the schemes $\bmod_{0,n}$ are simply connected, and so the geometric fundamental groups of these schemes are trivial. This deviates from most of the literature, where $\gt$ actions are usually constructed on schemes whose associated complex analytic spaces are $K(\pi,1)$'s. 

Finally, there is also a standard action of the absolute Galois group of $\mathbb{Q}$ on the profinite completion of the operad $\bmod_{0,\bullet +1}$, coming from the fact that this operad can be obtained as the geometric \'etale homotopy type of an operad in $\mathbb{Q}$-schemes. It seems plausible that the $\gt$ action that we construct coincides with the action of the absolute Galois group of $\mathbb{Q}$ restricted along the injection $\mathrm{Gal}(\bar{\mathbb{Q}}/\mathbb{Q})\to \gt$.

\subsection*{Acknowledgements} We are grateful to the Hausdorff Institute of Mathematics for the excellent working conditions during the Junior Trimester in Topology. P.B. was supported by FCT through grant SFRH/BPD/99841/2014. We would also like to thank Benjamin Collas, Philip Hackney and Craig Westerland for helpful mathematical discussions and Rosona Eldred and David Gepner for comments on earlier drafts of this paper.

\section{Homotopical recollections}

This background section serves as a brief overview of the homotopical constructions that we use in the 
paper. Throughout we use the language of Quillen model categories, and take \cite{hirschhorn} as our standard reference. We also make a mild but essential use of the vantage point of $\infty$-categories (via relative categories, recalled below). Throughout we use the term \emph{space} to mean simplicial set. 

\subsection{Relative categories and derived mapping spaces}

A \emph{relative category} is a pair $(\mathbf{C},\mathbf{W})$ where $\mathbf{C}$  is a category and $\mathbf{W}$ is a wide subcategory of $\mathbf{C}$ whose arrows we will call \emph{weak equivalences} in $\mathbf{C}$. A \emph{relative functor} $F:(\mathbf{C},\mathbf{W}) \to (\mathbf{D},\mathbf{W}^\prime)$ is a functor $F:\mathbf{C}\to \mathbf{D}$ such that $F(\mathbf{W}) \subset \mathbf{W}^\prime$. The \emph{homotopy category} of $\mathbf{C}$, denoted $\ho\mathbf{C}$, is the category obtained from $\mathbf{C}$ by formally inverting the maps in $\mathbf{W}$. 

The homotopy category of $\textbf{C}$ does not capture all of the higher order homotopical information contained in the relative category. As a homotopical enhancement for $\ho\mathbf{C}$, Dwyer-Kan \cite{DK2} constructed a simplicial category $L (\mathbf{C},\mathbf{W})$, together with a natural embedding $\mathbf{C} \to L(\mathbf{C},\mathbf{W})$, with the property that the category of components of $L (\mathbf{C},\mathbf{W})$ (i.e. the category obtained by applying $\pi_0$ to the morphism spaces of $L(\mathbf{C},\mathbf{W})$) agrees with $\ho \mathbf{C}$.

The simplicial category $L(\mathbf{C},\mathbf{W})$ has the same objects as $\mathbf{C}$ and for any two objects $X,Y$ in $\mathbf{C}$ a space of maps $\Rmap_\mathbf{C}(X,Y)$.  We will write $\Rmap(X,Y)$ if $\mathbf{C}$ is understood. One of the important features of $L(\mathbf{C},\mathbf{W})$ is that its morphism spaces are homotopically meaningful; that is, for any $Y$ in $\mathbf{C}$ and weak equivalence $X \to X^\prime$, the induced maps 
\[\Rmap(X^\prime, Y) \to \Rmap(X,Y)\]
and 
\[\Rmap(Y, X) \to \Rmap(Y,X^\prime)\]
are weak equivalences of spaces. Calculating $\Rmap$ for an arbitrary relative category is often not a feasible task. The situation simplifies if the relative category comes equipped with extra structure. If $\mathbf{C}$ has the extra structure of a simplicial model category, then we have the following.

\begin{thm}[{\cite[Corollary 4.7]{DK3}}]\label{rigidify}~
Let $\mathbf{C}$ be a simplicial model category, and let $X$ and $Y$ be objects in $\mathbf{C}$ such that $X$ is cofibrant and $Y$ is fibrant. Denote by $\map_{\mathbf{C}}(X,Y)$ the space of maps in $\mathbf{C}$ coming from the simplicial structure. Then $\Rmap_\mathbf{C}(X,Y)$ is related to $\map_{\mathbf{C}}(X,Y)$ by a natural zigzag of weak equivalences. 
\end{thm}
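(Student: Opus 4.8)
The plan is to reduce the general statement about $\Rmap$ to the simplicial mapping space $\map$ by factoring the comparison through the hammock localization and using the known compatibility between the Dwyer–Kan localization and the simplicial structure on a simplicial model category. The core idea is that for a simplicial model category, the simplicial enrichment already provides a candidate for the derived mapping space, and one needs to show that this candidate agrees (up to a natural zigzag) with the intrinsically defined $\Rmap_\mathbf{C}(X,Y)$ whenever $X$ is cofibrant and $Y$ fibrant. Since the statement is attributed to \cite[Corollary 4.7]{DK3}, the honest approach is to recall the argument of Dwyer and Kan rather than reprove it from scratch; I would present the skeleton of their proof.

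\emph{First} I would recall that for any relative category $(\mathbf{C},\mathbf{W})$, Dwyer and Kan construct the hammock localization $L^H(\mathbf{C},\mathbf{W})$, a simplicial category whose mapping spaces are given by nerves of categories of hammocks (zigzags of maps, with backward maps in $\mathbf{W}$, modulo a reduction relation). This is weakly equivalent to the standard localization $L(\mathbf{C},\mathbf{W})$, so it suffices to compare $\map_{\mathbf{C}}(X,Y)$ with the hammock mapping space $L^H(\mathbf{C},\mathbf{W})(X,Y)$.

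\emph{Next}, I would use the fact that in a simplicial model category with $X$ cofibrant and $Y$ fibrant, the simplicial mapping space $\map_{\mathbf{C}}(X,Y)$ is homotopy invariant: it sends weak equivalences between cofibrant-fibrant objects to weak equivalences of spaces, and moreover the two-sided simplicial bar-type resolutions (cosimplicial/simplicial frames coming from the simplicial structure) compute the same homotopy type. The key input is that a cofibrant-fibrant object in a simplicial model category is its own cosimplicial and simplicial resolution up to weak equivalence, by \textbf{SM7} (the pushout-product/pullback-power axiom). This lets one build an explicit map of simplicial sets from $\map_{\mathbf{C}}(X,Y)$ into the hammock complex by sending a simplicial $n$-simplex to the corresponding hammock, and to check it is a weak equivalence.

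\emph{The main obstacle} will be establishing that this comparison map is a weak equivalence, i.e. that every hammock between $X$ and $Y$ can be reduced, up to the hammock equivalence relation and up to homotopy, to a single forward map realized simplicially, without losing or gaining homotopical information. Concretely, one must show that the backward weak equivalences appearing in a hammock can be inverted coherently using functorial fibrant-cofibrant replacement together with the simplicial structure, so that the resulting forward map lands in $\map_{\mathbf{C}}(X,Y)$, and that this assignment is homotopically well-defined. This is precisely the content that Dwyer and Kan verify via their calculus of fractions and the theory of (co)simplicial resolutions; I would cite their verification and indicate that the cofibrancy of $X$ and fibrancy of $Y$ are exactly what make the resolutions collapse, so the zigzag of weak equivalences between $\Rmap_\mathbf{C}(X,Y)$ and $\map_{\mathbf{C}}(X,Y)$ is natural in both variables.
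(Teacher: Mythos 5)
The paper gives no independent proof of this statement: it is quoted directly as \cite[Corollary 4.7]{DK3}, so the ``paper's proof'' is precisely the Dwyer--Kan argument you outline. Your sketch accurately reproduces that argument's skeleton (hammock localization agreeing with the simplicial localization, the homotopy calculus of fractions, and the fact that for $X$ cofibrant and $Y$ fibrant the (co)simplicial resolutions furnished by the simplicial structure via \textbf{SM7} collapse to $\map_{\mathbf{C}}(X,Y)$), so your proposal is correct and takes essentially the same route as the paper.
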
  

\subsection{Adjunctions} A simplicial Quillen pair $(L,R)$ between simplicial model categories $\mathbf{C}$ and $\mathbf{D}$ gives rise to a homotopy adjunction 
\[
\mathbb{L}L : L(\mathbf{C},\mathbf{W}) \leftrightarrows L(\mathbf{D}, \mathbf{W}^\prime) : \mathbb{R}R
\]
of simplicially enriched categories in the sense that there is a weak equivalence of spaces
\[
\Rmap(\mathbb{L} L(X), Y) \simeq \Rmap(X, \mathbb{R} R(Y))
\]
natural in $X \in \mathbf{C}$ and $Y \in \mathbf{D}$, where $\mathbb{L}$ and $\mathbb{R}$ denote, respectively, the left and right derived functor constructions. That is to say, given a cofibrant replacement $X_c \xrightarrow{\sim} X$ of $X$ and a fibrant replacement $Y \xrightarrow{\sim} Y_f$, we obtain a weak equivalence
\[
\map_\mathbf{D}(L(X_c), Y_f) \simeq \map_{\mathbf{C}}(X_c, R(Y_f))
\]
of non-derived mapping spaces.

\subsection{Spaces and groupoids} 
We write $\S$ for the category of simplicial sets with its usual Kan-Quillen simplicial model structure. We write $\G$ for the category of groupoids with a model structure in which weak equivalences are equivalences of categories, cofibrations are morphisms that are injective on objects, and fibrations are isofibrations. Isofibrations are those functors which have the right-lifting property against the map $[0] \to E$ where $[0]$ denotes the trivial category with a single object $0$ and $E$ denotes the groupoid with two objects $0$ and $1$ and exactly two non-identity morphisms $0 \to 1$ and $1 \to 0$. In particular, every object in $\G$ is both fibrant and cofibrant. For details, see \cite[Section 5]{Anderson}.

The relationship between $\S$ and $\G$ is classical: the classifying space functor $$B : \G \to \S$$ has a left adjoint $\pi$ which assigns to a space $X$ its fundamental groupoid $\pi X$.  The model category structure on $\G$ is simplicial with mapping space given by $\map(C,D) := \map(BC,BD)$ for every pair of groupoids $C$ and $D$. The pair $(\pi,B)$ then forms a simplicial Quillen pair. The classifying space functor preserves and reflects all weak equivalences and fibrations and is homotopically fully faithful in the sense that the natural map
\[
\Rmap(C,D) \to \Rmap(BC, BD) 
\]
is a weak equivalence for every pair of groupoids $C$ and $D$.

\subsection{Operads}
 A \emph{symmetric sequence} in spaces is a sequence of spaces $\{ \mathsf{P}(n) \}_{n \geq 0}$, in which each space $\mathsf{P}(n)$ is equipped with an action of the symmetric group $\Sigma_n$. An \emph{operad} in spaces $\mathsf{P}$ is a symmetric sequence $\{ \mathsf{P}(n) \}_{n \geq 0}$ together with composition maps
\[
\circ_i : \mathsf{P}(n) \times \mathsf{P}(m) \to \mathsf{P}(n + m - 1)
\]
for $i \in \{1, \dots, m\}$ which are compatible with the symmetric group actions and subject to associativity and unit axioms. A map of operads is a map of symmetric sequences which preserves the operadic structure. Substitution of the word \emph{space} for the word \emph{groupoid} gives us the notion of an operad in groupoids. 

We denote by $\Op(\S)$ and $\Op(\G)$ the category of operads in $\S$ and $\G$, respectively. These categories are equipped with simplicial model category structures in which weak equivalences and fibrations are defined levelwise. More explicitly, a map of operads $f:\mathsf{P}\rightarrow\mathsf{Q}$ is a weak equivalence (respectively, fibration) if $f(n):\mathsf{P}(n)\rightarrow \mathsf{Q}(n)$ is a weak equivalence (respectively, fibration) for each non-negative integer $n$. For more details, consult \cite[Theorem 3.1]{bm03}.

Both the classifying space functor $B$ and the fundamental groupoid functor $\pi$ preserve products, and so induce an adjunction
\[
\pi : \Op(\S) \leftrightarrows \Op(\G) : B
\]
by levelwise application. It follows from \cite[Theorem 4.7]{bm07} that this is a simplicial Quillen adjunction. Moreover, $B$ is homotopically fully faithful.

The functor which to a groupoid $G$ associates its set of objects $\ob(G)$ is product preserving, and hence induces a functor $\ob$ from the category of operads in groupoids to the category of operads in sets.

An operad $\mathsf{P}$ is fibrant if each object $\mathsf{P}(n)$ is fibrant. In practice, it is more difficult to tell if an operad is cofibrant. However, for operads in groupoids we have the following useful criterion.  

\begin{prop}\cite[Proposition 6.8]{Horel1}\label{prop: free op gp}
The cofibrations in $\Op(\mathbf{G})$ are morphisms of operads in groupoids $f:\mathsf{P}\rightarrow\mathsf{Q}$ with the property that $\ob(f)$ has the left lifting property with respect to operad maps which are levelwise surjective. In particular, any operad $\mathsf{P}$ in groupoids with the property that $\ob(\mathsf{P})$ is free as an operad in $\Set$ is cofibrant in $\Op(\mathbf{G})$. 
\end{prop}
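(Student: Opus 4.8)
The plan is to characterize the cofibrations by a direct lifting analysis using the model structure on $\mathbf{G}$ recalled above, and then to feed in the adjunction $\ob \dashv E$, where $E\colon \Set\to\mathbf{G}$ sends a set to the indiscrete groupoid on it (so that the groupoid $E$ of the excerpt is $E$ applied to the two-point set, and $E$ preserves products, hence acts levelwise on operads). The first step is to identify the acyclic fibrations. Since weak equivalences and fibrations in $\Op(\mathbf{G})$ are levelwise, a map $g\colon\mathsf{X}\to\mathsf{Y}$ is an acyclic fibration precisely when each $g(n)$ is simultaneously an isofibration and an equivalence of categories. For groupoids this means exactly that each $g(n)$ is surjective on objects and fully faithful: an isofibration which is essentially surjective is surjective on objects, and conversely a fully faithful functor which is surjective on objects is an equivalence and lifts isomorphisms. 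Thus acyclic fibrations in $\Op(\mathbf{G})$ are the maps that are levelwise surjective on objects and levelwise fully faithful, and by definition $f$ is a cofibration iff it has the left lifting property against all such maps.

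For the implication ``$\ob(f)$ has the lifting property $\Rightarrow$ $f$ is a cofibration'', consider a lifting problem for $f$ against an acyclic fibration $g\colon\mathsf{X}\to\mathsf{Y}$, with top map $u\colon\mathsf{P}\to\mathsf{X}$ and bottom map $v\colon\mathsf{Q}\to\mathsf{Y}$. Applying $\ob$ gives a lifting problem for $\ob(f)$ against $\ob(g)$, and $\ob(g)$ is a levelwise surjection of operads in sets because $\ob$ preserves the operadic structure and $g$ is levelwise surjective on objects. The hypothesis then produces an operad map $\ell_0\colon\ob(\mathsf{Q})\to\ob(\mathsf{X})$ lifting the object-level square. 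It remains to promote $\ell_0$ to a map of operads in groupoids $\ell\colon\mathsf{Q}\to\mathsf{X}$, and this is where full faithfulness of $g$ does the work: for a morphism $\phi$ of $\mathsf{Q}(n)$ the bijection $\Hom_{\mathsf{X}(n)}(\ell_0 a,\ell_0 b)\xrightarrow{\sim}\Hom_{\mathsf{Y}(n)}(v a, v b)$ induced by $g$ lets us define $\ell(\phi)$ to be the unique morphism with $g\ell(\phi)=v(\phi)$. Uniqueness then forces $\ell$ to preserve identities and composition, to be compatible with the symmetric actions and the operadic composition maps (both sides of each required identity lie over the same morphism of $\mathsf{Y}$ between the same objects, since $\ell_0$ and $v$ are operad maps), and to satisfy $\ell f=u$ and $g\ell=v$. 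Hence the lift exists and $f$ is a cofibration. I expect this promotion step --- checking that the purely object-theoretic lift extends to a morphism-level operad map --- to be the main point, though once phrased via the uniqueness coming from full faithfulness it is routine.

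For the converse, assume $f$ is a cofibration and let $p\colon A\to B$ be a levelwise surjection of operads in sets. The functor $E$ carries $p$ to a map $E(p)$ of operads in groupoids which is levelwise surjective on objects (as $p$ is surjective) and levelwise fully faithful (any functor between indiscrete groupoids is fully faithful, all relevant hom-sets being singletons); thus $E(p)$ is an acyclic fibration. A lifting problem for $\ob(f)$ against $p$ corresponds, under the adjunction $\ob\dashv E$, to a lifting problem for $f$ against $E(p)$, which admits a solution because $f$ is a cofibration; transposing the solution back yields the desired lift. This establishes the claimed characterization of the cofibrations.

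Finally, for the ``in particular'' statement, recall that $\mathsf{P}$ is cofibrant exactly when the map $I\to\mathsf{P}$ from the initial operad is a cofibration, and that $\ob$, being a left adjoint, sends $I$ to the initial operad in sets. By the characterization just proved it suffices to check that $\ob(I)\to\ob(\mathsf{P})$ has the left lifting property against levelwise surjections. When $\ob(\mathsf{P})$ is free, say $\ob(\mathsf{P})=F(C)$ for a collection of sets $C$ with $\ob(I)=F(\varnothing)$, a lifting problem against a levelwise surjection $p$ transposes, along the free--forgetful adjunction for operads in sets, into the problem of lifting a map of collections $C\to B$ through the levelwise surjection $p$; such a lift exists by choosing preimages levelwise and then extending freely. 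Therefore $I\to\mathsf{P}$ is a cofibration and $\mathsf{P}$ is cofibrant.
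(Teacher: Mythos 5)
Your proof is correct, and it takes essentially the same route as the proof in the source the paper cites for this statement (\cite[Proposition 6.8]{Horel1}; the paper itself gives no proof): one identifies the acyclic fibrations of $\Op(\G)$ as the maps that are levelwise fully faithful and surjective on objects, and then transposes lifting problems across the adjunction between $\ob$ and the indiscrete-groupoid functor. One caveat: in your final paragraph ``free'' must be read, as you implicitly do, as free on a collection of generating \emph{sets} carrying no symmetric-group actions---the sense in which the paper uses it (e.g.\ the magma operad)---because your levelwise choice of preimages cannot in general be made equivariant; for an operad that is ``free'' on a symmetric sequence with non-free actions (say, one arity-$2$ generator with trivial $\Sigma_2$-action) the cofibrancy claim is actually false.
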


\section{Profinite Completion} 

Given any small category $\mathbf{C}$, the associated \emph{category of pro-objects} in $\mathbf{C}$ (alias pro-category of $\mathbf{C}$), $\Pro(\mathbf{C})$, is obtained by freely adding all cofiltered limits to $\mathbf{C}$. Formally, the opposite of $\Pro(\mathbf{C})$ is the full subcategory of the category of functors from $\mathbf{C}$ to $\Set$ spanned by those which are filtered colimits of representables. If $\mathbf{C}$ has finite limits, then the opposite of $\Pro(\mathbf{C})$ is also equivalent to the category of finite limit preserving functors from $\mathbf{C}$ to $\Set$. 

Alternatively, $\Pro(\mathbf{C})$ is the category whose objects are pairs $(I,X)$ where $I$ is a cofiltered category and $X=\{X_i\}_{i\in I}$ is a diagram $I\to \mathbf{C}$. Morphisms are defined as 
\[\Hom_{\Pro(\mathbf{C})}(\{X\}_{i}, \{Y\}_{j}) = \lim_{j\in J}\colim_{i\in I}\Hom_{\mathbf{C}}(X_i,Y_j) \; . \]
Clearly, $\mathbf{C}$ embeds fully faithfully in $\Pro(\mathbf{C})$. 

\begin{example}\label{expl:pro fin}
Let $\mathrm{Fin}$ be the category of finite sets. The category of profinite sets $\widehat{\Set}:=\Pro(\mathrm{Fin})$ is the associated pro-category. The category of profinite sets is equivalent to the category of compact, totally disconnected Hausdorff spaces and continuous maps. There is an adjunction
\[
\widehat{(-)}: \Set \leftrightarrows \widehat{\Set} : |-|
\]
where the right adjoint sends a diagram to its limit in $\Set$. The left adjoint sends a set $X$ to the diagram $R \mapsto X/R$ where $R$ runs over all equivalence relations on $X$ with finitely many equivalence classes. Another description of $\widehat{X}$ is as the finite limit preserving functor $\mathrm{Fin} \to \Set$ which sends a finite set $F$ to $\Hom(X,F)$.
\end{example}

\begin{example} 
The category of profinite groups $\widehat{\Grp}$ is the category of pro-objects in the category of finite groups. This category is equivalent to the category of group objects in $\widehat{\Set}$ (see, e.g., \cite[p. 237]{profinite_ref}). In other words, the category of profinite groups is the category of topological groups whose underlying topological space is a totally disconnected, compact Hausdorff space. There exists an adjunction 
\[
\widehat{(-)}:  \Grp \leftrightarrows \widehat{\Grp}:|-| \]
where the right adjoint sends a profinite group to the underlying discrete group. The left adjoint $\widehat{(-)}$ is called \emph{profinite completion}. It sends a group $G$ to the inverse limit of the diagram
\[N \to G/N\]
where $N$ runs over normal subgroups of $G$ with finite index and $G/N$ is given the discrete topology. 
\end{example} 

\begin{definition} 
We say that a groupoid $A$ is \emph{finite} if it has finitely many morphisms (and so also finitely many objects). The category of all finite groupoids will be denoted $f\G$. The associated pro-category is called the category of \emph{profinite groupoids}. It will be denoted by $\widehat{\G}:=\Pro(f\G)$.
\end{definition}

\begin{definition} Let $A$ be a profinite groupoid. Let $S$ be any finite set and $G$ be any finite group, then we define:
\begin{itemize} 
\item $H^0(A,S):= \Hom_{\widehat{\G}}(A,S)$. 
\item $Z^{1}(A,G):= \Hom_{\widehat{\G}}(A, *//G)$, where $*//G$ denotes the groupoid with a unique object whose group of automorphisms is the group $G$. 
\item $B^{1}(A,G):= \Hom_{\widehat{\G}}(A,G)$ where $G$ denotes the group $G$ seen as a discrete groupoid (i.e. with only identity morphisms).
\item $H^1(A,G):=Z^1(A,G)/B^1(A,G)$ where the quotient is taken with respect to a certain right action of the group $B^1(A,G)$ on $Z^1(A,G)$ (see \cite[Definition 4.1.]{Horel1} for more details).
\end{itemize} 
\end{definition}

The following two results are proved in \cite{Horel1}.

\begin{thm} The category $\widehat{\G}$ admits a left proper, cocombinatorial model structure in which a map $A\rightarrow B$ is a weak equivalence in $\widehat{\G}$ if 
\begin{enumerate}
\item for all finite sets $S$, $H^0(B,S)\to H^0(A,S)$ is an isomorphism and
\item for all finite groups $G$, $H^1(B,G)\to H^{1}(A,G)$ is an isomorphism. 
\end{enumerate}
The cofibrations are the maps which are monomorphisms on objects.
\end{thm}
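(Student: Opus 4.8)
The plan is to realize this as a \emph{cocombinatorial} model structure by applying the dual of Jeff Smith's recognition theorem, with the weak equivalences $\mathbf{W}$ described cohomologically and the monomorphisms-on-objects as the cofibrations. The structural input that makes this available is that $\widehat{\G}=\Pro(f\G)$ is co-locally presentable: since $f\G$ is essentially small, the opposite category $\widehat{\G}^{op}\simeq \mathrm{Ind}((f\G)^{op})$ is locally finitely presentable. Dually, $\widehat{\G}$ admits all cofiltered limits and every object is a cofiltered limit of finite groupoids, so the \emph{cosmall object argument} is available to produce functorial factorizations once we exhibit a set of generating cofibrations and generating trivial cofibrations among maps of finite groupoids. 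As a parallel sanity check, one may view profinite groupoids as the $1$-truncated part of Quick's profinite spaces $\widehat{\S}$ via the classifying space functor; the weak equivalences above are precisely those detected by $\pi_0$ and $\pi_1$ with finite coefficients, which is the $1$-type shadow of Quick's cohomological weak equivalences.

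First I would treat the weak equivalences. Closure under retracts and the two-out-of-three property are formal consequences of the functoriality of $H^0(-,S)$, $Z^1(-,G)$ and $B^1(-,G)$. The essential point is \emph{accessibility} of $\mathbf{W}$: each of $H^0(-,S)=\Hom_{\widehat{\G}}(-,S)$, $Z^1(-,G)=\Hom_{\widehat{\G}}(-,*//G)$ and $B^1(-,G)=\Hom_{\widehat{\G}}(-,G)$ is corepresented by a \emph{finite} groupoid, and being a weak equivalence is the condition that $f^*$ be a bijection on $\Hom(-,S)$ and induce an isomorphism on the quotient $H^1=Z^1/B^1$, for all finite $S$ and $G$. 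Because the corepresenting objects form a set and are small in $\widehat{\G}^{op}$, membership in $\mathbf{W}$ is expressed as orthogonality against a set of maps, so $\mathbf{W}$ is an accessible subcategory of the arrow category. Here one must check that these finite-coefficient invariants commute with the cofiltered limits defining pro-objects, so that the cohomological conditions genuinely test the pro-homotopy type.

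Next I would fix a set of generating cofibrations (maps of finite groupoids that are injective on objects) and a set of generating trivial cofibrations, and run the cosmall object argument to factor every map. The heart of the recognition theorem is the matching condition: a map with the right lifting property against all monomorphisms-on-objects lying in $\mathbf{W}$ must itself be a weak equivalence, and conversely. Verifying this amounts to an explicit cocycle manipulation identifying such fibrations with the maps inducing isomorphisms on $H^0$ and $H^1$; this is where the concrete description of $Z^1$, $B^1$ and the action defining $H^1$ from \cite{Horel1} is used, and where the comparison with profinite spaces can be invoked to import the needed computations.

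\textbf{The main obstacle} I expect is left properness, together with the pro-categorical bookkeeping behind accessibility. Left properness requires that the pushout of a weak equivalence along a cofibration be a weak equivalence. Since cofibrations are monomorphisms on objects, the relevant pushouts of (pro-)groupoids are computed by an explicit amalgamation, and one must run a \emph{nonabelian} Mayer--Vietoris argument controlling $H^1(-,G)$ for finite $G$ under this amalgamation, and then show that the resulting exact sequences remain exact after passing to the cofiltered limits presenting the profinite groupoids involved. Managing nonabelian $H^1$ under gluing, and ensuring compatibility of all the finite-coefficient invariants with cofiltered limits, are the two places where the argument requires genuine care rather than formal manipulation.
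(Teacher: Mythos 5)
The paper itself contains no proof of this theorem: it is quoted from \cite{Horel1}, where it is established by exhibiting $\widehat{\G}$ as a \emph{fibrantly generated} model category (this is what cocombinatorial amounts to here) via a dual recognition theorem. So your high-level skeleton --- dualize a recognition theorem over $\widehat{\G}^{op}\simeq\mathrm{Ind}((f\G)^{op})$, which is locally finitely presentable, and exploit the fact that the invariants are controlled by finite groupoids --- is the right one, but two of your steps are off in ways that matter. First, the duality in your factorization step is garbled: you propose to fix ``a set of generating cofibrations and generating trivial cofibrations'' and run the cosmall object argument on them. In $\Pro(f\G)$ the finite groupoids are cosmall, not small, so no small object argument is available, and the cosmall object argument runs on generating \emph{fibrations}, not cofibrations; indeed the class of monomorphisms-on-objects is not generated by any set in this sense. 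What the proof must do is exhibit sets $P$ and $Q$ of generating fibrations and generating trivial fibrations (maps between finite groupoids), obtain both factorizations from the cosmall object argument applied to these, and then prove that the maps with the left lifting property against $Q$ are exactly the monomorphisms on objects and that the maps with the left lifting property against $P$ are weak equivalences. Your ``matching condition'' states this backwards: maps with the right lifting property against the trivial cofibrations are the fibrations, and those are certainly not all weak equivalences. Relatedly, membership in $\mathbf{W}$ is not an orthogonality condition against a set of maps, because $H^1(-,G)$ is a quotient of the corepresentable functor $Z^1(-,G)$ by the action of $B^1(-,G)$ and is therefore not itself corepresentable; accessibility of $\mathbf{W}$ does hold, but it must be deduced from the fact that $H^0$, $Z^1$ and $B^1$ carry the defining cofiltered limits to filtered colimits of sets.

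Second, what you flag as ``the main obstacle'' is not an obstacle at all. Left properness needs no nonabelian Mayer--Vietoris argument and no exactness through cofiltered limits: the initial object of $\widehat{\G}$ is the empty groupoid, every map out of it is vacuously a monomorphism on objects, hence \emph{every} object is cofibrant, and a model category in which every object is cofibrant is automatically left proper \cite[Corollary 13.1.3]{hirschhorn}. The genuine work in the proof you are sketching lies precisely where you are briefest: choosing $P$ and $Q$ correctly and carrying out the cocycle-level verifications that left lifting against $Q$ is precisely injectivity on objects and that left lifting against $P$ lands in $\mathbf{W}$.
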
 

As with groups, there exists an adjunction 
\begin{equation}\label{adjunction_profinite_group}
\widehat{(-)}:  \G \leftrightarrows \widehat{\G}:|-| 
\end{equation}
in which the right adjoint sends a profinite groupoid, seen as diagram of groupoids, to its limit in $\G$. The left adjoint $\widehat{(-)}:  \G\to\widehat{\G}$ is called \emph{profinite completion}.

\begin{prop}\cite[Proposition 4.22]{Horel1} The profinite completion functor $\widehat{(-)}:  \G\to \widehat{\G}$ is a left Quillen functor. 
\end{prop}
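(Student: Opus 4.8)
The plan is to show that profinite completion $\widehat{(-)}\colon \G \to \widehat{\G}$ is a left Quillen functor by verifying that it preserves cofibrations and acyclic cofibrations, or equivalently (since we have an adjunction~\eqref{adjunction_profinite_group}) that it preserves cofibrations while its right adjoint $|-|$ preserves fibrations and acyclic fibrations. Because the model structure on $\widehat{\G}$ is cocombinatorial, the most economical route is to check that $\widehat{(-)}$ preserves cofibrations and that it takes the generating acyclic cofibrations (or at least all acyclic cofibrations) of $\G$ to weak equivalences in $\widehat{\G}$; left properness and the explicit generating sets should make this tractable.

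First I would recall that the cofibrations in $\G$ are exactly the functors that are injective on objects, while the cofibrations in $\widehat{\G}$ are the maps which are monomorphisms on objects. Since $\widehat{(-)}$ is a left adjoint it preserves colimits, and the key point is to understand its effect on objects. I would argue that profinite completion does not change the object set in a way that destroys injectivity: a monomorphism of groupoids that is injective on objects completes to a map that is still a monomorphism on the profinite object sets, using that the object-set functor is compatible with completion and that profinite completion of an injection of sets remains injective (the unit $X \to |\widehat{X}|$ is injective for the relevant sets, as in Example~\ref{expl:pro fin}). This establishes preservation of cofibrations.

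For the acyclic part I would pass to the right adjoint and instead verify that $|-|\colon \widehat{\G}\to\G$ is a right Quillen functor, i.e. that it preserves fibrations and acyclic fibrations; by the adjunction this is equivalent to $\widehat{(-)}$ being left Quillen. The acyclic fibrations in $\G$ are the isofibrations that are equivalences of categories, and the fibrations in $\widehat{\G}$ are governed by the cohomological weak equivalences described via $H^0(-,S)$ and $H^1(-,G)$. The cleanest approach is to use the characterisation of weak equivalences in $\widehat{\G}$ through the finite invariants $H^0(A,S)=\Hom_{\widehat\G}(A,S)$ and $H^1(A,G)$: since $S$ is a finite set and $*//G$, $G$ are finite groupoids, the adjunction isomorphism $\Hom_{\widehat{\G}}(\widehat{C},-)\cong\Hom_{\G}(C,|-|)$ identifies these invariants of $\widehat{C}$ with the corresponding invariants of $C$ computed against finite targets. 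Thus a map $C\to D$ in $\G$ that is an equivalence induces isomorphisms on all $H^0(-,S)$ and $H^1(-,G)$ after completion, so $\widehat{C}\to\widehat{D}$ is a weak equivalence in $\widehat{\G}$.

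The main obstacle I anticipate is the acyclic cofibration clause rather than the cofibration clause: while injectivity on objects is essentially formal, showing that $\widehat{(-)}$ sends an acyclic cofibration to a weak equivalence requires genuinely controlling the cohomological invariants $H^1(\widehat{C},G)$, and in particular checking that the action of $B^1$ on $Z^1$ used to define $H^1$ interacts correctly with completion. The subtle point is that $H^1$ is a quotient (a set of torsors), not a representable functor, so the adjunction does not directly give an isomorphism on $H^1$; one must argue that completing a weak equivalence in $\G$ preserves both the cocycle spaces $Z^1(-,G)=\Hom_{\widehat\G}(-,*//G)$ and the coboundary action by $B^1(-,G)=\Hom_{\widehat\G}(-,G)$ compatibly, so that the induced map on the quotients is a bijection. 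I expect this to be exactly the content that is verified in \cite[Proposition 4.22]{Horel1}, and I would either cite that computation or reduce to it by the adjunction identities above.
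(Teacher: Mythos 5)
The paper gives no proof of this proposition: it is imported verbatim from \cite[Proposition 4.22]{Horel1}, so the only ``internal'' argument is the citation itself. Your sketch is, in substance, the proof given in that reference, and it is correct in outline. The two working parts are exactly right: (i) completion preserves cofibrations because taking object sets commutes with completion and profinite completion of sets preserves injections; (ii) completion sends \emph{every} equivalence of groupoids to a weak equivalence of profinite groupoids, because the adjunction identifies the finite invariants $H^0(\widehat{C},S)$, $Z^1(\widehat{C},G)$, $B^1(\widehat{C},G)$ with the corresponding invariants of $C$ computed in $\G$ against the finite targets $S$, $*//G$ and $G$. Note that it is this direct check on weak equivalences (not the detour through fibrations in $\widehat{\G}$, which you float and then abandon) that carries the day; preserving all weak equivalences together with cofibrations is more than enough for left Quillen-ness.

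Three details in your write-up need repair, none fatal. First, ``the object-set functor is compatible with completion'' is not automatic: it holds because $\ob\circ\widehat{(-)}$ and $\widehat{(-)}\circ\ob$ are both left adjoint to the same functor $\widehat{\Set}\to\G$, namely the one sending a profinite set $\{S_i\}$ to the chaotic groupoid (exactly one morphism between any two objects) on $\lim_i S_i$, using that a cofiltered limit of chaotic groupoids is chaotic. Second, your parenthetical justification for injectivity --- that the unit $X\to|\widehat{X}|$ is injective --- is not the statement you need; what is needed is that $X\hookrightarrow Y$ implies $\widehat{X}\to\widehat{Y}$ is a monomorphism of profinite sets, which one proves by extending any finite-index equivalence relation on $X$ to one on $Y$ (e.g.\ collapse $Y\setminus X$ to a single class). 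Third, your worry about $H^1$ is misplaced in one direction and correct in another: since the $B^1$-action on $Z^1$ is defined by a composition formula natural in the profinite groupoid, the adjunction bijections on $Z^1$ and $B^1$ are automatically equivariant and therefore \emph{do} descend to bijections on the quotients $H^1$; the genuinely non-formal input is instead that an equivalence of discrete groupoids $C\to D$ induces a bijection on the quotients $\Hom_{\G}(-,*//G)/\Hom_{\G}(-,G)$, i.e.\ on conjugacy classes of homomorphisms out of $\pi_1$ on each component --- elementary, but it must be said. With these repairs your outline is a complete proof and coincides with the argument of \cite{Horel1}.
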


\begin{remark}\label{rem:prod gpoids}
As a left adjoint, the profinite completion functor should not be expected to preserve limits. However, it does preserve certain products. More precisely, suppose $A$ and $B$ are two groupoids with finitely many objects. Then $\widehat{A \times B}$ is isomorphic to $\widehat{A} \times\widehat{B}$. This fact appears in \cite[Proposition 4.23.]{Horel1}.
\end{remark}

Recall that $\S$ denotes the category of simplicial sets. We always consider this category as a simplicial model category with the Kan-Quillen model structure. We denote the underlying relative category by $\mathcal{S}$. 

The definition of a profinite space and the homotopy theory of such is more involved than that of profinite groupoids. We begin by describing an $\infty$-categorical incarnation of the category of profinite spaces. A space is said to be\emph{ $\pi$-finite} if it has finitely many components and finitely many non-trivial homotopy groups, each of which is finite. The category $\mathcal{S}_{\pi-fin}$ of $\pi$-finite spaces forms a relative subcategory of $\mathcal{S}$. It has finite homotopy limits since a homotopy pullback of $\pi$-finite spaces is $\pi$-finite. We can thus form $\mathrm{Pro}(\mathcal{S}_{\pi-fin})$ which is an $\infty$-category with all limits. The pro-category of an $\infty$-category is defined similarly to the $1$-categorical case but replacing $\Set$ with $\mathcal{S}$ (see, e.g., around \cite[7.1.6.1]{LurieHT}). We view $\mathrm{Pro}(\mathcal{S}_{\pi-fin})$ as a homotopical enhancement of Example \ref{expl:pro fin} and call it the \emph{$\infty$-category of profinite spaces}. 

A presentation of this $\infty$-category as a model category is given by Quick in \cite{quick}.  Denote the category of simplicial objects in profinite sets $\mathrm{Fun}(\Delta^{op},\widehat{\Set})$ by $\widehat{\S}$. Quick equips the category $\widehat{\S}$ with a model structure in which the cofibrations are the monomorphisms and the weak equivalences are those maps that induce isomorphisms on $\pi_0$, $\pi_1$ and on twisted cohomology with finite coefficients. This model structure is cocombinatorial. It is proved in \cite[Corollary 7.4.8]{barneapro} that its underlying $\infty$-category models the $\infty$-category of profinite spaces.

The adjunction $\Set\leftrightarrows \widehat{\Set}$ between sets and profinite sets induces an adjunction:
\begin{equation}\label{adjunction profinite completion}
\widehat{(-)}:\S\leftrightarrows\widehat{\S}:|-|
\end{equation}
which Quick shows to be a Quillen adjunction in \cite[Proposition 2.28]{quick}.

The following definition is due to Serre.

\begin{definition}
A discrete group $G$ is said to be \emph{good} if for any finite abelian group $M$ equipped with a $G$-action, the map $G\to \widehat{G}$ induces an isomorphism
\[H^i(\widehat{G},M)\xrightarrow{\cong} H^i(G,M).\]
\end{definition}

\begin{prop}\label{prop:prodcompl}
Let $X$ and $Y$ be two connected spaces whose homotopy groups are good. Then the map $\widehat{X \times Y} \to \widehat{X} \times \widehat{Y}$ is a weak equivalence of profinite spaces.
\end{prop}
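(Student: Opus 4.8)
The plan is to verify directly that $f\colon \widehat{X\times Y}\to \widehat X\times\widehat Y$ satisfies the three conditions characterising the weak equivalences of Quick's model structure on $\widehat{\S}$: that it induces isomorphisms on $\pi_0$, on $\pi_1$, and on twisted cohomology with finite coefficients. The first two are formal. Since $X$ and $Y$ are connected, so are $X\times Y$ and the product $\widehat X\times\widehat Y$, so both sides have trivial $\pi_0$. For $\pi_1$, I would use that $\pi_1(\widehat Z)\cong\widehat{\pi_1(Z)}$ for a connected space $Z$ (a standard property of the completion) together with the fact that profinite completion of groups preserves finite products; the latter is the one-object case of Remark~\ref{rem:prod gpoids}. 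Thus $\pi_1(\widehat{X\times Y})\cong \widehat{\pi_1 X\times\pi_1 Y}\cong\widehat{\pi_1 X}\times\widehat{\pi_1 Y}\cong\pi_1(\widehat X\times\widehat Y)$, and this isomorphism is the one induced by $f$.

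The cohomological condition is the substance of the proof. The key input I would isolate is the following \emph{goodness} statement for spaces: a connected space $Z$ all of whose homotopy groups are good is itself good, in the sense that the unit map $Z\to|\widehat Z|$ induces an isomorphism $H^*(\widehat Z;M)\cong H^*(Z;M)$ for every finite local system $M$. Granting this, I would first observe that the class of good groups is closed under finite products: this follows by comparing the Lyndon--Hochschild--Serre spectral sequences of $1\to H\to G\times H\to G\to 1$ and of its profinite analogue, whose $E_2$-pages agree term by term because $H$ and $G$ are good and the completion of groups preserves the relevant product (Remark~\ref{rem:prod gpoids}). Hence each $\pi_n(X\times Y)\cong\pi_n X\times\pi_n Y$ is good, so $X\times Y$ has good homotopy groups and the goodness statement applies to $X$, to $Y$, and to $X\times Y$ simultaneously.

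To conclude I would use that $f$ fits, via the adjunction $\widehat{(-)}\dashv|-|$ and the fact that $|-|$ preserves products, into a factorisation of the product of unit maps $g\colon X\times Y\to|\widehat X|\times|\widehat Y|$ as $g=|f|\circ u$, where $u\colon X\times Y\to|\widehat{X\times Y}|$ is the unit. On finite twisted cohomology $u^*$ is an isomorphism by goodness of $X\times Y$. I would then show that $g^*$ is an isomorphism by a Serre spectral sequence comparison for the two projections $X\times Y\to Y$ and $|\widehat X|\times|\widehat Y|\to|\widehat Y|$, regarded as a map of fibrations with fibre map $X\to|\widehat X|$ and base map $Y\to|\widehat Y|$. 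On $E_2$ the comparison reads $H^p\bigl(\widehat Y;\mathcal H^q(\widehat X;M)\bigr)\to H^p\bigl(Y;\mathcal H^q(X;M)\bigr)$; goodness of the fibre identifies the coefficient systems and goodness of the base then yields the isomorphism. By two-out-of-three applied to $g^*=u^*\circ|f|^*$, the map $f$ induces an isomorphism on twisted cohomology with finite coefficients, completing the verification.

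The main obstacle is the goodness statement for spaces, which I do not expect to be formal. I would prove it by induction up the Postnikov tower of $Z$, treating the principal fibrations $K(\pi_n Z,n)\to Z_n\to Z_{n-1}$: the base case is the goodness of $\pi_1 Z$ (through the comparison $H^*(\widehat{B\pi_1 Z};M)\cong H^*_{cont}(\widehat{\pi_1 Z};M)$), and the inductive step again compares Serre spectral sequences, using the goodness of the fibre $K(\pi_n Z,n)$, itself a consequence of $\pi_n Z$ being a good abelian group. The delicate points are that profinite completion does not commute with the formation of these fibrations on the nose, so one must argue entirely at the level of cohomology and carefully track the twisted coefficients coming from the $\pi_1$-action, and that one must ensure the spectral sequence comparisons are legitimate in the profinite setting (convergence together with finiteness of the coefficient modules appearing at each stage).
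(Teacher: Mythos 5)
Your route is genuinely different from the paper's. The paper never touches the cohomological clause in Quick's definition of weak equivalence: it proves that the map is an isomorphism on \emph{all} homotopy groups (using Quick's theorem that $\pi_i\widehat{Z}\cong\widehat{\pi_i Z}$ for connected $Z$ with good homotopy groups, the fact that completion of groups preserves finite products, and the fact that a product of good groups is good), and then shows that weak equivalences in $\widehat{\S}$ are \emph{detected} on homotopy groups, by transporting the corresponding property of Isaksen's model structure on $\mathrm{Pro}(\mathbf{S})$ through the Barnea--Harpaz--Horel Quillen equivalence. Your plan of checking the definition ($\pi_0$, $\pi_1$, finite twisted cohomology) directly is a legitimate alternative skeleton, and your $\pi_0$ and $\pi_1$ steps are fine; but the cohomological step, which you rightly identify as the substance, has a genuine gap.

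The gap is a finiteness problem, visible at two places. First, Quick's condition concerns the cohomology of the \emph{profinite} space (continuous cochains; equivalently the filtered colimit of the cohomologies of the finite levels), whereas the Serre spectral sequence you invoke is for the fibration of honest simplicial sets $|\widehat{X}|\times|\widehat{Y}|\to|\widehat{Y}|$; cohomology does not commute with the cofiltered limit defining $|-|$, so two-out-of-three as written yields a statement about $|f|$, not the statement about $f$ that the model structure requires. This part can be repaired: for a product projection one can build a Serre-type spectral sequence in continuous cohomology for $\widehat{X}\times\widehat{Y}\to\widehat{Y}$ as a filtered colimit of the level-wise classical ones. But then comes the real problem: its $E_2$-page has coefficient system $\mathcal{H}^q(\widehat{X};M)\cong H^q(X;M)$, and nothing in the hypotheses makes this group finite. (Take $X$ an infinite wedge of circles: it is connected with good homotopy groups, since free groups of arbitrary rank are good, yet $H^1(X;\mathbb{F}_p)$ is infinite.) Goodness of $Y$ identifies $H^p_{cont}(\widehat{Y};N)$ with $H^p(Y;N)$ only for \emph{finite} $N$; to pass to infinite $N$ one would write $N$ as a filtered colimit of finite submodules and commute the colimit through $H^p(Y;-)$, which is legitimate only when $Y$ has suitable finite-type properties (e.g.\ $\pi_1 Y$ of type $\mathrm{FP}_\infty$) --- again not among the hypotheses. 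The same finiteness issue undermines the Postnikov-tower induction proposed for your key ``goodness of spaces'' lemma, and it already affects the spectral-sequence argument you sketch for closure of goodness under products. So your strategy can plausibly be completed under additional finite-type assumptions on $X$ and $Y$ (assumptions that do hold for the operads to which the paper applies this proposition), but it does not prove the proposition as stated; the paper's detour through homotopy groups and the pro-categorical detection theorem is exactly what sidesteps these coefficient issues.
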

\begin{proof}
The profinite completion of a finite product of groups is the product of the profinite completions, and so the profinite completion of $\pi_i (X \times Y)$ is isomorphic to 
$$\widehat{\pi_i X} \times \widehat{\pi_i Y} \; .$$

For a connected space $Z$ whose homotopy groups are good, the profinite completion of $\pi_i Z$ is isomorphic to $\pi_i \widehat{Z}$ by \cite[Theorem 3.14]{quickremarks}. The hypothesis holds for $X$ and $Y$ by assumption, and also for the product $X \times Y$ since a product of good groups is a good group. We conclude that the map $\widehat{X \times Y} \to \widehat{X} \times \widehat{Y}$ is an isomorphism on homotopy groups. 

In order to finish this proof it suffices to prove that weak equivalences in $\widehat{\mathbf{S}}$ are detected on homotopy groups. This is claimed without proof in the last paragraph of the second section of \cite{quickremarks}, and thus we give a quick proof here. In \cite[Theorem 7.4.7]{barneapro}, it is shown that there is a Quillen equivalence
\[\Psi:L\mathrm{Pro}(\mathbf{S})\leftrightarrows\widehat{\mathbf{S}}:\Phi\]
where $L\mathrm{Pro}(\mathbf{S})$ is a certain Bousfield localization of Isaksen's model category on $\mathrm{Pro}(\mathbf{S})$ (defined in \cite{isaksenprospaces}). Since weak equivalences in $\mathrm{Pro}(\mathbf{S})$ are detected on homotopy groups, it follows that the same is true in $\widehat{\mathbf{S}}$.
 \end{proof}

\begin{remark}
The proposition also holds if $X$ and $Y$ have finitely many path components. However, it does not hold when the set of components is infinite. As a counterexample, let us take $X$ and $Y$ to be the discrete space $\mathbb{N}$, we claim that the map 
\[\widehat{\mathbb{N}\times\mathbb{N}}\to\widehat{\mathbb{N}}\times\widehat{\mathbb{N}}\]
is not an isomorphism. Indeed, let us consider the map $f:\mathbb{N}\times\mathbb{N}\to \{0,1\}$ which sends $(x,y)$ to $1$ if $x>y$ and $0$ otherwise. Then this map extends to a map $\widehat{\mathbb{N}\times\mathbb{N}}$ by universal property of the profinite completion. However, it is an easy exercise to check that, if $P$ and $Q$ are two equivalence relations on $\mathbb{N}$ that are such that the quotients $\mathbb{N}/P$ and $\mathbb{N}/Q$ are finite, then there cannot exist a map $g$ that makes the triangle 
\[
\xymatrix{
\mathbb{N}\times\mathbb{N}\ar[r]\ar[rd]_f& (\mathbb{N}/P)\times(\mathbb{N}/Q)\ar[d]^g\\
                                       &\{0,1\}
}
\] 
commute. In fact, one can even show that if there exists such a $g$, then $P$ and $Q$ must both be the finest equivalence relation on $\mathbb{N}$ (i.e. the one for which the equivalence classes are singletons).
\end{remark}
\section{$\infty$-operads}\label{sec:operads}

The profinite completion of a product of spaces is not in general isomorphic to the product of the profinite completions. However, in favorable cases, as in Proposition \ref{prop:prodcompl}, the comparison map is a weak equivalence. For this reason -- and under the assumptions of Proposition \ref{prop:prodcompl} -- the profinite completion of an operad does not yield an operad but rather an $\infty$-operad. We explain this in detail below, after giving a meaning to the term $\infty$-operad which suits our purpose.

We use the dendroidal category $\Omega$ from \cite{mw}. Objects of $\Omega$ are finite rooted trees.  Each such tree $T\in\Omega$ generates a colored operad $\Omega(T)$ which has the set of edges of $T$ for colors and operations are generated by the vertices of $T$. A morphism in $\Omega$ from $S$ to $T$ is defined as an operad map from $\Omega(S)$ to $\Omega(T)$. (For more details, see \cite{mw}.)

Some objects in $\Omega$ are given special mention and notation: the tree with no vertices (i.e. consisting of a single edge, the root) is denoted $\eta$; the tree with a single vertex and $n+1$ edges is called the $n^{th}$ corolla and is denoted by $C_{n}$. For a tree $T$ and vertex $v$ with $|v|$-input edges, there is an inclusion $C_{|v|} \hookrightarrow T$ which selects the edges connected to $v$; this gives rise to a map $X_T \to X_{C_{|v|}}$ for $X$ a contravariant functor on $\Omega$. A \emph{dendroidal object} in $\mathbf{C}$ is a contravariant functor $X$ from $\Omega$ to $\mathbf{C}$. The category of dendroidal objects is denoted $\mathbf{dC}$. 

\begin{definition}\label{defn:inftyoperad} Let $\mathbf{C}$ be a model category. An \emph{$\infty$-operad} in $\mathbf{C}$ is a dendroidal object $X$ such that the map $X_{\eta} \to *$ is a weak equivalence and, for every tree $T$, the Segal map
\[
X_T \to \prod_{v \in T} (X_{C_{|v|}})_f
\]
induced by the inclusion of corollas in $T$, is a weak equivalence in $\mathbf{C}$. Here, the product runs over all vertices of $T$, $|v|$ denotes the set of inputs at $v$, and the subscript $f$ denotes a fibrant replacement in $\mathbf{C}$. 
 \end{definition}
 
A couple of comments on the definition above: 
The first condition should be interpreted as the requirement that the dendroidal object $X$ has a single color.  The role of the fibrant replacements is to guarantee that the product is the homotopy product. In many cases, like in the category of spaces or groupoids, every finite product is a homotopy product hence such fibrant replacements are not needed. 

\medskip
Given  a model category $\mathbf{C}$, the category of dendroidal objects $\mathbf{d}\mathbf{C}$ admits a model category structure in which a map $X \to Y$ is a weak equivalence if $X_T \to Y_T$ is a weak equivalence in $\mathbf{C}$ for every tree $T$. We say such a weak equivalence is given levelwise.

\begin{definition}
The relative category of infinity operads $\Op_{\infty}(\mathbf{C})$ is the full relative subcategory of $\mathbf{d}\mathbf{C}$ spanned by $\infty$-operads (Definition \ref{defn:inftyoperad}) and the levelwise weak equivalences between them.
\end{definition}

By definition, given two $\infty$-operads $X$ and $Y$, the derived mapping space in $\Op_{\infty}$ is simply the derived mapping space $\Rmap(X,Y)$ computed in $\mathbf{dC}$.

\medskip
When $\mathbf{C}$ is the category of spaces or groupoids, an $\infty$-operad will be a fibrant object in the left Bousfield localization of $\mathbf{dC}$ at the Segal maps
$$\coprod_{v \in T} \Omega(-,C_{|v|}) \to \Omega(-,T)$$
for every tree $T$, and at the map $\varnothing \to \Omega(-,\eta)$. We denote this model structure by $L_S(\mathbf{dC})$. (See also \cite[Around Proposition 5.5]{cm2} and \cite[Proposition 4.3]{bh} for the monochromatic case.) If $\mathbf{C}$ is the category of profinite spaces, there are technical challenges to performing a similar left Bousfield localization. This stems from the fact that the model structure on profinite spaces is not cofibrantly generated but rather fibrantly generated. The definition of $\Op_{\infty}(\mathbf{C})$ above allows us to circumvent this issue, while still being robust enough for our purposes. 

\subsection{The nerve of an operad}
For a monochromatic operad $\mathsf{P}$ in $\mathbf{C}$, one can associate a dendroidal object $N\mathsf{P}$, called the \emph{nerve} of $\mathsf{P}$, by declaring
\[
(N \mathsf{P})_T := \prod_{v \in T} \mathsf{P}(|v|)
\]
for every tree $T$. Note that the value of $N\mathsf{P}$ at $\eta$ is a point. This is an $\infty$-operad if $\mathsf{P}$ is levelwise fibrant or if in $\mathbf{C}$ finite products and finite homotopy products agree. 

\medskip
The following proposition and the subsequent theorem are essentially due to Cisinski-Moerdijk, slightly adjusted to our context.
\begin{prop}\label{prop:nerve-is-ff}
Let $\mathbf{C}$ be the category of spaces or groupoids.
The nerve $N$ is homotopically fully faithful, i.e. for every pair of (monochromatic) operads $\mathsf{P}$ and $Q$ in $\mathbf{C}$, the map
\[
\Rmap(\mathsf{P},\mathsf{Q}) \to \Rmap(N \mathsf{P}, N \mathsf{Q})
\]
is a weak equivalence of spaces. Here the derived mapping spaces are computed in $\Op(\mathbf{C})$ and $\Op_{\infty}(\mathbf{C})$, respectively.
\end{prop}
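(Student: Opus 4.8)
The plan is to deduce the homotopical statement from an honest, point-set full faithfulness of the nerve, together with the observation that $N$ is right Quillen for the levelwise (projective) model structure. First I would check that $N$ is fully faithful as a functor of ordinary categories. The key point is that a dendroidal object of the form $N\mathsf{P}$ satisfies the Segal condition \emph{strictly}: by definition $(N\mathsf{P})_T = \prod_{v\in T}\mathsf{P}(|v|)$, so the Segal map $(N\mathsf{P})_T \to \prod_{v}(N\mathsf{P})_{C_{|v|}}$ is the identity. Consequently a morphism $N\mathsf{P}\to N\mathsf{Q}$ in $\mathbf{dC}$ is completely determined by its values on the corollas $C_n$, and naturality with respect to the maps of $\Omega$ encoding operadic composition forces the resulting collection $\{\mathsf{P}(n)\to\mathsf{Q}(n)\}$ to be a map of operads. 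This identifies $\Hom_{\mathbf{dC}}(N\mathsf{P},N\mathsf{Q})$ with $\Hom_{\Op(\mathbf{C})}(\mathsf{P},\mathsf{Q})$; that is, $N$ is fully faithful, with essential image the strict Segal objects.

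Next I would upgrade this to a simplicial statement. Since limits of operads are computed levelwise and finite products commute with levelwise limits, the functor $N$ preserves all limits, and in particular the cotensors by simplicial sets; combined with the $1$-categorical full faithfulness this gives, for $\mathbf{C}=\S$, a natural isomorphism $\map_{\Op(\S)}(\mathsf{P},\mathsf{Q}) \xrightarrow{\cong} \map_{\mathbf{d}\S}(N\mathsf{P},N\mathsf{Q})$. The case $\mathbf{C}=\G$ I would reduce to the case of spaces: the classifying space functor commutes with $N$ (as $B$ preserves products one has $BN\mathsf{P}=NB\mathsf{P}$ levelwise) and is homotopically fully faithful on both $\Op$ and $\mathbf{d}(-)$, so it suffices to treat dendroidal spaces. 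Finally, $N$ is right Quillen from $\Op(\mathbf{C})$ to $\mathbf{dC}$ with the projective (levelwise) model structure: a levelwise fibration (resp. trivial fibration) $\mathsf{P}\to\mathsf{Q}$ is sent to $\prod_v\mathsf{P}(|v|)\to\prod_v\mathsf{Q}(|v|)$, which is again a levelwise fibration (resp. trivial fibration) since products of fibrations are fibrations. In particular, for $\mathsf{Q}$ fibrant the object $N\mathsf{Q}$ is fibrant in $\mathbf{dC}$, and it is moreover an $\infty$-operad because it is strictly Segal.

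With these facts in place the conclusion is almost formal. Choosing $\mathsf{P}$ cofibrant and $\mathsf{Q}$ fibrant in $\Op(\mathbf{C})$, the left-hand side becomes $\Rmap(\mathsf{P},\mathsf{Q})\simeq \map_{\Op}(\mathsf{P},\mathsf{Q})\cong \map_{\mathbf{dC}}(N\mathsf{P},N\mathsf{Q})$ by Theorem \ref{rigidify} and the previous step, and the comparison map of the statement is identified with the map
\[
\map_{\mathbf{dC}}(N\mathsf{P},N\mathsf{Q}) \to \map_{\mathbf{dC}}((N\mathsf{P})^c,N\mathsf{Q})\simeq \Rmap_{\mathbf{dC}}(N\mathsf{P},N\mathsf{Q})
\]
induced by restriction along a cofibrant replacement $(N\mathsf{P})^c\xrightarrow{\sim}N\mathsf{P}$, using that $N\mathsf{Q}$ is fibrant. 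This map is a weak equivalence as soon as $N\mathsf{P}$ is itself cofibrant in $\mathbf{dC}$, so the whole proposition reduces to a single assertion: \emph{the nerve of a cofibrant operad is a cofibrant dendroidal object}. Equivalently, writing $\tau\dashv N$ for the adjunction (the left adjoint $\tau$ existing since $\mathbf{dC}$ is locally presentable), it reduces to showing that the derived counit $\mathbb{L}\tau(N\mathsf{P})\to \tau N\mathsf{P}=\mathsf{P}$ is a weak equivalence.

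I expect this last point to be the main obstacle, and it is exactly where the work of Cisinski--Moerdijk enters. The strategy I would follow is to reduce, via the cofibrancy criterion of Proposition \ref{prop: free op gp} (and its analogue over spaces), to operads that are cellular, built from free operads by attaching cells along generating cofibrations of symmetric sequences. For such $\mathsf{P}$ the nerve $N\mathsf{P}$ is assembled cellularly out of the corolla-representables $\Omega(-,C_n)$, which are projectively cofibrant, and one then checks that $\tau$ carries a cofibrant replacement of $N\mathsf{P}$ to a weak equivalence onto $\mathsf{P}$. The symmetric-group actions on the trees are the source of the only genuine difficulty and are handled exactly as in the normalization arguments of \cite{cm2}, adapted to the monochromatic Segal setting and to $\mathbf{C}=\S,\G$ as in \cite{bh}. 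Granting this, the displayed comparison map is a weak equivalence and the proposition follows, the groupoid case being obtained by transporting along the homotopically fully faithful functor $B$.
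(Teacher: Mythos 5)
Your first two steps (strict full faithfulness of $N$ on the point-set level, the isomorphism of simplicial mapping spaces, and the fact that $N$ is right Quillen with $N\mathsf{Q}$ fibrant and strictly Segal) are correct, and they are implicitly used in the paper as well. The problem is the claim you reduce everything to: \emph{the nerve of a cofibrant operad is a projectively cofibrant dendroidal object}. This is false. Take $\mathsf{P}$ to be the free operad on a single generator in arity $1$, so $\mathsf{P}(1)=\mathbb{N}$ (discrete) and $\mathsf{P}(n)=\emptyset$ otherwise; this is a cofibrant operad in spaces (and its groupoid analogue is cofibrant by Proposition \ref{prop: free op gp}). Its nerve is levelwise discrete, and on linear trees it is the ordinary nerve of the monoid $\mathbb{N}$, which is connected and has nondegenerate elements in every dimension (e.g. $(1,1,\dots,1)$). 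On the other hand, any projectively cofibrant object of $\mathbf{dC}$ is a retract of a cell complex built from the maps $\Omega(-,T)\times\partial\Delta^m\to\Omega(-,T)\times\Delta^m$; since attaching a cell with $m\geq 1$ does not change the presheaf of vertical $0$-simplices, that presheaf is a coproduct of representables for any cell complex, hence a retract of such for any cofibrant object. A levelwise discrete cofibrant object is therefore a retract of a coproduct of representables, and (being ``connected'' through its value at $\eta$) a retract of a single $\Omega(-,T)$ --- which forces all elements above a finite dimension to be degenerate. Contradiction. Relatedly, your cellular sketch rests on the statement that $N\mathsf{P}$ is ``assembled cellularly out of the corolla-representables'': it is not; $NF(K)$ is a product-type object, and the comparison map $\Omega(-,C_n)\times K\to NF(K)$ is only a weak equivalence in the \emph{Segal-localized} structure $L_S(\mathbf{dC})$ (this is exactly the paper's Lemma \ref{reduction_lemma}), not in the projective one, which is why no projective-cofibrancy argument can work.

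Your fallback formulation --- that it suffices to show the derived counit $\mathbb{L}\tau(N\mathsf{P})\to\mathsf{P}$ is an equivalence --- is not equivalent to the (false) cofibrancy claim; by adjunction and the Yoneda lemma it is equivalent to the proposition itself. So this ``reduction'' is circular, and the actual content is deferred to a gesture at normalization arguments in \cite{cm2}. That content is precisely what the paper supplies: for free operads, the map $\Rmap(\mathsf{P},\mathsf{Q})\to\Rmap(N\mathsf{P},N\mathsf{Q})$ is analyzed by composing with restriction along $\coprod_n\Omega(-,C_n)\times X(n)\to N\mathsf{P}$, which is an equivalence on mapping spaces into the Segal-fibrant object $N\mathsf{Q}$ by Lemma \ref{reduction_lemma}; the general case then follows by resolving $\mathsf{P}$ by the simplicial bar resolution $F^{\bullet+1}\mathsf{P}$ and using that homotopy colimits over $\Delta$ preserve the Segal condition. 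Neither of these two steps appears in your proposal, so the proof has a genuine gap at its central point.
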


The following technical step will be used in the proof of Proposition \ref{prop:nerve-is-ff}.

\begin{lemma}\label{reduction_lemma}Let $\mathbf{C}$ denote the category of spaces, and let $Z:=\Omega(-,C_{n}) \times K$ be an object of $\mathbf{dC}$ with $K$ a space and $C_n$ a corolla. Let $F(K)$ denote the free operad on the (non-symmetric) sequence with $K$ in degree $n$ and empty otherwise. Then the map 
\[
Z \to NF(K)
\] 
is a weak equivalence in $L_S(\mathbf{dC})$.
\end{lemma}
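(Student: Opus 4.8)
The plan is to show that the map $Z\to NF(K)$, which is a monomorphism (a routine check) and hence a cofibration, is in fact a \emph{trivial} cofibration in $L_S(\mathbf{dC})$; this is stronger than, and implies, the assertion. The only external inputs I need are formal: by construction of the localization the Segal maps, and more generally the Segal-core inclusions $\coprod_{v\in T}\Omega(-,C_{|v|})\to\Omega(-,T)$, are trivial cofibrations in $L_S(\mathbf{dC})$; trivial cofibrations are closed under pushout and transfinite composition; and, since $L_S(\mathbf{dC})$ is a simplicial model category, the pushout--product axiom allows me to tensor such a trivial cofibration with any space $K^{V(T)}$ and remain a trivial cofibration.

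Next I set up a filtration of $NF(K)$. Unwinding the definition, $NF(K)_S=\prod_{v\in S}F(K)(|v|)$ is the space of $n$-ary trees $T$ equipped with a contraction onto $S$ and a labelling of the vertices of $T$ by points of $K$; every element lies in the image of a tautological map $g_T\colon \Omega(-,T)\times K^{V(T)}\to NF(K)$, where $T$ ranges over $n$-ary trees and $V(T)$ denotes its set of vertices. The map $g_{C_n}$ is precisely $Z\to NF(K)$. Let $Y_q\subseteq NF(K)$ be the subobject generated by the images of the $g_T$ for $T$ with at most $q$ vertices. Then $Y_1$ is the image of $Z$ (so $Z\cong Y_1$), every element of $NF(K)$ lies in some $Y_q$, and $NF(K)=\colim_q Y_q$ is a colimit along monomorphisms.

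The inductive step is the heart of the argument. I claim that $Y_q\hookrightarrow Y_{q+1}$ is the pushout of the coproduct, over $n$-ary trees $T$ with exactly $q+1$ vertices, of the maps $(A_T\hookrightarrow\Omega(-,T))\times K^{V(T)}$, where $A_T\subseteq\Omega(-,T)$ is the union of the faces $\Omega(-,T'')$ coming from the proper $n$-ary subtrees $T''\subsetneq T$. The structural point, which uses the freeness of $F(K)$, is that $g_T$ restricted to $A_T\times K^{V(T)}$ already factors through $Y_q$ (it is the grafting of the cells of the smaller subtrees); and since contracting an \emph{inner} edge of an $n$-ary tree merges two $n$-ary vertices into one of arity $2n-1$, which is not $n$-ary (for $n\geq 2$), the inner faces of $\Omega(-,T)$ never meet $Y_q$, whereas its \emph{outer} faces stay $n$-ary and do. Granting that $Y_{q+1}\cap\Omega(-,T)$ is exactly $A_T$, each attaching inclusion $A_T\hookrightarrow\Omega(-,T)$ contains the Segal core and lies, like it, in the saturated class generated by the Segal maps, hence is an $L_S$-trivial cofibration; tensoring with $K^{V(T)}$ and forming the pushout preserves this, so $Y_q\hookrightarrow Y_{q+1}$ is an $L_S$-trivial cofibration. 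Composing over all $q$ exhibits $Z=Y_1\to NF(K)$ as a trivial cofibration in $L_S(\mathbf{dC})$.

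The main obstacle is the combinatorial content of the previous paragraph: identifying $Y_{q+1}$ with exactly this pushout, and, above all, proving that $A_T\hookrightarrow\Omega(-,T)$ belongs to the saturated class of the Segal maps (equivalently, that it is inner anodyne). This is the dendroidal analogue of the statement that the spine inclusion is inner anodyne, and it is where the specific geometry of $n$-ary trees must be exploited carefully, since one must control the full poset of faces rather than just the generating corollas. As an independent consistency check one can instead evaluate both sides against an arbitrary $\infty$-operad $W$: the derived Yoneda lemma and the tensoring adjunction give $\Rmap(Z,W)\simeq\map(K,W_{C_n})$, and the Segal condition on $W$ together with the universal property of the free operad forces $\Rmap(NF(K),W)$ to agree with it compatibly, which reconfirms that $Z\to NF(K)$ is an $L_S$-weak equivalence.
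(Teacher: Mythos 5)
Your strategy---filtering $NF(K)$ by the number of vertices and attaching cells $\Omega(-,T)\times K^{V(T)}$ along unions of $n$-ary outer faces---is a viable route, but as written the proof has a genuine gap, and you flag it yourself: the two claims that carry all of the weight (that $Y_q\hookrightarrow Y_{q+1}$ is exactly the stated pushout, and that $A_T\hookrightarrow\Omega(-,T)$ lies in the saturated class generated by the Segal maps) are asserted, not proved. The second is a generalized spine/anodyne statement whose proof is a genuine induction over the face poset of $T$; it is precisely the content that the paper chooses \emph{not} to reprove, instead factoring $Z\to NF(K)$ through the reduction $Z_*$ (collapsing $Z_\eta$ to a point), quoting Bergner--Hackney for the equivalence $Z_*\to NF(K)$ in the reduced Segal-type model structure, and checking that $Z\to Z_*$ is a homotopy pushout of the $L_S$-equivalence $\Omega(-,\eta)\times Z_\eta\to\Omega(-,\eta)$. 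Your final ``consistency check'' cannot plug the hole: computing $\Rmap(NF(K),W)$ for an arbitrary local $W$ is exactly what the lemma asserts, and the universal property of the free operad only identifies \emph{operad} maps out of $F(K)$, not maps of dendroidal spaces out of $NF(K)$, so that check is circular.

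Moreover, two parts of your setup are incorrect as stated and would have to be repaired before the induction could even start. First, if $T$ ranges over isomorphism classes of trees in $\Omega$, the maps $g_T$ do \emph{not} cover $NF(K)$, so $NF(K)$ is not the union of the $Y_q$ and the pushout description of $Y_q\subset Y_{q+1}$ fails. Take $n=2$ and $K=*$, so that $F(K)$ is the magma operad $\mathsf{M}$, and let $T$ be the unique (up to isomorphism) $2$-vertex binary tree: then $NF(K)_T=\mathsf{M}(2)\times\mathsf{M}(2)$ has four points, while the image of $g_T$ at level $T$ consists of only two of them (the $\mathrm{Aut}(T)$-orbit of the tautological point); the remaining two arise only from the tautological map attached to the \emph{other planar structure} on $T$. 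The reason is that $F(K)$ is the free \emph{symmetric} operad on a nonsymmetric generator, so its points carry $\Sigma_n$-decorations invisible to a single tautological element; the cells must be indexed by planar $n$-ary trees, and the intersection analysis must then control both the different planarizations of a given tree and the $\mathrm{Aut}(T)$-action. Second, the paper's $L_S(\mathbf{dC})$ is a left Bousfield localization of the \emph{projective} model structure, in which a monomorphism need not be a cofibration: neither $Z\to NF(K)$ nor the attaching maps $A_T\hookrightarrow\Omega(-,T)$ are projective cofibrations in general, so ``mono hence cofibration'', the pushout--product step, and closure of trivial cofibrations under pushout are not available as stated; the cell-attachment argument has to be run in a Reedy-type (normal) structure with the same weak equivalences, or else each pushout must be separately justified to be a homotopy pushout.
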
 

\begin{proof} 
For a dendroidal space $Z$, the \emph{reduction} $Z_*$ is the dendroidal space given by the pushout
\begin{equation}\label{reduction}
\xymatrix{
\Omega(-,\eta) \times Z_\eta \ar[r]\ar[d]& Z \ar[d]^-{}\\
\Omega(-,\eta) \ar[r]& Z_{*}
}
\end{equation} 
where the top horizontal map is the adjoint to the identity. The map in the statement of the lemma factors as
\begin{equation}\label{composite}
Z \to Z_* \to NF(K) \; .
\end{equation}
According to \cite[Proposition 4.4]{bh}, the right-hand map is a weak equivalence in a Segal-type model structure obtained from $\mathbf{dC}_*$, the category of dendroidal objects $X$ which have $X_\eta$ a point. The inclusion of that model structure into $L_S(\mathbf{dC})$ preserves all weak equivalences, hence the right-hand map is also a weak equivalence in $L_S(\mathbf{dC})$.

It remains to show that the left-hand map of (\ref{composite}) is a weak equivalence. The top horizontal map of the square (\ref{reduction}) is a Reedy cofibration of dendroidal spaces, therefore the square is a homotopy pushout. Moreover, the left-hand map in that square is a weak equivalence in $L_S(\mathbf{dC})$, so the result follows.
\end{proof}

\begin{proof}[Proof of Proposition~\ref{prop:nerve-is-ff}] We take $\mathbf{C}$ to be the category of spaces, and deduce the result for groupoids at the end of the proof.

The nerve functor preserves all weak equivalences and thus it induces a map between derived mapping spaces as in the statement.

Consider the case when $\mathsf{P}$ is a free operad, generated by a (non-symmetric) sequence $X = \{X(n)\}_{n\geq 0}$. In this case, the composite
\begin{equation}\label{maincomposite}
\Rmap(\mathsf{P},\mathsf{Q}) \to \Rmap(N \mathsf{P}, N \mathsf{Q}) \to \Rmap(\coprod_{n \geq 0} \Omega(-,C_n) \times X(n), N \mathsf{Q})
\end{equation}
is a weak equivalence. This is because both source and target derived mapping spaces are equivalent to the non-derived mapping spaces and $\map(\mathsf{P},\mathsf{Q})$ is isomorphic to $\map(\coprod_{n \geq 0} \Omega(-,C_n) \times X(n), N \mathsf{Q})$. The right-hand map is a weak equivalence by lemma~\ref{reduction_lemma}. By two-out-of-three, we can conclude that the left-hand map is also a weak equivalence.

To extend this result to a general operad $\mathsf{P}$, we resolve $\mathsf{P}$ by free operads. In other words, we form a simplicial object whose operad of $n$-simplices is given by $$F^{n+1}(\mathsf{P}) := \underbrace{F \circ \dots \circ F}_{n+1}(\mathsf{P}) \; ,$$ where $F$ denotes the free operad construction. This simplicial object $F^{\bullet+1}\mathsf{P}$ is augmented via the map $F(\mathsf{P}) \to \mathsf{P}$ given by operadic composition. It is also the case that $F^{\bullet+1}\mathsf{P}$ is homotopy equivalent to $\mathsf{P}$ as simplicial objects in simplicial operads, and that $F^{\bullet+1}\mathsf{P}$ is Reedy cofibrant. It follows that the maps
\begin{equation}\label{eq:freeP}
\hocolimsub{\Delta} F^{\bullet + 1}(\mathsf{P}) \to |F^{\bullet + 1}(\mathsf{P})| \to \mathsf{P}
\end{equation}
are weak equivalences. 

Now, we apply the nerve functor to this simplicial object, and investigate the map
\[
\hocolimsub{\Delta} NF^{\bullet + 1}(\mathsf{P}) \to N\mathsf{P} \; .
\]
First of all, this is a map of $\infty$-operads: homotopy colimit over $\Delta$ commutes with homotopy products, and so it preserves the Segal condition. Therefore, to verify that the map is a weak equivalence it is sufficient to show that it is a weak equivalence on corollas. But the latter condition is precisely the condition that the composite map (\ref{eq:freeP}) is a weak equivalence.
We have thus reduced the general case to the case of free operads, which we have already dealt with.

We now turn to the case of groupoids. The nerve functor $B : \G \to \S$ preserves products and is homotopically fully faithful. Therefore, by levelwise application, $B$ induces a functor from operads in groupoids to operads in spaces and from $\infty$-operads in groupoids to $\infty$-operads in spaces. For operads in groupoids $\mathsf{P}$ and $\mathsf{Q}$, we obtain a commutative square
\[
\xymatrix{
\Rmap(\mathsf{P},\mathsf{Q})\ar[r]\ar[d]&\Rmap(N\mathsf{P},N\mathsf{Q})\ar[d]\\
\Rmap(B\mathsf{P},B\mathsf{Q})\ar[r]&\Rmap(BN\mathsf{P},BN\mathsf{Q})
}
\]
We have shown above that the lower horizontal map is a weak equivalence (together with the observation that $BN \cong NB$). The vertical maps are weak equivalences because $B$ is homotopically fully faithful. This completes the proof.
\end{proof}

\begin{thm}[Cisinski-Moerdijk]\label{thm:CM} Let $\mathbf{C}$ denote the category of spaces or groupoids. The nerve functor $N$ is a right Quillen equivalence from the model structure on monochromatic operads in $\mathbf{C}$ to $L_S(\mathbf{dC})$, the localization of the projective model structure on dendroidal objects in $\mathbf{C}$ where the fibrant objects are the $\infty$-operads.
\end{thm}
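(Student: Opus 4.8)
The plan is to produce an explicit left adjoint $L$ to the nerve, verify that $(L,N)$ is a Quillen pair with respect to the localized structure $L_S(\mathbf{dC})$, and then upgrade the homotopical full faithfulness of Proposition \ref{prop:nerve-is-ff} to a Quillen equivalence. The conceptual point is that a right Quillen functor whose derived functor $\mathbb{R}N$ is fully faithful is a Quillen equivalence as soon as $\mathbb{R}N$ is essentially surjective onto the homotopy category of $\infty$-operads; so after the formal work the entire burden of proof lies in one essential-surjectivity (i.e. rectification) statement, which is the genuine Cisinski--Moerdijk input.

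First I would pin down $L$. By enriched Yoneda and the defining formula $(N\mathsf{P})_T=\prod_{v\in T}\mathsf{P}(|v|)$, its value on representables is forced:
\[
\map_{\Op(\mathbf{C})}\big(L\,\Omega(-,T),\mathsf{P}\big)\cong (N\mathsf{P})_T\cong\prod_{v\in T}\mathsf{P}(|v|)\cong\map_{\Op(\mathbf{C})}\Big(\coprod_{v\in T}F(|v|),\mathsf{P}\Big),
\]
where $F(n)$ denotes the free operad on a single $n$-ary generator, so that $\map_{\Op(\mathbf{C})}(F(n),\mathsf{P})\cong\mathsf{P}(n)$. Thus $L\,\Omega(-,T)\cong\coprod_{v\in T}F(|v|)$; in particular $L\,\Omega(-,C_n)\cong F(n)$ and $L\,\Omega(-,\eta)$ is the initial operad. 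Left Kan extension defines $L$ on all of $\mathbf{dC}$.

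Next I would check that $N$ is right Quillen into $L_S(\mathbf{dC})$. Since fibrations and weak equivalences in $\Op(\mathbf{C})$ are levelwise and $(N-)_T$ is a finite product, $N$ sends (trivial) fibrations to projective (trivial) fibrations, so $(L,N)$ is a Quillen pair for the unlocalized projective structure. To descend to the localization I would use its universal property: it is enough that $\mathbb{L}L$ carries each localizing map to a weak equivalence. These maps are between projectively cofibrant objects, and by the computation above $L$ sends the Segal map $\coprod_v\Omega(-,C_{|v|})\to\Omega(-,T)$ to an isomorphism $\coprod_vF(|v|)\to\coprod_vF(|v|)$, and the reduced map $\varnothing\to\Omega(-,\eta)$ to an isomorphism of initial operads; both are weak equivalences. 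On the target side this is precisely Lemma \ref{reduction_lemma} with $K=\ast$. Hence $N\colon\Op(\mathbf{C})\to L_S(\mathbf{dC})$ is right Quillen.

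It then remains to prove that $\mathbb{R}N$ is essentially surjective. Given an $\infty$-operad $X$, the candidate is the operad $\mathsf{P}:=\mathbb{L}L(X)$, and I must show the derived unit $X\to N\,\mathbb{L}L(X)$ is a weak equivalence; as both sides are $\infty$-operads, the Segal condition reduces this to checking on corollas, i.e. that $X_{C_n}\to(\mathbb{L}L\,X)(n)$ is a weak equivalence for every $n$. For $X$ a coproduct of objects $\Omega(-,C_n)\times K$ this is again Lemma \ref{reduction_lemma}, so the issue is to propagate it to a general $\infty$-operad by resolving $X$ through its corolla cells (using $\Omega(-,T)\simeq\coprod_v\Omega(-,C_{|v|})$ in $L_S(\mathbf{dC})$) and controlling $N\,\mathbb{L}L$ on the resulting homotopy colimits. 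This rectification of the homotopy-coherent operad structure on $\{X_{C_n}\}$ into a strict operad is the main obstacle and the heart of the theorem. Finally, for $\mathbf{C}=\G$ I would reduce to spaces as at the end of the proof of Proposition \ref{prop:nerve-is-ff}: an $\infty$-operad in groupoids is levelwise $1$-truncated, the case of spaces produces an equivalent space-level nerve that is again levelwise $1$-truncated and hence the nerve of its fundamental-groupoid operad, and the homotopical full faithfulness of $B$ promotes this to the desired equivalence of groupoid $\infty$-operads.
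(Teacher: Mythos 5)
Your proposal has the same skeleton as the paper's proof: the same left adjoint (your formula $L\,\Omega(-,T)\cong\coprod_{v\in T}F(|v|)$ is the paper's ``free operad on the vertices of $T$'', since the free-operad functor turns coproducts of sequences into coproducts of operads), the same Quillen-pair check, and the same appeal to Proposition \ref{prop:nerve-is-ff}; you are in fact more careful than the paper in one place, since the paper never verifies that the adjunction descends to the localization $L_S(\mathbf{dC})$, whereas you check via the universal property of left Bousfield localization that $L$ sends the Segal maps and $\varnothing\to\Omega(-,\eta)$ to weak equivalences. Where you genuinely differ is the endgame. The paper reduces, using homotopy conservativity of $N$, to the derived unit being a weak equivalence and then asserts that this ``follows from Proposition \ref{prop:nerve-is-ff}''; you instead decompose the problem as full faithfulness plus essential surjectivity of $\mathbb{R}N$, and you declare essential surjectivity to be the irreducible Cisinski--Moerdijk input, which you do not prove. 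Your bookkeeping is the more accurate of the two: full faithfulness of a right adjoint formally yields only that the derived counit is an equivalence (it also implies conservativity, so the paper's conservativity step adds nothing beyond it), hence the derived unit is an equivalence only at objects in the essential image of $\mathbb{R}N$; at a general $\infty$-operad $X$ the unit equivalence is precisely the rectification statement that $X$ is levelwise equivalent to the nerve of the strict operad $\mathbb{L}L X$, and that is not a formal consequence of the \emph{statement} of Proposition \ref{prop:nerve-is-ff} --- it must be extracted either by rerunning the resolution arguments of its proof (Lemma \ref{reduction_lemma}, free resolutions, commutation of $N$ with $\Delta$-indexed homotopy colimits) against the unit, or by citing Cisinski--Moerdijk, which is what the theorem's attribution amounts to. So both your argument and the paper's terminate in the same unproven kernel; yours names it honestly, while the paper presents it as a corollary of full faithfulness. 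Since you likewise do not supply the rectification, your proposal is, like the paper's, a reduction of the theorem to the cited Cisinski--Moerdijk result rather than a self-contained proof, but the reduction itself is correct, and your sketch of how the rectification would go (corolla cells plus control of $N\mathbb{L}L$ on homotopy colimits) points at the right argument.
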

\begin{proof}
The nerve functor has a left adjoint $L$. The value of $L$ on a representable $\Omega(-,T)$ is the free operad on a sequence $\{X(n)\}$ with $X(n)$ the set of vertices of $T$ with $n$ inputs. This prescription uniquely defines $L$. The pair $(L,N)$ is Quillen since $N$ preserves fibrations and weak equivalences. Moreover, $N$ detects weak equivalences, i.e. $N$ is homotopy conservative. Therefore, to show that the pair is a Quillen equivalence, it is enough to show that the derived unit map is a weak equivalence. This follows from Proposition \ref{prop:nerve-is-ff}.
\end{proof}

\section{Profinite completion of operads}\label{sec:pro-operads}

The adjunctions (\ref{adjunction_profinite_group}) and (\ref{adjunction profinite completion}) relating groupoids and profinite groupoids and the space version, give rise to simplicial Quillen adjunctions
\[
\mathbf{d}\S \leftrightarrows \mathbf{d}\widehat{\S} \quad \mbox{ and } \quad \mathbf{d}\G \leftrightarrows \mathbf{d}\widehat{\G}
\]
where all the categories are equipped with model structures where weak equivalences are given levelwise.

Given an $\infty$-operad $X$ in spaces or groupoids, write $\widehat{X}$ for the dendroidal object obtained by applying profinite completion levelwise. In general, $\widehat{X}$ is not an $\infty$-operad. If it is, then we have a weak equivalence
\[
\Rmap(\widehat{X}, Y) \simeq \Rmap(X, |Y_f|)
\]
natural in $Y \in \Op_{\infty}(\widehat{\S})$, where $|Y_f|$ is the $\infty$-operad in spaces whose value at a tree $T$ is $|(Y_T)_f|$, and $f$ is a fibrant replacement functor in $\widehat{\S}$. In these circumstances, we call $\widehat{X}$ the \emph{profinite completion} of the $\infty$-operad $X$.

In general, it is reasonable to define the profinite completion of $X$ as the $\infty$-operad characterized by the formula above. In other words, it is the $\infty$-operad in profinite spaces which corepresents the functor 
\[ Y \mapsto \Rmap(X, |Y_f|) \]
for $Y \in \Op_{\infty}(\widehat{\S})$. At any rate, in the cases that we are interested in, the levelwise profinite completion always produces an $\infty$-operad, using the following observation.

\begin{prop}\label{prop:Nisop}
Let $\mathsf{P}$ be an operad in spaces such that each $\mathsf{P}(n)$ has finitely many components and its homotopy groups are good. Then $(N\mathsf{P})^{\wedge}$ is an $\infty$-operad in profinite spaces.
\end{prop}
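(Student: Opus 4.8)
The plan is to verify the two conditions of Definition~\ref{defn:inftyoperad} directly for the dendroidal object $(N\mathsf{P})^{\wedge}$, whose value at a tree $T$ is the levelwise completion $\widehat{(N\mathsf{P})_T} = \widehat{\prod_{v\in T}\mathsf{P}(|v|)}$. The first condition is immediate: since $(N\mathsf{P})_\eta$ is a point and the profinite completion of a point is a point, the map $((N\mathsf{P})^{\wedge})_\eta \to *$ is an isomorphism, hence a weak equivalence. All the content is in the Segal condition.

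For a tree $T$, the Segal map to be analyzed is
\[
\widehat{\prod_{v\in T}\mathsf{P}(|v|)} \longrightarrow \prod_{v\in T}\bigl(\widehat{\mathsf{P}(|v|)}\bigr)_f,
\]
where the fibrant replacement is taken in $\widehat{\S}$. I would factor it as the composite
\[
\widehat{\prod_{v\in T}\mathsf{P}(|v|)} \longrightarrow \prod_{v\in T}\widehat{\mathsf{P}(|v|)} \longrightarrow \prod_{v\in T}\bigl(\widehat{\mathsf{P}(|v|)}\bigr)_f
\]
and show that each map is a weak equivalence. For the first map, the key input is Proposition~\ref{prop:prodcompl}: each factor $\mathsf{P}(|v|)$ has finitely many components and good homotopy groups by hypothesis, so the comparison map from the completion of a product to the product of completions is a weak equivalence. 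Since a product of good groups is good and a finite product of spaces with finitely many components again has finitely many components, these hypotheses are inherited by partial products. One can therefore peel off a single vertex at a time and conclude, by induction on the number of vertices of $T$, that the first map is a weak equivalence.

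For the second map I would invoke the fact --- established in the course of the proof of Proposition~\ref{prop:prodcompl} --- that weak equivalences in $\widehat{\S}$ are detected on homotopy groups, together with the observation that homotopy groups commute with finite products. Each constituent map $\widehat{\mathsf{P}(|v|)} \to (\widehat{\mathsf{P}(|v|)})_f$ is a fibrant replacement, hence a weak equivalence, so it is an isomorphism on every homotopy group; taking the finite product preserves this, so the second map is a weak equivalence as well. The main obstacle, and the reason the statement is not formal, is precisely that profinite completion is a left adjoint and does not commute with products on the nose: everything hinges on Proposition~\ref{prop:prodcompl} and on verifying that its hypotheses propagate through the iterated products indexed by the vertices of $T$. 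The ancillary point --- that the ordinary product of fibrant profinite spaces computes the homotopy product appearing in the Segal condition --- is where the profinite setting genuinely differs from the case of spaces or groupoids, in which finite products are automatically homotopy products and no fibrant replacement is needed.
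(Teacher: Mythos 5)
Your proposal is correct and takes essentially the same approach as the paper: the paper's entire proof is the single remark that the statement is an immediate consequence of Proposition~\ref{prop:prodcompl} (together with the remark extending it to spaces with finitely many components), and your argument is precisely that deduction made explicit. The additional bookkeeping you supply --- the induction over the vertices of $T$, the observation that goodness and finiteness of $\pi_0$ are inherited by partial products, and the use of detection of weak equivalences in $\widehat{\S}$ on homotopy groups to dispose of the fibrant replacements --- is exactly what the paper's ``immediate'' suppresses, and it matches how the paper itself argues inside the proof of Proposition~\ref{prop:prodcompl}.
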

\begin{proof}
This is an immediate consequence of Proposition \ref{prop:prodcompl}.
\end{proof}

\begin{remark}\label{rem:Nisopgp}
For operads in groupoids, the situation is nicer. Indeed, if $\mathsf{P}$ is an operad in groupoids in which each groupoid $\mathsf{P}(n)$ has finitely many objects, then $(N\mathsf{P})^{\wedge}$ is a strict operad in profinite groupoids by Remark \ref{rem:prod gpoids}. This applies for instance to the operads $\pab$ and $\parb$ defined in the next section.
\end{remark}

\section{Braids and Ribbon Braids}\label{ribbon_braid}

The braid group on $n$ strands, hereafter denoted $\br(n),$ is the fundamental group of the space of unordered configurations of $n$ points in the complex plane. This group has a preferred presentation with generators $\{\boldsymbol{\beta}_i\}_{1\leq i\leq n-1}$ which are subject to the so-called Artin relations:
\begin{itemize}
\item $\boldsymbol{\beta}_i\boldsymbol{\beta}_j=\boldsymbol{\beta}_j\boldsymbol{\beta}_i$ if $|i-j|\geq2$.
\item $\boldsymbol{\beta}_i\boldsymbol{\beta}_{i+1}\boldsymbol{\beta}_i=\boldsymbol{\beta}_{i+1}\boldsymbol{\beta}_i\boldsymbol{\beta}_{i+1}$.
\end{itemize}
The braid and symmetric groups fit in a short exact sequence of groups
\[1\to\pb(n)\to\br(n)\to \Sigma_n\to 1\]
where $\pb(n)$ is the pure braid group on $n$ strands. The pure braid group is also the fundamental group of the space of ordered configurations of points in the plane. In terms of generators, the map $\br(n)\to\Sigma_n$ sends the elementary braid $\boldsymbol{\beta}_i$ to the permutation $(i,i+1)$.  

In this paper, we are also concerned with the ribbon versions of these two groups. The ribbon braid group, denoted $\rb(n)$ is the fundamental group of the space of unordered configurations of $n$ points in the plane, where each point is equipped with a choice of a label in $S^1$. There is an obvious map $\rb(n)\to\br(n)$ that corresponds, at the space level, to forgetting the data of the label. This map is split surjective (a section exists at the space level by giving each point in the configuration a fixed label). There is a presentation of the ribbon braid group $\rb(n)$ that is compatible with the inclusion of $\br(n)$. It has generators $\boldsymbol{\beta}_i$ with $i$ in $\{1,\ldots,n-1\}$ and $\boldsymbol{\tau}_j$ with $j$ in $\{1,\ldots n\}$ subject to the relations
\begin{itemize}
\item $\boldsymbol{\beta}_i\boldsymbol{\tau}_j=\boldsymbol{\tau}_j\boldsymbol{\beta}_i$ for $j\notin \{i,i+1\}$.
\item $\boldsymbol{\beta}_i\boldsymbol{\tau}_{i+1}=\boldsymbol{\tau}_i\boldsymbol{\beta}_i$.
\item $\boldsymbol{\tau}_i\boldsymbol{\tau}_j=\boldsymbol{\tau}_j\boldsymbol{\tau}_i$ if $i\neq j$.
\end{itemize}
as well as the Artin relations. Another way to think of this group is to let $\br(n)$ act on the left on $\mathbb{Z}^n$ by the composite
\[\br(n)\xrightarrow{}\Sigma_n\to \mathrm{GL}_n(\mathbb{Z})\]
where the second map is the map that sends a permutation to its permutation matrix. The reader can easily check from the above presentation that there is an isomorphism $\rb(n)\cong \br(n) \ltimes \mathbb{Z}^n$.

There is also a map $\rb(n)\to\Sigma_n$ that is given in terms of generators by sending $\boldsymbol{\beta}_i$ to $(i,i+1)$ and $\boldsymbol{\tau}_j$ to the identity. This map is surjective and its kernel is the pure ribbon braid group on $n$ strands denoted $\prb(n)$. This group is also the fundamental group of the space of ordered configurations of $n$ points in the plane each equipped with a choice of a label in $S^1$. This space splits as a product of the space of ordered configurations of $n$ points in the plane with the space $(S^1)^n$. It follows that $\prb(n)$ splits as $\pb(n)\times\mathbb{Z}^n$.

\subsection{Colored (ribbon) braid operad} 
In this section, we describe two operads in groupoids, $\cob$ and $\corb$, which are central to the paper. They are models for the operad of little $2$-disks and its framed version (in the variants without $0$-arity operations). 

We first recall the definition of the non-unital associative operad below. 

\begin{definition}\label{def_symmgrp_operad} The operad $\Sigma$ is an operad in sets whose arity zero term is the empty set and whose arity $n$ term (for $n$ positive) is the symmetric group $\Sigma_n$. Operadic composition $\circ_{k}:\Sigma_{m}\times\Sigma_{n}\rightarrow \Sigma_{m+n-1}$ consists of placing $\sigma\in\Sigma_{n}$ in position $k$ of $\tau\in\Sigma_m$ and reindexing. For example $(132)\circ_{2}(12)=(1423)$. 
\end{definition}

\begin{definition}\label{cob}  The operad of colored braids $\cob=\{\cob(n)\}_{n\geq0}$ consists of a collection of groupoids $\cob(n)$ defined as follows.
\begin{itemize}
\item $\cob(0)$ is the empty groupoid.
\item For $n>0$, the set of objects $\ob(\cob(n))$ is $\Sigma_{n}$ \,.
\item A morphism in $\cob(n)$ from $p$ to $q$ is a braid $\alpha\in\br(n)$ whose associated permutation is $qp^{-1}$.
\end{itemize}

The categorical composition in $\cob(n)$, 
$$\Hom_{\cob(n)}(p,q)\times\Hom_{\cob(n)}(q,t)\rightarrow \Hom_{\cob(n)}(p,t)$$
is given by the concatenation operation of braids, inherited from the braid group. We write $a \cdot b$ for the \emph{categorical composition} of $a$ and $b$.

The operadic composition operation
$$\circ_{k}:\cob(m) \times \cob(n)\longrightarrow \cob(m+n-1)$$
is defined as follows. On objects, it is given by operadic composition of permutations, as in the associative operad. On morphisms, it corresponds to replacing a chosen strand by a braid; given morphisms $\alpha$ in $\cob(m)$ and $\beta$ in $\cob(n)$, the braid $\alpha \circ_k \beta$ is obtained by replacing the $k^{th}$ strand in $\alpha$ by the braid $\beta$ as in the picture below:

\begin{center}
\includegraphics[scale=0.3]{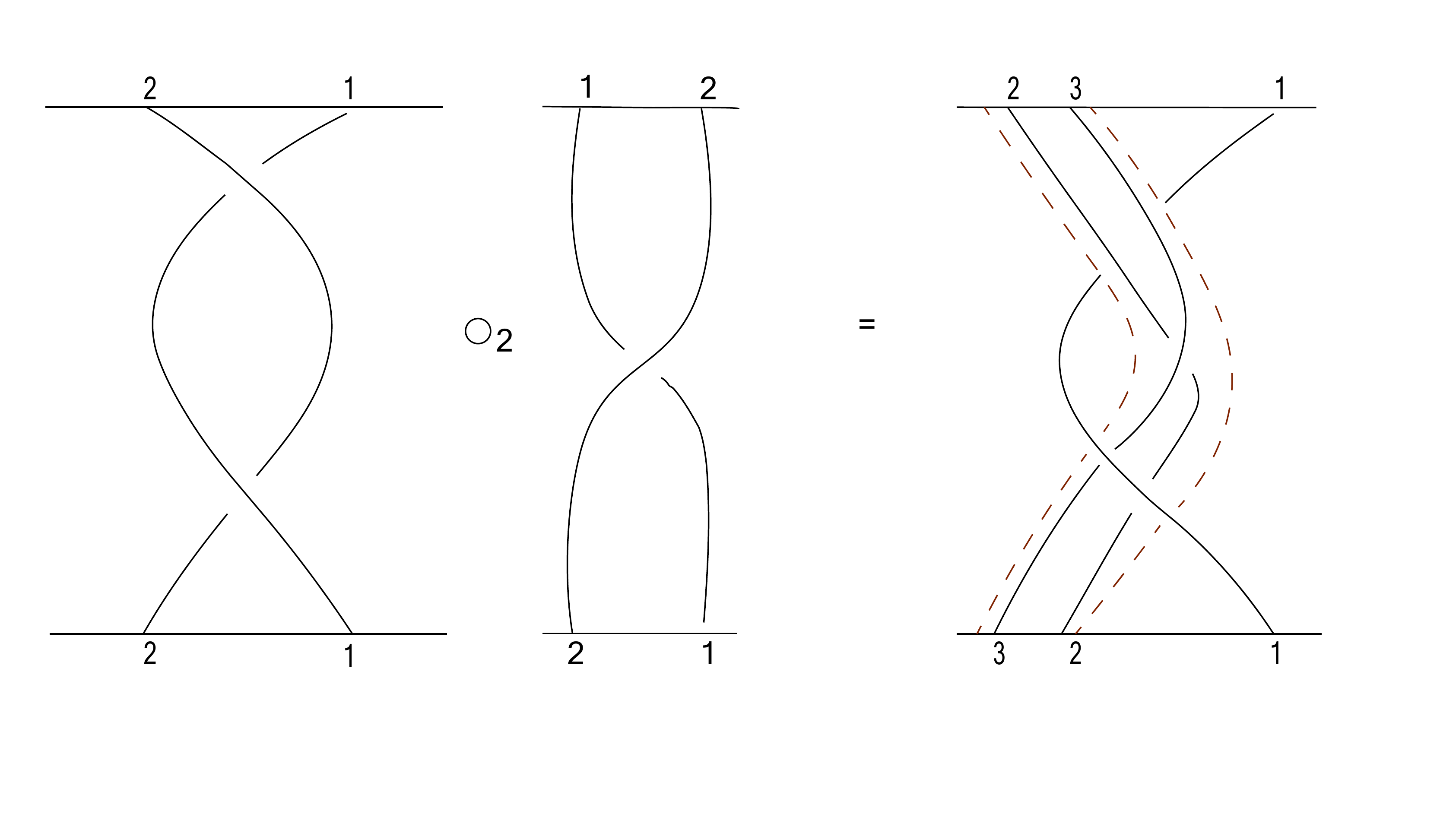}
\end{center} 
\end{definition}

\begin{remark}
Any morphism in $\cob(n)$, i.e. a braid, has an expression as a categorical composition of elementary braids $\boldsymbol{\beta}_i$. Moreover, each elementary braid can be expressed as an operadic composition of an identity morphism (a trivial braid) and a morphism $\boldsymbol{\beta}$ in $\cob(2)$, the non-trivial braid on two strands pictured below. 

\begin{center}
\includegraphics[scale=0.3]{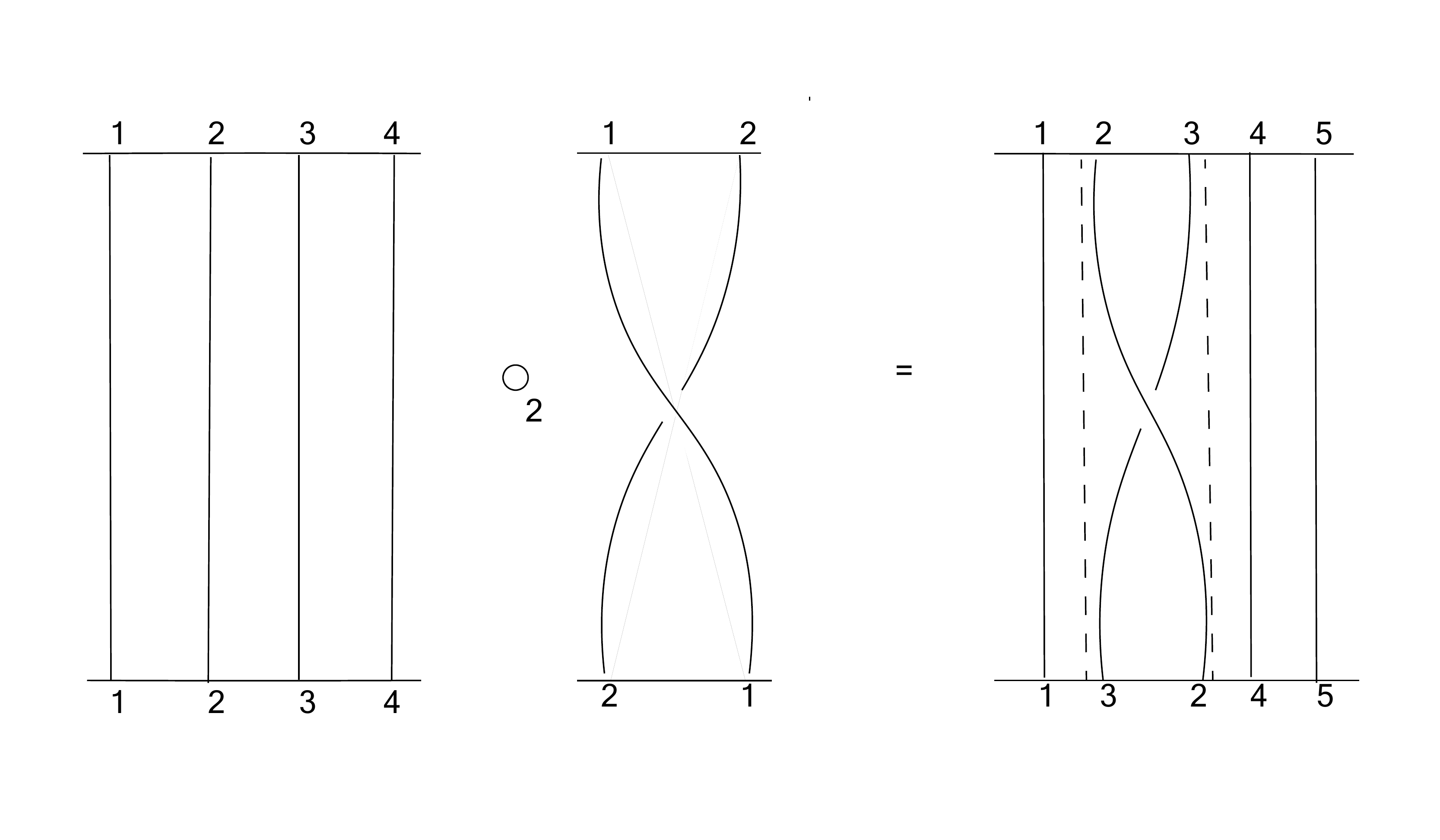}
\end{center} 

 Therefore any morphism in $\cob(n)$ can be expressed as a categorical composition of morphisms obtained as operadic compositions of identities and $\boldsymbol{\beta}$.
\end{remark} 

The operad $\cob$ has a ribbon version $\corb$ that we now define.

\begin{definition}\cite[Example 1.2.9]{wahlthesis}\label{corb}
The groupoid of colored ribbon braids on $n$ strands $\corb(n)$ is the groupoid defined as follows.

\begin{itemize} 
\item $\corb(0)$ is the empty groupoid.
\item For $n>0$, the set of objects $\ob(\corb(n))$ is $\Sigma_{n}$ \,.
\item A morphism in $\corb(n)$ from $p$ to $q$ is a pair $(\gamma, [x_1, \dots, x_n])$ where $\gamma \in \Hom_{\cob(n)}(p,q)$ and $x_i \in \mathbb{Z}$. We think of such a morphism as a braid equipped with the additional data of a twisting number $x_i$ for each strand $i$.
\end{itemize} 

Composition of morphisms in $\corb(n)$ is given by composing the morphisms in $\cob(n)$ and adding the twisting numbers. The identity element is the trivial braid with no twists on each strand.

The sequence of groupoids $\corb=\{\corb(n)\}_{n\geq0}$ forms an operad. On objects, it is the associative operad, as for $\cob$. To define the operadic composition on morphisms, we first introduce some notation: for a non-negative integer $m$, we write $\boldsymbol{R}_m$ for the element $(\boldsymbol{\beta}_1 \dots \boldsymbol{\beta}_{m-1})^m$ in $\pb(m)$. Given morphisms $(\gamma, [x_1, \dots, x_n])$ and $(\alpha, [y_1, \dots, y_m])$ in $\corb(n)$ and $\corb(m)$, respectively, their operadic composition (at entry $k$) is the morphism $$(\omega, [x_1, \dots, x_{k-1}, x_{k} + y_1, \dots, x_k + y_m, \dots, y_m])$$ in $\corb(n+m-1)$ with
\[
\omega := \gamma \circ_k ((\boldsymbol{R}_m)^{x_k} \cdot \alpha) 
\]
where $(\boldsymbol{R}_m)^{x_k}$ is the $x_k$-fold categorical composition of $\boldsymbol{R}_m$ considered as an automorphism in $\cob(m)$, and $\circ_k$ is the operadic composition product in $\cob$. Since $\boldsymbol{R}_m$ is an element of the center of $\pb(m)$, the operation just defined is compatible with the groupoid structure. The operadic identity is given by the trivial braid with one strand and no twists. 

Below is a picture of a special case of the operadic composition in $\corb$, corresponding to $(id,[1]) \circ (id, [0,0])$. (When it is non-zero, we draw the twisting number of a strand in a grey box over that strand.)

\begin{center}
\includegraphics[scale=0.3]{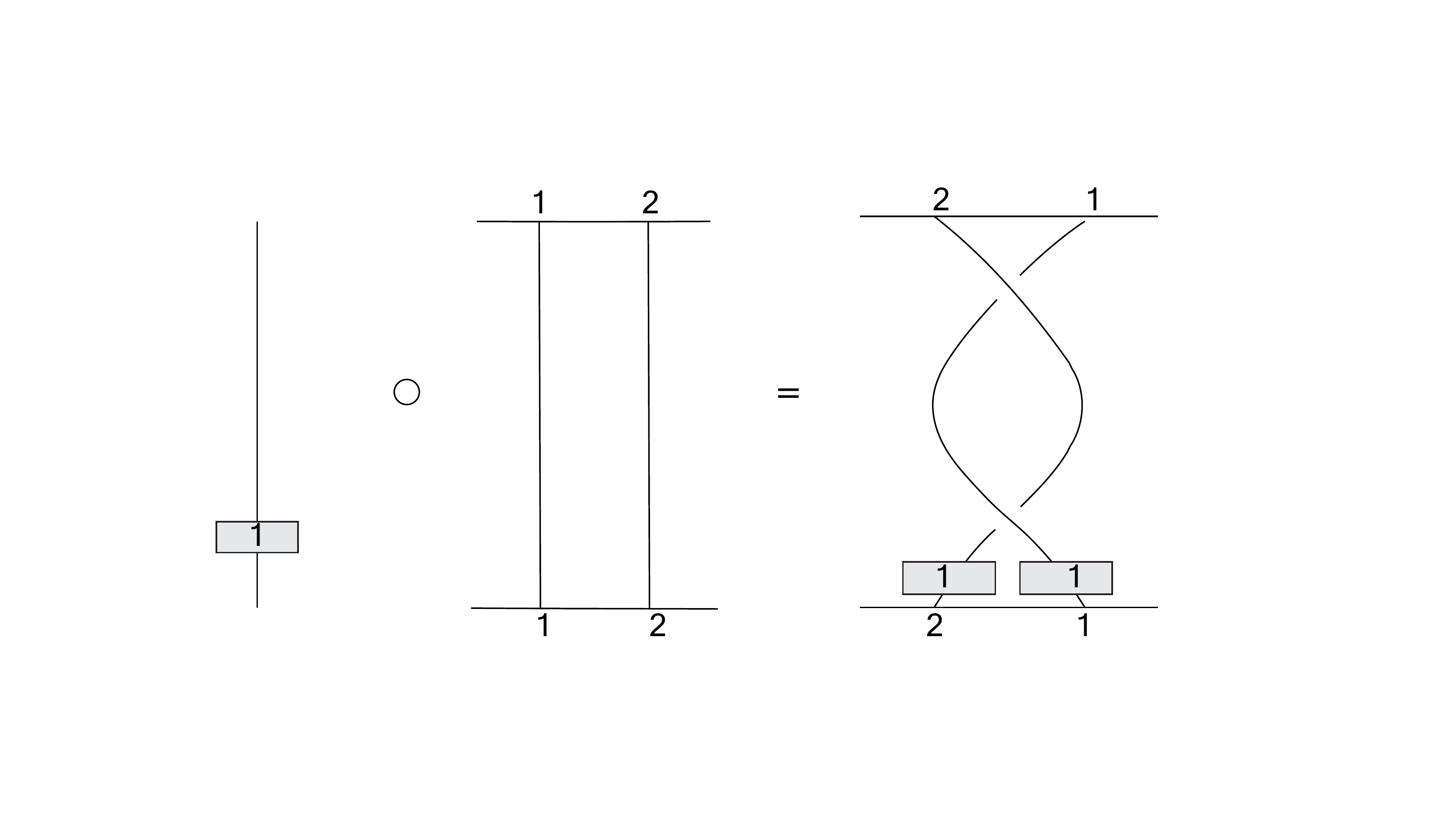}
\end{center} 

Another example of an operadic composition in $\corb$ is pictured below.

\begin{center}
\includegraphics[scale=0.3]{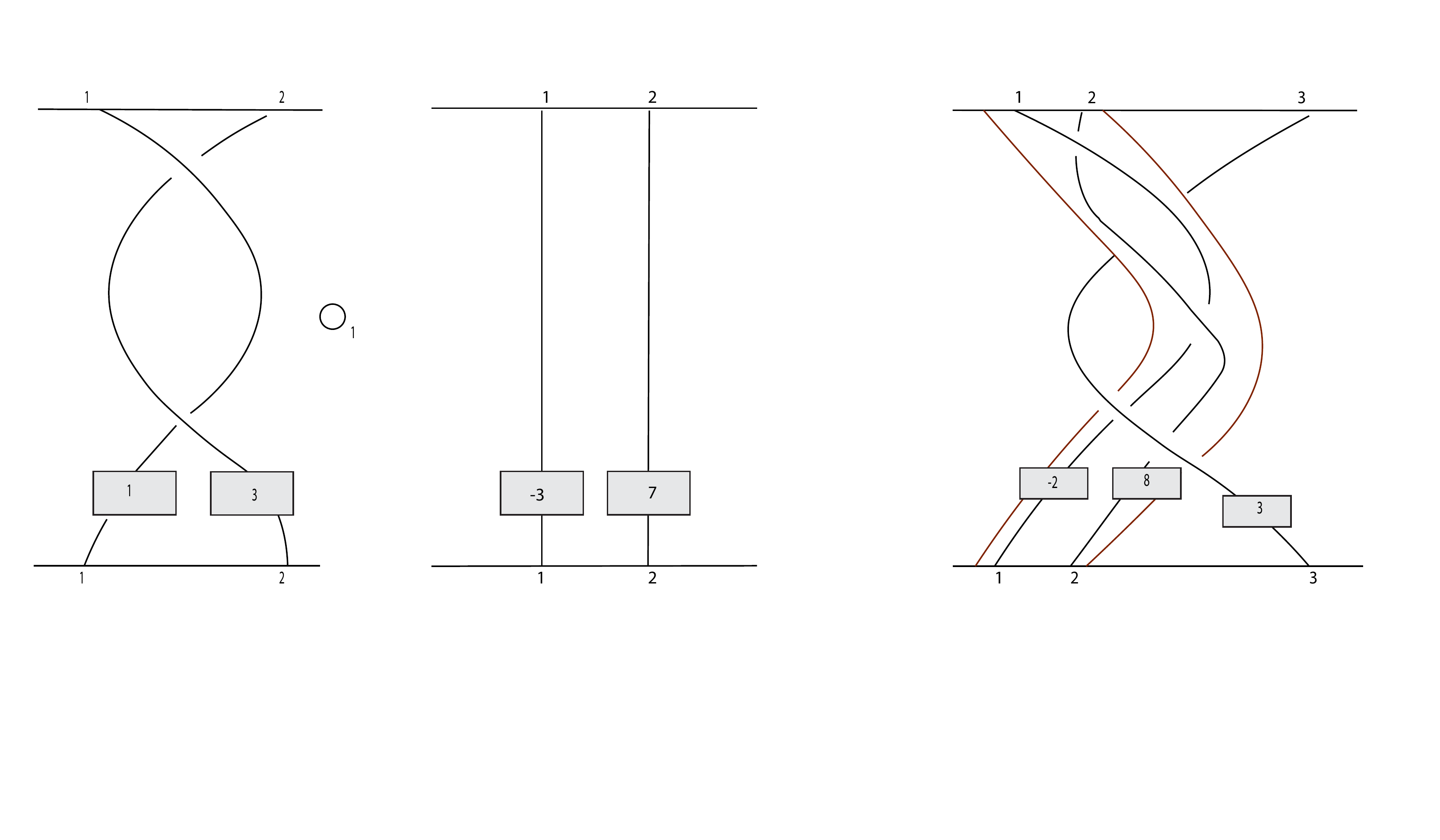}
\end{center}

\end{definition}

The operad $\corb$ is a model for the genus zero surface operad and, equivalently, the framed little $2$-disks operad whose definitions we now recall. 

Let $F_{0,n+1}$ denote an oriented surface of genus zero and with $n+1$ boundary components. We choose one of these boundary components and call it marked and we say the other $n$ components are free. We also assume each boundary component $\partial_i$ comes equipped with a collar, i.e. a neat embedding of $[0, \epsilon)\times  S^1$ in a neighborhood of $\partial_i$.  The mapping class group $\Gamma_{0}^{n+1}$ is the group of isotopy classes of orientation preserving diffeomorphisms of $F_{0,n+1}$ which fix collars pointwise. By gluing the boundary components of surfaces one obtains composition maps, but these are not strictly associative nor unital. To fix the issue, we recall a construction of Tillmann~\cite[Construction 2.2]{tillmann}, and a later improvement by Wahl \cite[3.1]{wahlinfiniteloop}, which replaces the mapping class group $\Gamma_{0}^{n+1}$ with an equivalent connected groupoid. 

We begin by defining a groupoid $\mathcal{E}_{n+1}$ whose objects are surfaces with $n$ free boundary components having a particular decomposition into standard pairs of pants $P$ and standard disks $D$. By a standard pair of pants we mean a fixed pair of pants with (variable) collars at each boundary component and an assigned ordering of the free boundary components. A morphism in $\mathcal{E}_{n+1}$ is an isotopy class of diffeomorphisms that preserves the boundary collars and their ordering. By gluing the marked boundary component of a surface to the $i^{th}$ free boundary component of another surface, one obtains composition maps $\circ_i : \mathcal{E}_{n+1} \times \mathcal{E}_{m+1} \to \mathcal{E}_{n+m}$. These are associative, but still not unital. Moreover, there are now too many objects in $\mathcal{E}_{n+1}$, so Tillmann identifies two such surfaces if one can be obtained from the other by replacing a subsurface of the form $P \circ_1 P$ by a subsurface of the form $P \circ_2 P$. To have a strict unit, Tillmann then introduces a second identification by declaring two surfaces equal if one can be obtained from the other by replacing a subsurface of the form $P \circ_1 D$ or $P \circ_2 D$ by a circle. After making these two identifications, any object has a unique representative as a surface having no subsurfaces of the form $P \circ_2 P$, $P \circ_1 D$ or $P \circ_2 D$. For $n \neq 1$, let $\mathcal{S}_n$ denote the full subgroupoid of $\mathcal{E}_n$ spanned by these special surfaces. As explained by Wahl, it is a consequence of the Alexander trick that there is a canonical way to, given a morphism in $\mathcal{E}_n$, produce a morphism in $\mathcal{S}_n$ such that the resulting maps
\[
\circ_i : \mathcal{S}_{n+1} \times \mathcal{S}_{m+1} \to \mathcal{E}_{n+1} \to \mathcal{S}_{n+m}
\]
are associative. Now we can define the surface operad.

\begin{definition} \label{def:surface_operad}\cite[2.3]{tillmann}\cite[3.1]{wahlinfiniteloop} The surface groupoids $\mathcal{S}_{+}(n)$ are defined by
\begin{itemize} 
\item $\mathcal{S}_{+}(0)$ is the trivial groupoid, whose only object is the standard disk $D$,
\item $\mathcal{S}_{+}(1)$ is a groupoid with one object $S^1$ and with $\mathbb{Z}$ as morphisms (thought of as the Dehn twists around that circle) and
\item  for $n\geq 2$, $\mathcal{S}_{+}(n)$ is the groupoid $\mathcal{S}_n$ defined above. Namely, an object in $\mathcal{S}_{+}(n)$ is a surface having no subsurfaces of the form $P \circ_2 P$, $P \circ_1 D$ or $P \circ_2 D$, together with an ordering of the boundary components and a choice of collar around each. Morphisms are isotopy classes of orientation preserving diffeomorphisms that fix the collars and their ordering.
\end{itemize}
The groupoids $\mathcal{S}_{+}(n)$ assemble into an operad. The operadic composition maps $\circ_i$ in $\mathcal{S}_+$ are induced from $\mathcal{E}$ as explained above. When no arity one operations are involved, this corresponds to the gluing of surfaces at the object-set level. Composition with arity zero operations is essentially filling in a boundary component of a surface. Composition with arity one operations maps is more subtle. When $n > 1$, the composition map \[\circ_{i}:\mathcal{S}_{+}(n)\times \mathcal{S}_{+}(1) \to \mathcal{S}_{+}(n)\] sends a surface to itself, but changes the diffeomorphism on the collar of the relevant free boundary component by a Dehn twist. When $n = 1$, the composition corresponds to addition of integers.

The space $B\mathcal{S}_{+}(n)$ is homotopy equivalent to $B\Gamma_{0}^{n+1}$ and we define the unital genus zero surface operad $\mathcal{M}_{+}$ to be the operad $$\mathcal{M}_{+}(n)=B\mathcal{S}_{+}(n)\simeq B\Gamma_{0}^{n+1}\; .$$ 

The nonunital variant of $\mathcal{S}_{+}$ is defined as 
$$\mathcal{S}(n)=\begin{cases}\emptyset \ \textnormal{for} \ n = 0 \\ \mathcal{S}_+(n) \ \textnormal{for all} \ n>0 \end{cases}$$
and the genus zero surface operad $\mathcal{M}$ as $$\mathcal{M}(n)=B\mathcal{S}(n)\simeq B\Gamma_{0}^{n+1} \; .$$

\end{definition} 

\begin{remark}
There exists a canonical isomorphism between the groupoids $\mathcal{S}(n)$ and $\corb(n)$ which extends to an isomorphism of operads. We alternatively could have defined the genus zero surface operad $\mathcal{M}$ as $B\corb$.
\end{remark}

\begin{definition}
The framed little $2$-disks operad $\mathsf{FD}$ is an operad given in positive arity $n$ by the space of all smooth, orientation-preserving embeddings of the disjoint union of $n$ disks into a single disk (see, for example, \cite[pg. 20]{GetzlerFD} ). We define $\mathsf{FD}(0)$ to be the empty space. 
\end{definition}

By remembering where the center of each disk goes and the value of the derivative at each of those centers, the space $\mathsf{FD}(n)$ is homotopy equivalent to the space of ordered configurations of $n$ points in the disk each equipped with a label in $S^1$. This space is itself homotopy equivalent to $B\prb(n)$. 

\begin{prop}\cite[Prop 1.3.14]{wahlthesis}\label{wahlprop}
The classifying space of the operad $\corb$ is weakly equivalent to the operad $\mathsf{FD}$. 
\end{prop}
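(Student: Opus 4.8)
The plan is to reduce the statement to a comparison of operads in groupoids and then to an explicit check on generators. Since the excerpt already records that each space $\mathsf{FD}(n)$ is homotopy equivalent to $B\prb(n)$, every space $\mathsf{FD}(n)$ is aspherical, in fact a connected $K(\prb(n),1)$. Applying the fundamental groupoid functor $\pi$ levelwise to $\mathsf{FD}$ and using that $\pi$ preserves products, we obtain an operad in groupoids $\pi\mathsf{FD}$, and the unit of the adjunction $\pi\dashv B$ gives a map of operads $\mathsf{FD}\to B\pi\mathsf{FD}$ which is a levelwise weak equivalence precisely because each $\mathsf{FD}(n)$ is aspherical. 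It therefore suffices to produce a weak equivalence of operads in groupoids between $\corb$ and $\pi\mathsf{FD}$ and then apply $B$.

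First I would record the arity-wise picture. The groupoid $\corb(n)$ is connected, since the underlying braid of a morphism from $p$ to $q$ can realise any permutation $qp^{-1}$, and the automorphism group of any object is the group of pairs $(\gamma,[x_1,\dots,x_n])$ with $\gamma\in\pb(n)$, which is exactly $\pb(n)\times\mathbb{Z}^n\cong\prb(n)$. Likewise $\pi\mathsf{FD}(n)$ is connected with fundamental group $\prb(n)$. Thus the two groupoids are abstractly equivalent in each arity; the whole content of the proposition is the compatibility of such an equivalence with the operadic structure.

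To exhibit the comparison I would build a map of operads in groupoids $\Phi\colon\corb\to\pi\mathsf{FD}$. On objects, the underlying object operad of $\corb$ is the symmetric operad $\Sigma$, and I would send a permutation $p\in\Sigma_n$ to a standard configuration of $n$ framed points placed at fixed positions and relabelled by $p$, choosing the positions so that operadic insertion of standard configurations is canonically isotopic to a standard configuration. On morphisms I would send the elementary braid $\boldsymbol{\beta}_i$ to the homotopy class of the half-twist interchanging the $i$-th and $(i+1)$-st points, and the generator $\boldsymbol{\tau}_j$ to the loop rotating the framing of the $j$-th point by $2\pi$. Checking that this respects categorical composition is precisely the standard presentation $\pi_1\mathsf{FD}(n)\cong\prb(n)$: the Artin relations, the relations $\boldsymbol{\beta}_i\boldsymbol{\tau}_{i+1}=\boldsymbol{\tau}_i\boldsymbol{\beta}_i$ and the commutativity of the twists all hold geometrically. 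Since $\Phi$ is then bijective on $\pi_0$ and an isomorphism on automorphism groups, it is a levelwise equivalence of groupoids.

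The main obstacle is verifying that $\Phi$ respects the operadic composition, and in particular reproducing the twisting factor $\boldsymbol{R}_m^{x_k}$ appearing in the formula $\omega=\gamma\circ_k((\boldsymbol{R}_m)^{x_k}\cdot\alpha)$. Geometrically, operadic composition in $\mathsf{FD}$ inserts an $m$-point framed configuration into the $k$-th disk of an $n$-point configuration; a morphism twisting the framing of that $k$-th slot by $x_k$ full turns drags the entire inserted configuration around by $2\pi x_k$, and rotating a configuration of $m$ points by a full turn is exactly the full twist $\boldsymbol{R}_m=(\boldsymbol{\beta}_1\cdots\boldsymbol{\beta}_{m-1})^m$ in $\pb(m)$. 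This is the conceptual heart of the comparison and the one place where one must translate the combinatorial composition law of $\corb$ into the geometry of framed disks. A secondary technical point is the rigidification of the chosen basepoint configurations: operadic insertion preserves them only up to canonical isotopy, so either the standard positions must be chosen with care or the map must be presented as a zigzag through a groupoid with more objects; either way this does not affect the homotopical conclusion. Having checked these two compatibilities, $\Phi$ is a weak equivalence of operads in groupoids, and applying $B$ together with the equivalence $\mathsf{FD}\simeq B\pi\mathsf{FD}$ yields the asserted weak equivalence $B\corb\simeq\mathsf{FD}$.
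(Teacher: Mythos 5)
Your overall strategy (reduce to groupoids using that each $\mathsf{FD}(n)$ is aspherical, then compare fundamental groupoid operads, with the full twist $\boldsymbol{R}_m$ as the key geometric input) is reasonable, but the central construction cannot exist as stated, and the issue you set aside as a ``secondary technical point'' is in fact fatal to it. There is no strict map of operads $\Phi\colon\corb\to\pi\mathsf{FD}$ sending each permutation to a chosen ``standard'' configuration, no matter how carefully the positions are chosen. Indeed, the object operad of $\corb$ is the associative operad $\Sigma$, in which $id_2\circ_1 id_2=id_3=id_2\circ_2 id_2$; strictness would force
\[
\Phi(id_2)\circ_1\Phi(id_2)=\Phi(id_2)\circ_2\Phi(id_2)
\]
in $\mathsf{FD}(3)$. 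Writing $\Phi(id_2)=(f_1,f_2)$ as an embedding of two disks, comparing the first coordinates of the two composites gives $f_1\circ f_1=f_1$, and injectivity of $f_1$ then forces $f_1=\mathrm{id}_D$, contradicting the fact that $f_1$ must have image disjoint from that of $f_2$. So ``choosing the positions with care'' cannot rescue a strict $\Phi$; and since $\corb$ is \emph{not} cofibrant in $\Op(\G)$ (its object operad $\Sigma$ is not free, cf.\ Proposition \ref{prop: free op gp}), you cannot silently promote a map defined ``up to canonical isotopy'' to a zigzag of genuine operad maps. Producing that zigzag is precisely the missing content.

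The repair, using the machinery of this paper, is to replace the source rather than the target: the operad $\parb$ has free object operad, hence is cofibrant, and $\parb\to\corb$ is a weak equivalence (Lemma \ref{lem:parbisfE2}). By Lemma \ref{lem:maps_parb}, a strict map $\parb\to\pi\mathsf{FD}$ is exactly a tuple $(m,\beta,\alpha,\tau)$: take $m$ to be any configuration of two framed disks, $\alpha$ the path between the two composite configurations, $\beta$ the half-rotation, $\tau$ the $2\pi$ rotation of a framing, and then verify the pentagon, the two hexagons, and the ribbon relation $\tau\circ_1 id_m=\beta\cdot\sigma\beta\cdot\bigl(id_m\circ(\tau,\tau)\bigr)$ --- this last verification is exactly your full-twist observation, now placed where it belongs. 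The resulting map is a levelwise equivalence (both sides are connected with automorphism group $\prb(n)$), yielding the zigzag $B\corb\xleftarrow{\sim}B\parb\to B\pi\mathsf{FD}\xleftarrow{\sim}\mathsf{FD}$. Note finally that the paper itself offers no proof of this proposition: it cites Wahl's thesis, where the argument is of a different nature, going through the recognition theory of ribbon braided operads and a Fiedorowicz-style covering/product comparison rather than through fundamental groupoid operads; so a correct write-up along your lines would in any case be a genuinely different route from the cited one.
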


\subsection{Parenthesized Ribbons} 

The goal of this section is to give a cofibrant resolution of the operad $\corb$, called the parenthesized ribbon braid operad and denoted by $\parb$.

The operad of objects of $\parb$ is the free operad generated by a single operation in arity two. We give a more concrete description below.

\begin{definition} 
Let $S$ be a finite set. We define the set of non-associative monomials of length $n$, $\bM_{n}S$, inductively as \begin{itemize} 
\item  $\bM_0 S =\emptyset$, 
\item $\bM_{1}S = S$ and
\item  $\bM_{n}S = \coprod_{p+q=n} \bM_{p}S \times \bM_{q}S.$ 
\end{itemize} 
\end{definition} 

Alternatively, $\bM_nS$ is the set of rooted, binary (i.e. each vertex has exactly two incoming edges) planar trees with $n$ leaves labelled by elements of $S$. A short-hand notation for elements $\bM_{n}S$ is as parenthesized words in $S$. For example, for $S = \{a, b,c,d\}$, the expression $(a(db))(ba)$ represents an element in $\bM_5S$.

\begin{definition}\label{magma_operad} 
Let $\mathsf{M}=\{\mathsf{M}(n)\}$ be the symmetric sequence where $\mathsf{M}(n)$ is the subset of $\bM_{n}\{1,...,n\}$ consisting of the monomials in $\{1,...,n\}$ where each element of the set occurs exactly once. The symmetric group $\Sigma_n$ acts from the right on $\mathsf{M}(n)$ by permuting the elements of the set $\{1,...,n\}$. The symmetric sequence $\mathsf{M}$ becomes an operad with operadic composition given by replacing letters by monomials (or grafting binary trees). For example, we have the following composition 
\[(1(34))(25) \circ_4 (13)2=(1(3((46)5)))(27)\,.\]
The operad $\mathsf{M}$ is called the magma operad. 
\end{definition} 

There is an obvious operad map
$u : \mathsf{M} \to \Sigma$
which forgets the parenthesization.

\begin{definition}\label{def:PaB} The operad of parenthesized braids $\pab$ is the operad in groupoids defined as follows.
\begin{itemize} 
\item The operad of objects is the magma operad, i.e. $\ob(\pab)=\mathsf{M}$. 
\item  For each $n\geq0$, the morphisms of the groupoid $\pab(n)$ are morphisms in $\cob(n)$, $$ \Hom_{\pab(n)}(p,q)=\Hom_{\cob(n)}(u(p),u(q)).$$
\end{itemize} 

The collection of groupoids $\{\pab(n)\}_{n\geq0}$ forms an operad. On objects, it has the operad structure of $\mathsf{M}$ and on morphisms that of $\cob$.

\end{definition}

There is also a ribbon version of $\pab$.

\begin{definition}\label{def:PaRB} The operad of parenthesized ribbon braids $\parb$ is the operad in groupoids defined as follows. 

\begin{itemize} 

\item The operad of objects is the magma operad, i.e. $\ob(\parb)=\mathsf{M}$. 
\item For each $n\geq0$, the morphisms of the groupoid $\parb(n)$ are morphisms in $\corb(n)$, $$ \Hom_{\parb(n)}(p,q)=\Hom_{\corb(n)}(u(p),u(q)).$$ 
\end{itemize} 

The collection of groupoids $\{\parb(n)\}_{n\geq0}$ forms an operad in groupoids. On objects, it has the operad structure of $\mathsf{M}$ and on morphisms that of $\corb$.

\end{definition} 

Recall that a map $\mathsf{P}\rightarrow \mathsf{Q}$ of operads in groupoids is a weak equivalence if aritywise $\mathsf{P}(n)\rightarrow \mathsf{Q}(n)$ is a weak equivalence of groupoids.

\begin{lemma}\label{lem:parbisfE2}
The forgetful map $\parb \to \corb$ is a weak equivalence of operads in groupoids. Therefore, $\parb$ is also a model for the genus zero surface operad.
\end{lemma}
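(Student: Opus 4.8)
The plan is to prove the statement one arity at a time. Recall that weak equivalences in $\G$ are precisely the equivalences of categories, so it suffices to show that for each $n$ the forgetful functor $F_n : \parb(n) \to \corb(n)$ is an equivalence of groupoids, i.e. that it is fully faithful and essentially surjective. Before doing this I would first make sure that the forgetful map is a genuine map of operads: on objects it is the operad map $u : \mathsf{M} \to \Sigma$ of Definition~\ref{magma_operad}, while on morphisms it is the identity, and these are compatible with $\circ_i$ precisely because the operadic composition on the morphisms of $\parb$ is by definition inherited from that of $\corb$ (Definition~\ref{def:PaRB}).

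The full faithfulness of $F_n$ is immediate and is essentially built into the definition. Indeed, $F_n$ sends an object $p \in \mathsf{M}(n)$ to the underlying permutation $u(p) \in \Sigma_n = \ob(\corb(n))$, and by Definition~\ref{def:PaRB} we have $\Hom_{\parb(n)}(p,q) = \Hom_{\corb(n)}(u(p),u(q))$, with $F_n$ acting as the identity on these hom-sets. Thus $F_n$ induces a bijection on morphisms and is fully faithful.

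For essential surjectivity the key observation is that $u : \mathsf{M}(n) \to \Sigma_n$ is surjective on objects: every permutation $\sigma \in \Sigma_n$ is the underlying word of some parenthesized monomial (for instance the left-normalized bracketing), so every object of $\corb(n)$ lies literally in the image of $F_n$. Hence $F_n$ is surjective on objects, and in particular essentially surjective. The arity $n=0$ case is trivial since both $\parb(0)$ and $\corb(0)$ are empty. Combining full faithfulness with essential surjectivity, each $F_n$ is an equivalence of categories, hence a weak equivalence of groupoids, and therefore $\parb \to \corb$ is a weak equivalence of operads in groupoids.

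I do not expect any genuine obstacle here: the statement is essentially a formal unwinding of the definitions, and the only two points requiring a word of justification are the surjectivity of $u$ on objects (used for essential surjectivity) and the verification that $F$ respects operadic composition. The final sentence of the lemma then follows for free, since $\corb$ has already been identified as a model for the genus zero surface operad $\mathcal{M}$ (via the canonical isomorphism $\mathcal{S}(n) \cong \corb(n)$), and a weak equivalence of operads transports this property to $\parb$.
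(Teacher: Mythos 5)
Your proof is correct and is essentially the same as the paper's, which simply observes that the forgetful map is surjective on object sets in each arity and bijective on morphisms by construction. Your write-up just spells out in more detail what the paper leaves implicit (the operad-map check, full faithfulness from the definition of $\Hom_{\parb(n)}$, and essential surjectivity via the surjectivity of $u : \mathsf{M}(n) \to \Sigma_n$).
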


\begin{proof}
The map is surjective on object sets for each arity $n$, and it is bijective on morphisms by construction.
\end{proof}

\begin{cor}
$\parb$ is a cofibrant replacement of $\corb$. 
\end{cor}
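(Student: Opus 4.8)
The plan is to deduce this corollary directly from the two results that immediately precede it, namely Lemma~\ref{lem:parbisfE2} and Proposition~\ref{prop: free op gp}. The statement that $\parb$ is a cofibrant replacement of $\corb$ requires two things: first, that the forgetful map $\parb \to \corb$ is a weak equivalence, and second, that $\parb$ is a cofibrant object in $\Op(\mathbf{G})$. The first of these is precisely the content of Lemma~\ref{lem:parbisfE2}, so nothing more needs to be said about it. The work, such as it is, concerns cofibrancy.

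For the cofibrancy of $\parb$, I would invoke the second sentence of Proposition~\ref{prop: free op gp}: any operad $\mathsf{P}$ in groupoids whose operad of objects $\ob(\mathsf{P})$ is free as an operad in $\Set$ is automatically cofibrant in $\Op(\mathbf{G})$. Thus the only thing to verify is that $\ob(\parb)$ is a free operad in sets. But by Definition~\ref{def:PaRB}, the operad of objects of $\parb$ is the magma operad $\mathsf{M}$, and the magma operad is by construction the free operad on a single binary generator (as remarked just before Definition~\ref{def:PaB}, the operad of objects of $\parb$ is ``the free operad generated by a single operation in arity two''). Hence $\ob(\parb) = \mathsf{M}$ is free, and Proposition~\ref{prop: free op gp} applies to give cofibrancy immediately.

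Combining these, $\parb$ is cofibrant and the map $\parb \to \corb$ is a weak equivalence, which is exactly the assertion that $\parb$ is a cofibrant replacement of $\corb$. I do not anticipate a genuine obstacle here: the corollary is essentially a bookkeeping consequence of assembling the freeness of $\mathsf{M}$ with the cofibrancy criterion and the weak equivalence already established. The only point that deserves a sentence of care is the identification $\ob(\parb) = \mathsf{M}$ together with the freeness of $\mathsf{M}$; once that is recorded, everything else is formal. One might also note that since every object of $\mathbf{G}$ is both fibrant and cofibrant, $\corb$ is automatically fibrant, so $\parb \to \corb$ is legitimately a cofibrant replacement (a weak equivalence from a cofibrant object to the given object) without any further fibrancy concerns.
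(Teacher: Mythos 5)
Your proof is correct and follows essentially the same route as the paper: the weak equivalence is Lemma~\ref{lem:parbisfE2}, and cofibrancy follows from Proposition~\ref{prop: free op gp} because $\ob(\parb)=\mathsf{M}$ is the free operad on one arity-two generator. The extra remark about fibrancy of $\corb$ is a harmless (and reasonable) addition that the paper leaves implicit.
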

\begin{proof}
From Proposition \ref{prop: free op gp} we know that an operad $\mathsf{P}$ in groupoids is cofibrant if $\ob(\mathsf{P})$ is free. The magma operad $\mathsf{M}$ (Definition~\ref{magma_operad}) is free on one operation of arity $2$, therefore the result follows.
\end{proof}

\section{Operad maps out of $\parb$}\label{maps_parb}

Throughout this section, $\mathsf{P}$ is a fixed operad in groupoids. For $\sigma \in \Sigma_m$ and $x$ an object or morphism in $\mathsf{P}(m)$, we write $\sigma x$ for the action of $\sigma$ on $x$.

\begin{lemma}\label{lem:maps_pab}The set of operad maps from $\pab$ to $\mathsf{P}$ is identified with the set of triples $(m,\beta, \alpha)$ where $m \in \ob \mathsf{P}(2)$, $\beta$ is a morphism in $\mathsf{P}(2)$ from $m$ to $\sigma m$ where $\sigma = (21)$ is the non-trivial element in $\Sigma_2$, and $\alpha$ is a morphism in $\mathsf{P}(3)$ between $m \circ_1 m$ and $m \circ_2 m$. These triples are subject to the pentagon and hexagon relations spelled out below.
\end{lemma}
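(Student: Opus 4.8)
The plan is to use the fact that $\pab$ is a particularly economical operad: its operad of objects is the free operad $\mathsf{M}$ on a single binary operation, and all of its morphisms are generated, under operadic and categorical composition, by a small number of elementary braids. Since a map of operads $f : \pab \to \mathsf{P}$ is determined by its value on generators, I would first extract the data that such an $f$ must specify and then identify the relations these data must satisfy. On objects, because $\mathsf{M}$ is freely generated by the single arity-two monomial, the underlying map $\ob(f) : \mathsf{M} \to \ob\mathsf{P}$ is determined by a single choice $m := \ob(f)((12)) \in \ob\mathsf{P}(2)$, with no constraints (freeness of $\mathsf{M}$, via Proposition~\ref{prop: free op gp}, is exactly what makes this a free choice). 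This accounts for the first entry $m$ of the triple.

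Next I would analyze the morphism data. By the remark following Definition~\ref{cob}, every morphism in $\cob(n)$ — and hence in $\pab(n)$ — is a categorical composite of operadic composites of identities with the single generating braid $\boldsymbol{\beta} \in \cob(2)$. Therefore $f$ is determined on morphisms once we know the image of $\boldsymbol{\beta}$, viewed as a morphism in $\pab(2)$ from the object $(12)$ to the object $(21) = \sigma(12)$; call this image $\beta$, a morphism in $\mathsf{P}(2)$ from $m$ to $\sigma m$. This is the second entry. The subtlety is that $\pab$ is the \emph{parenthesized} braid operad, so in arities $\geq 3$ one must also record how $f$ treats the change of parenthesization. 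Concretely, there is a canonical ``associativity'' isomorphism in $\pab(3)$ between the two bracketings $(12)3$ and $1(23)$ of the identity permutation, whose image I would call $\alpha$, a morphism in $\mathsf{P}(3)$ from $m\circ_1 m$ to $m \circ_2 m$. This is the third entry. The content of the lemma is that $(m,\beta,\alpha)$ determines $f$ and that these three pieces of data suffice.

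The main work is then to determine precisely which relations $(m,\beta,\alpha)$ must satisfy for the assignment to extend to a well-defined operad map, and to verify that no further relations are needed. The standard fact — originally due to work surrounding the Drinfeld associator and parenthesized braids — is that $\pab$ is presented as an operad by the generators $\beta$ (the braiding) and $\alpha$ (the associator), subject exactly to one \emph{pentagon} relation (a commuting pentagon of associativity isomorphisms in arity $4$, expressing Mac Lane coherence) and two \emph{hexagon} relations (compatibility between the braiding and the associator in arity $3$). I would invoke this presentation to conclude that the data of a map $\pab \to \mathsf{P}$ is exactly a triple $(m,\beta,\alpha)$ satisfying the images of these relations in $\mathsf{P}$, which are the pentagon and hexagon equations referred to in the statement. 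The hard part will be justifying that these are a \emph{complete} set of relations: that any braided/parenthesized relation in $\pab$ is a formal consequence of pentagon and hexagon. Rather than reprove this from scratch, I would appeal to the known presentation of the parenthesized braid operad (so that checking an operad map amounts to checking the generating relations), and separately verify directly that the pentagon and hexagon equations displayed below are forced by examining the relevant arity-$3$ and arity-$4$ groupoids of $\pab$, where the braid relations unwind into exactly these coherence identities.
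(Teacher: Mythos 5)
Your proposal matches the paper's proof in substance: the paper likewise extracts the triple $(m,\beta,\alpha)$ as the images of the object $(12)$ and of the braiding and associator morphisms (the easy direction), and then defers the completeness of the pentagon and hexagon relations to the known presentation of $\pab$ — citing \cite[Theorem 6.2.4]{FresseBook1} and a version of Mac Lane's coherence theorem — exactly as you propose to do by appealing to the established presentation rather than reproving it. Your added observations (freeness of $\mathsf{M}$ pinning down the object-level map, and the need for the associator to handle reparenthesization in arities $\geq 3$) are correct and consistent with that argument.
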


The hexagon relations state that the diagrams

\begin{equation}\label{hexigon_rel_1} 
\xymatrix{ & \ar[dl]_{m\circ_{1}\beta} m \circ_{1}m  \ar[dr]^-{\alpha} & \\
(213) \cdot m\circ_{1} m \ar[d]_{(213)\alpha} &&  m \circ_{2} m \ar[d]^-{\beta \circ_{2} m}\\
(213) \cdot m\circ_{2}m \ar[dr]_{(213) m\circ_{2} \beta} && (231) \cdot m \circ_1 m \ar[dl]^-{(231) \alpha} \\
& (231) \cdot m \circ_2 m &
}
\end{equation} 

and

\begin{equation}\label{hexigon_rel_2} 
\xymatrix{ 
& \ar[dl]_{m\circ_{2} \beta} m\circ_{2} m \ar[dr]^-{\alpha^{-1}} & \\
(132) \cdot m\circ_{2}m \ar[d]_{(132)\alpha^{-1}} &&  m\circ_{1} m \ar[d]^-{\beta\circ_{1} m}\\
(132) \cdot m\circ_{1} m \ar[dr]_{(132) m\circ_{1}\beta } && (312) \cdot m\circ_{2} m \ar[dl]^-{(312)\alpha^{-1}} \\
& (312) \cdot m\circ_{1} m &
}
\end{equation}
commute in $\mathsf{P}(3)$.

The pentagon relation states that the diagram
\begin{equation}
\xymatrix{ 
& \ar[dl]_{id\circ_1\alpha} ((m\circ_1m)\circ_1m) \ar[dr]^-{\alpha\circ_1 id}& \\
((m\circ_2 m)\circ_1 m) \ar[d]_{\alpha \circ_2 id} && m\circ ( m, m)\ar[ddl]^-{\alpha \circ_3 id} \\
((m\circ_1 m)\circ_2 m)\ar[dr]_{id\circ_2\alpha} &&  \\
& ((m\circ_2 m)\circ_2 m) & }
\end{equation} 
commutes in $\mathsf{P}(4)$.

\begin{proof}
This is proved in detail in \cite[Theorem 6.2.4]{FresseBook1}. One implication is easy: a map of operads from $\pab$ to $\mathsf{P}$ determines such a triple $(m, \beta, \alpha)$, where $m$ is the image of the object $(12) \in \pab(2)$, and $\beta$ and $\alpha$ (called the \emph{braiding} and the \emph{associator}, respectively) are the images of the morphisms pictured in Figure \ref{pic-gen}. The reverse implication involves a version of the coherence theorem of Mac Lane.
\end{proof}

In preparation for the definition of the Grothendieck-Teichm\"{u}ller group, we recall some standard notations. Let $Y$ be a profinite group and let $\alpha, \beta$ be elements of $Y$. Let $f$ be an element of $\widehat{\mathbb{F}}_2$, the profinite completion of the free group on two generators $x$ and $y$. Let $\sigma : \mathbb{F}_2 \to Y$ be a homomorphism defined by $\sigma(x) = \alpha$ and $\sigma(y) = \beta$. Then, we write $f(\alpha, \beta)$ for the image of $f$ under $\sigma$. (By the universal property of profinite completion, to specify a map from a group $G$ to a profinite group $Y$ is equivalent to specifying a map from the profinite completion of $G$ to $Y$.)

For $1 \leq i < j \leq n$, we follow common practice and denote by $x_{ij}$ the element of the pure braid group $\pb(n)$ given by $(\boldsymbol{\beta}_{j-1} \dots \boldsymbol{\beta}_{i+1})\boldsymbol{\beta}_{j-1}^2(\boldsymbol{\beta}_{j-1} \dots \boldsymbol{\beta}_{i+1})^{-1}$.

\begin{definition}\label{defn:GT}
The \emph{Grothendieck-Teichm\"{u}ller monoid} $\ugt$ is the monoid of endomorphisms $\sigma$ of $\widehat{\mathbb{F}}_2$ of the form
\[
\sigma(x) = x^{\lambda} \quad , \quad
\sigma(y) = f^{-1} y^{\lambda}f 
\]
for some $(\lambda, f) \in \widehat{\mathbb{Z}} \times \widehat{\mathbb{F}_2}$ satisfying the following equations:
\begin{enumerate}
\item[(I)] $f(x,y)f(y,x)=1$
\item[(II)] $f(z,x)z^mf(y,z)y^mf(x,y)x^m=1$, with $z = (xy)^{-1}$ and $m=(\lambda-1)/2$
\item[(III)] $f(x_{12},x_{23}x_{24})f(x_{13}x_{23},x_{34})=f(x_{23},x_{34})f(x_{12}x_{13},x_{24}x_{34})f(x_{12},x_{23})$. 
\end{enumerate}
The first two equations hold in $\widehat{\mathbb{F}_2}$ and the last equation holds in $\widehat{\pb}(4)$. 

The pair $(\lambda, f)$ is uniquely determined by $\sigma$. This follows from equation (III), which guarantees that $f$ belongs to the commutator subgroup of $\widehat{\mathbb{F}}_2$. The multiplication of two such pairs $(\lambda,f) \cdot (\mu,g)$ is given by
\[
\bigl(\lambda\mu, f(g x^{\mu}g^{-1}, y^\mu) \cdot g)\bigr) \; .
\]
The Grothendieck-Teichm\"uller group $\gt$ is the group of units of $\ugt$.

\end{definition}

The following proposition translates the definition $\ugt$ into operadic language.

\begin{prop}\label{prop:operadic definition of gt}
The monoid $\ugt$ is the monoid of endomorphisms of $\widehat{\pab}$ fixing the objects.
\end{prop}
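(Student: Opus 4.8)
The plan is to combine Lemma~\ref{lem:maps_pab} with the explicit definition of $\ugt$ (Definition~\ref{defn:GT}), exhibiting a bijection between endomorphisms of $\widehat{\pab}$ fixing the objects and the data $(\lambda, f)$ satisfying equations (I)--(III). First I would apply Lemma~\ref{lem:maps_pab} in the case $\mathsf{P} = \widehat{\pab}$: an operad map $\pab \to \widehat{\pab}$ is the same as a triple $(m, \beta, \alpha)$ subject to the pentagon and the two hexagons. By the universal property of profinite completion (and since $\widehat{\pab}$ is a strict operad in profinite groupoids by Remark~\ref{rem:Nisopgp}), such maps correspond to operad endomorphisms of $\widehat{\pab}$. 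Requiring the endomorphism to fix the objects forces $m$ to be the standard object $(12) \in \pab(2)$ and forces the image of each object to be itself; what remains free is the action on morphisms, encoded by the braiding $\beta$ and the associator $\alpha$.

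Next I would identify this morphism data with a pair $(\lambda, f)$. Since $\widehat{\pab}(2)$ has morphism group the profinite completion of $\rb(2) \cong \br(2) \ltimes \mathbb{Z}^2$, and an object-fixing endomorphism must send the elementary braiding to a power of itself, the braiding $\beta$ is determined by a single exponent $\lambda \in \widehat{\mathbb{Z}}$ (together with the constraint that the twist generators $\boldsymbol{\tau}_j$ are handled compatibly). The associator $\alpha$ lives in $\widehat{\pb}(3)$; using the standard identification of the relevant pure braid subgroup with $\widehat{\mathbb{F}}_2$ (generated by $x = x_{12}$ and $y = x_{23}$), I would write $\alpha = f(x_{12}, x_{23})$ for a unique $f \in \widehat{\mathbb{F}}_2$. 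The endomorphism then acts on the two generators of $\widehat{\mathbb{F}}_2$ exactly by $x \mapsto x^\lambda$ and $y \mapsto f^{-1} y^\lambda f$, reproducing the shape of the endomorphism $\sigma$ in Definition~\ref{defn:GT}.

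The heart of the proof is matching the coherence constraints with equations (I)--(III). I would show that the two hexagon relations \eqref{hexigon_rel_1} and \eqref{hexigon_rel_2} translate precisely into the two-variable equations (I) and (II) in $\widehat{\mathbb{F}}_2$ (these are the relations relating the braiding exponent $\lambda$ and $f$, with the parameter $m = (\lambda - 1)/2$ arising from the ribbon/twist contribution $\boldsymbol{R}$), while the pentagon relation translates into the three-variable pentagon equation (III) in $\widehat{\pb}(4)$. Conversely, any $(\lambda, f)$ satisfying (I)--(III) yields a consistent triple $(m, \beta, \alpha)$ and hence an object-fixing endomorphism. Finally, I would verify that the monoid structures agree: composing two object-fixing endomorphisms corresponds to composing the associated pairs, and a direct computation of how the associators and braidings compose should recover the multiplication $(\lambda, f) \cdot (\mu, g) = (\lambda\mu,\, f(g x^\mu g^{-1}, y^\mu)\, g)$ of Definition~\ref{defn:GT}.

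The main obstacle I anticipate is the faithful bookkeeping in the ribbon setting: unlike the purely braided case treated classically by Drinfel'd and Fresse, here the associator and braiding live in the ribbon groups, so the twist generators $\boldsymbol{\tau}_j$ and the central element $\boldsymbol{R}_m$ intervene in the operadic composition (as in Definition~\ref{corb}). I would need to check carefully that fixing the objects still pins down the $\boldsymbol{\tau}$-data (so that the endomorphism is genuinely determined by $(\lambda, f)$ alone and no extra parameter for the twists survives), and that the hexagon translation still produces exactly equations (I) and (II) with the correct power $m = (\lambda-1)/2$ coming from $\boldsymbol{R}$. I expect that the map $\rb(n) \to \br(n)$ splitting, together with the decomposition $\prb(n) \cong \pb(n) \times \mathbb{Z}^n$, lets the twist contributions be tracked separately and shown to impose no new relations beyond those already captured, so that the comparison with $\widehat{\pab}$ (rather than its ribbon analogue) is the essential point rather than a genuine extra difficulty.
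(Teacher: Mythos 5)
Your overall skeleton does match the paper's proof: apply Lemma~\ref{lem:maps_pab} with $\mathsf{P}=\widehat{\pab}$, then invoke Drinfel'd's computation \cite[Section 4]{drinfeld} that the two hexagons are equivalent to equations (I)--(II) and the pentagon to equation (III). However, your execution rests on a misreading of which operad the proposition is about. The proposition concerns $\widehat{\pab}$, the profinite completion of the \emph{parenthesized braid} operad: by Definition~\ref{def:PaB} its morphisms are morphisms of $\cob$, i.e.\ plain braids. There are no twist generators $\boldsymbol{\tau}_j$, no elements $\boldsymbol{R}_m$, and the morphisms of $\widehat{\pab}(2)$ are \emph{not} the profinite completion of $\rb(2)\cong\br(2)\ltimes\mathbb{Z}^2$ --- they are completed braids on two strands, so the braiding is a single element of (a torsor under) $\widehat{\mathbb{Z}}$ with no further constraint. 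All of the ribbon bookkeeping you flag as the ``main obstacle'' belongs to $\parb$, which the paper treats separately (Lemma~\ref{lem:maps_parb} and Proposition~\ref{prop: endomorphisms fixing the objects}); it plays no role in this proposition. In particular, your claim that the exponent $m=(\lambda-1)/2$ in equation (II) ``arises from the ribbon/twist contribution $\boldsymbol{R}$'' is wrong: that exponent is already present in Drinfel'd's purely braided hexagon relations, with no ribbon structure in sight.

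There is also a genuine mathematical gap that survives even after stripping out the ribbon confusion. The associator $\alpha$ is a morphism $(12)3\to 1(23)$ in $\widehat{\pab}(3)$, hence an element of a torsor under $\widehat{\pb(3)}$, and $\pb(3)\cong\mathbb{F}_2\times\mathbb{Z}$ where the $\mathbb{Z}$ factor is the center generated by the full twist; so a priori $\alpha$ is a pair $(n,f)\in\widehat{\mathbb{Z}}\times\widehat{\mathbb{F}}_2$, not an element of $\widehat{\mathbb{F}}_2$. When you write $\alpha=f(x_{12},x_{23})$ ``for a unique $f\in\widehat{\mathbb{F}}_2$'' you are silently assuming that the central coordinate $n$ vanishes. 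This is not automatic: it must be deduced from the hexagon relations, and this is precisely the step the paper makes explicit (``the hexagon relations force $n$ to be $0$,'' with the argument spelled out in the proof of Proposition~\ref{prop: plus end to ho}, where one applies the strand-forgetting maps $\partial_i:\pb(3)\to\pb(2)$ to a hexagon identity). Without this step the correspondence between object-fixing endomorphisms and pairs $(\lambda,f)$ satisfying (I)--(III), and hence the identification with $\ugt$ of Definition~\ref{defn:GT}, is not established.
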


\begin{proof}
(c.f. \cite[Proposition 11.1.3-11.3.4]{FresseBook1})
By Lemma \ref{lem:maps_pab}, an endomorphism of $\widehat{\pab}$ fixing the objects is uniquely specified by pair $(\beta, \alpha)$, where $\beta$ is a morphism in $\widehat{\pab}(2)$ from $(12)$ to $(21)$ and $\alpha$ is a morphism in $\widehat{\pab}(3)$ from $(12)3$ to $1(23)$, i.e. $\beta \in \widehat{\mathbb{Z}}$ and $\alpha = (n, f) \in \widehat{\mathbb{Z}} \times  \widehat{\mathbb{F}}_2$. The pair $(\beta, \alpha)$ is subject to the hexagon and pentagon relations. The hexagon relations force $n$ to be $0$ (c.f. the proof of Proposition \ref{prop: plus end to ho}). Drinfel'd shows in \cite[Section 4]{drinfeld} that equations (I) and (II) taken together are equivalent to both hexagon relations, and that equation (III) is equivalent to the pentagon relation.
\end{proof}

The goal of this section is to prove the following lemma. As in Definition~\ref{corb}, we write $\cdot$ for categorical composition.

\begin{lemma}\label{lem:maps_parb}
The set of operad maps from $\parb$ to $\mathsf{P}$ is identified with the set of pairs $(g, \tau)$ where $g = (m,\beta,\alpha)$ is an operad map from $\pab$ to $\mathsf{P}$ and $\tau$ is a morphism in $\mathsf{P}(1)$, subject to the relation that the image of $(\tau, id)$ under the map
\[
\circ_1 : \mor \mathsf{P}(1) \times \mor \mathsf{P}(2) \to \mor \mathsf{P}(2)
\]
agrees with the categorical composition $\beta \cdot \sigma\beta \cdot (id \circ (\tau,\tau))$, where $id \circ (\tau,\tau)$ is the image of $(id, \tau,\tau)$ under the operadic composition map
\[
\mor \mathsf{P}(2) \times \mor \mathsf{P}(1) \times \mor \mathsf{P}(1) \xrightarrow{} \mor \mathsf{P}(2) \; .
\]
\end{lemma}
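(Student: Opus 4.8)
The plan is to present $\parb$ by generators and relations as an operad in groupoids and then read off the data of an operad map. First I would record the object-level data: since $\ob(\parb)=\ob(\pab)=\mathsf{M}$ is the free magma operad (Proposition~\ref{prop: free op gp}), an operad map is determined on objects by the image $m\in\ob\mathsf{P}(2)$ of the generating object $(12)$, exactly as for $\pab$. For morphisms I would use the splitting $\rb(n)\cong\br(n)\ltimes\mathbb{Z}^n$ from Section~\ref{ribbon_braid}: the braid factor $\br(n)$ is generated by the elementary braids $\boldsymbol{\beta}_i$, which are operadic composites of identities and the arity-two braiding $\beta$ (as in the Remark following Definition~\ref{cob}), while the factor $\mathbb{Z}^n$ is generated by the twists $\boldsymbol{\tau}_j$, and each $\boldsymbol{\tau}_j$ is the operadic composite of the identity with the generator $\tau$ of $\parb(1)\cong {*}//\mathbb{Z}$ plugged into the $j$-th leaf. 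Consequently $\parb$ is generated as an operad in groupoids by the object $(12)$ together with the morphisms $\beta\in\parb(2)$, $\alpha\in\parb(3)$ and $\tau\in\parb(1)$, so any operad map $\parb\to\mathsf{P}$ is determined by its restriction $g=(m,\beta,\alpha)$ along the inclusion $\pab\hookrightarrow\parb$ and by the single morphism $\tau\in\mor\mathsf{P}(1)$.

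Next I would establish the \emph{necessity} of the stated relation by a direct computation inside $\parb(2)$. Composing the arity-one twist on the outside of the binary operation, i.e.\ forming the image of $(\tau,id)$ under $\circ_1$, and unwinding the operadic composition formula of Definition~\ref{corb} (the case $(id,[1])\circ(id,[0,0])$ pictured there), yields the morphism whose braid part is the full twist $\boldsymbol{R}_2=\boldsymbol{\beta}_1^{2}$ and whose twisting numbers are $[1,1]$. Under the identifications $\boldsymbol{R}_2=\beta\cdot\sigma\beta$ (the braiding followed by its image under the transposition $\sigma$, an automorphism of $(12)$) and $[1,1]=id\circ(\tau,\tau)$ (plugging $\tau$ into each input), this reads precisely as $\beta\cdot\sigma\beta\cdot(id\circ(\tau,\tau))$ in $\parb(2)$. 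Applying the operad map then gives the asserted relation in $\mathsf{P}(2)$. I would also observe that the remaining twist relations of $\rb(n)$ — the commutativity $\boldsymbol{\tau}_i\boldsymbol{\tau}_j=\boldsymbol{\tau}_j\boldsymbol{\tau}_i$ and the commutation of a twist with a braid in non-adjacent positions — involve operations grafted into disjoint leaves and therefore hold automatically in the image by the operad interchange law, imposing no further condition.

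For \emph{sufficiency} I would show that a pair $(g,\tau)$ satisfying the relation determines a well-defined operad map. Uniqueness is immediate from the generation statement; the content is that the assignment respects all the defining relations of $\parb$. By the previous paragraph everything reduces to the interaction of the twists with the braiding and, more generally, with operadic composition — concretely, to reproducing the factor $(\boldsymbol{R}_m)^{x_k}$ that appears in the composition formula for $\corb$. This is where the main work lies: I would argue by induction on arity, using operad associativity to express the higher full twists $\boldsymbol{R}_m$ and their commutation with the strand twists in terms of the binary full twist $\boldsymbol{R}_2=\beta\cdot\sigma\beta$, so that the single arity-two relation propagates to all arities; the key adjacency relation $\boldsymbol{\beta}_i\boldsymbol{\tau}_{i+1}=\boldsymbol{\tau}_i\boldsymbol{\beta}_i$ is the arity-two instance and the general case follows by grafting. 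I expect this coherence step to be the principal obstacle. Alternatively, one can invoke the explicit presentation of the parenthesized ribbon operad in the style of \cite[Chapter 6]{FresseBook1}, extending the proof of Lemma~\ref{lem:maps_pab}, to conclude that the pentagon, the two hexagons, and this one ribbon relation form a complete set of relations, which yields the claimed identification of operad maps with pairs $(g,\tau)$.
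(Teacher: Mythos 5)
Your generation statement and the necessity computation are both correct and agree with the paper's setup: the relation in the statement is exactly the image under the operad map of the identity $\boldsymbol{\tau}\circ_1 id_2=\boldsymbol{\beta}\cdot\sigma\boldsymbol{\beta}\cdot(id\circ(\boldsymbol{\tau},\boldsymbol{\tau}))$, which holds in $\parb(2)$ by the composition formula of Definition~\ref{corb}. The problem is the sufficiency direction, which is the entire content of the lemma and which you do not actually carry out. Your strategy is generators-and-relations: define the candidate map on $(12)$, $\beta$, $\alpha$, $\tau$ and check that ``the assignment respects all the defining relations of $\parb$.'' But this presupposes that you know a complete set of defining relations for $\parb$ as an operad in groupoids, i.e.\ a presentation theorem stating that pentagon, the two hexagons, and the one ribbon relation suffice. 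That presentation theorem is logically equivalent to the lemma you are trying to prove; your fallback citation does not rescue this, since \cite[Chapter 6]{FresseBook1} establishes such a presentation only for $\pab$ (this is what Lemma~\ref{lem:maps_pab} cites), not for the ribbon version. Your induction sketch (``propagate the arity-two relation by grafting'') cannot substitute for it either, because without a presentation or an explicit construction there is no well-defined candidate map to run the induction on: uniqueness follows from generation, but existence does not.

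The paper closes exactly this gap by sidestepping presentations altogether. Every morphism of $\parb(k)$ has a \emph{unique} normal form $\gamma\cdot[n_1,\dots,n_k]$ with $\gamma$ a morphism of $\pab(k)$ and $[n_1,\dots,n_k]$ a pure twist, so one can simply \emph{define} the extension $f$ on all morphisms at once by $f(\gamma\cdot[n_1,\dots,n_k])=g(\gamma)\cdot\bigl(g(id)\circ(\tau^{n_1},\dots,\tau^{n_k})\bigr)$; well-definedness as a map of groupoids is then automatic, and all the work shifts to verifying compatibility with operadic composition. The paper reduces that verification, in several steps (splitting off the part handled by $g$ being an operad map, then isolating one twist at a time), to the single family of identities $f(\boldsymbol{\tau}\circ id_\ell)=\tau\circ id_\ell$, which is proved by induction on $\ell$: the base case $\ell=2$ is precisely the hypothesis of the lemma, and the inductive step uses associativity $(\boldsymbol{\tau}\circ id_\ell)\circ_j id_2=\boldsymbol{\tau}\circ id_{\ell+1}$ together with the fact that every object of $\parb(\ell+1)$ lies in the image of some $\circ_j:\parb(\ell)\times\parb(2)\to\parb(\ell+1)$, since $\ob(\parb)$ is generated by the arity-two operation. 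This is the concrete mechanism by which ``the single arity-two relation propagates to all arities,'' the step you correctly flagged as the principal obstacle but left unresolved; if you replace your presentation-based sufficiency argument with this explicit normal-form construction, your proof becomes essentially the paper's.
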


Before we go into the proof, let us fix some notation. The elements $\tau, m, \beta$ and $\alpha$ will be the images in $\mathsf{P}$ of certain elements in $\parb$ that we now describe.  We will use boldface notation $\boldsymbol{\tau}, \boldsymbol{m}, \boldsymbol{\beta}$ and $\boldsymbol{\alpha}$ for these elements in $\parb$ and set $\boldsymbol{m} = (12)$, and $\tau$ to be the morphism in $\parb(1)$ of the form $(id, [1])$ pictured below along with the other elements

\begin{figure}[htbp]
\begin{center}
\includegraphics[scale=0.3]{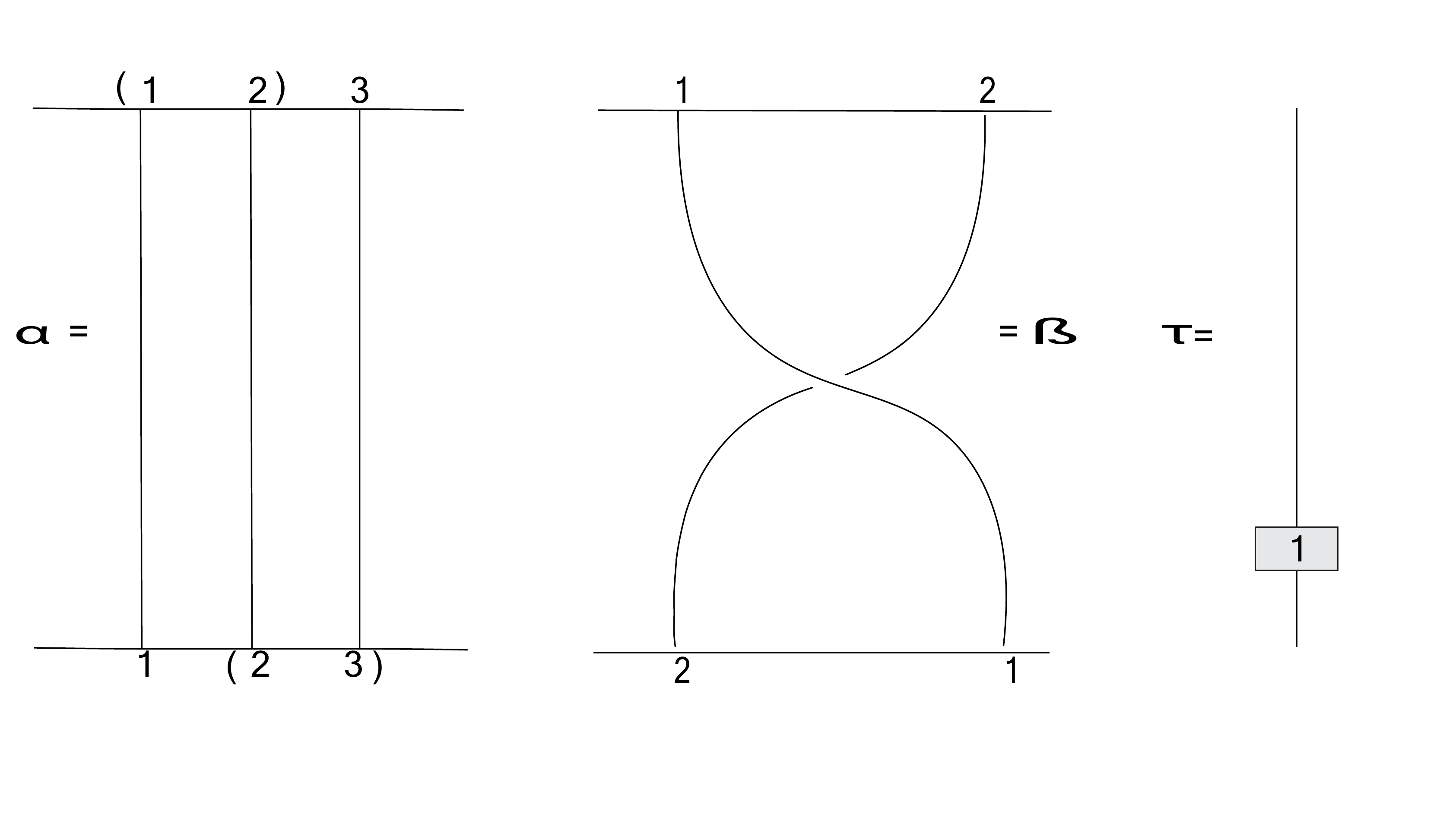}
\caption{left-to-right: $\boldsymbol{\alpha}$, $\boldsymbol{\beta}$ and $\boldsymbol{\tau}$}
\label{pic-gen}
\end{center}
\end{figure}

For a sequence of integers $n_1, \dots, n_k$, we write
\[
[n_1, \dots, n_k]
\]
for the morphism in $\parb(k)$ given by the pair $(id,[n_1, \dots, n_k])$. (This is also the image of $(id, \boldsymbol{\tau}^{n_1}, \dots, \boldsymbol{\tau}^{n_k})$ under the operadic composition map 
\[
\mor{\parb(k)} \times \mor{\parb(1)} \times \dots \times \mor{\parb(1)} \to \mor{\parb(k)}
\]
where $\boldsymbol{\tau}^m$ denotes the $m$-fold categorical composition of $\tau$.) The reader should note that there is one such morphism for each object of $\parb(k)$, however, we do not include the object in the notation in order to keep it as light as possible.

\begin{proof}
Starting with the map $g : \pab \to \mathsf{P}$, we want to lift it to a map $f : \parb \to \mathsf{P}$ such that pre-composition of $f$ with the canonical map $\pab \to \parb$ is $g$. We first define $f(k) : \parb(k) \to \mathsf{P}(k)$ for each $k$ using $g$ and $\tau$, and then use the relation in the statement of the lemma to show that the definition is indeed a map of operads. 

Firstly, it is enough to describe $f(k)$ on morphism sets; the definition extends to object sets via the source-target maps. Recall that a morphism in $\parb(k)$ is a pair $(\gamma, [n_1, \dots, n_k])$, where $\gamma$ is a morphism in $\pab(k)$ and each $n_i$ is an integer, and that the \emph{categorical} composition in $\parb(k)$ separates the braid concatenation and the addition of twists on each strand. Therefore, any morphism $(\gamma, [n_1, \dots, n_k])$ in $\parb(k)$ has a unique expression as a categorical composition of the form $(\gamma, [0,\dots, 0]) \cdot (id, [n_1, \dots, n_k])$, which we abbreviate as $\gamma \cdot [n_1, \dots, n_k]$.

Now we come to the definition of $f$. We declare $f(\boldsymbol{\tau}) = \tau$, 
\[
f([n_1, \dots, n_k]) = g(id) \circ (\tau^{n_1}, \dots, \tau^{n_k})
\]
and
\[
f(\gamma \cdot [n_1, \dots, n_k]) = g(\gamma) \cdot f([n_1, \dots, n_k]) \; .
\]
One easily checks that $f(k)$ is a map of groupoids for each $k$.

We now check that $f$ is a map of operads. Let $\gamma \cdot [n_1, \dots, n_k]$ and $\gamma^\prime \cdot [m_1, \dots, m_\ell]$ be elements in $\mor{\parb(k)}$ and $\mor{\parb(\ell)}$ respectively. We need to show that
\begin{equation}\label{eq:isanoperad}
f(\gamma \cdot [n_1, \dots, n_k] \circ_i \gamma^\prime \cdot [m_1, \dots, m_\ell]) = f(\gamma \cdot [n_1, \dots, n_k]) \circ_i f(\gamma^\prime \cdot [m_1, \dots, m_\ell])
\end{equation}

Writing $\gamma \cdot [n_1, \dots, n_k] \circ_i \gamma^\prime \cdot [m_1, \dots, m_\ell]$ as a categorical composition of $\gamma \circ_i \gamma^\prime$ and $[n_1, \dots, n_k] \circ_i [m_1, \dots, m_\ell]$, we can express the left-hand side of (\ref{eq:isanoperad}) as
\[
g(\gamma \circ_i \gamma^\prime) \cdot f([n_1, \dots, n_k] \circ_i [m_1, \dots, m_\ell]) \; .
\]
On the other hand, using that $\mathsf{P}$ is an operad, the right-hand side of (\ref{eq:isanoperad}) is equal to
\[
\bigl ( g(\gamma) \circ_i g(\gamma^\prime) \bigr) \cdot \bigl ( f([n_1, \dots, n_k]) \circ_i f([m_1, \dots, m_\ell]) \bigr) \; .
\]
Now, since $g$ is a map of operads we know that $g(\gamma \circ_i \gamma^\prime) = g(\gamma) \circ_i g(\gamma^\prime)$. Therefore, the equations (\ref{eq:isanoperad}) hold if and only if the equations
\[
f([n_1, \dots, n_k] \circ_i [m_1, \dots, m_\ell]) = f([n_1, \dots, n_k]) \circ_i f([m_1, \dots, m_\ell])
\]
hold. At this point, we remind the reader that $[n_1,\ldots,n_k]$ is notation for an automorphism of some unspecified object in $\parb(k)$; thus, the equation above is really a collection of equations, one for each choice of objects that makes the source and target of both sides coincide.

There are further reductions to be made. As a first step, by expressing $[n_1, \dots, n_k]$ as $[n_1, \dots, n_k] \cdot id_k$ and $[m_1, \dots, m_\ell]$ as $id_\ell \cdot [m_1, \dots, m_\ell]$, we split the problem into two:
\begin{equation}\label{eq: easyproblem}
f(id_k \circ_i [m_1, \dots, m_\ell]) = f(id_k) \circ_i f([m_1, \dots, m_\ell])
\end{equation}
and
\begin{equation}\label{eq: hardproblem}
f([n_1, \dots, n_k] \circ_i id_\ell) = f([n_1, \dots, n_k]) \circ_i f(id_\ell) \; ,
\end{equation}
where we use the notation $id_k$ or $id_\ell$ to denote the identity of an unspecified object in arity $k$ or $\ell$.
By definition of $f$, the equation (\ref{eq: easyproblem}) is always satisfied.

In order to check equation (\ref{eq: hardproblem}), we do a further reduction. Using that
\[
[n_1, \dots, n_k] = [n_1, 0, \dots, 0] \cdot [0, n_2, 0, \dots, 0] \cdot \, \dots \, \cdot [0, \dots, 0, n_k] 
\]
we may assume that, for a given $i$, $n_j$ is zero for all $j \neq i$ and $n_i = 1$. Thus, equation (\ref{eq: hardproblem}) holds if and only if the equation
\[
f([0,\dots, 1, \dots, 0] \circ_i id_\ell) = f([0,\dots, 1, \dots, 0]) \circ_i f(id_\ell) \; ,
\]
holds, where $1$ is in the $i^{th}$ position.

To proceed, we look at a generalization of equation (\ref{eq: easyproblem}): for any morphism $S \in \mor{\parb(\ell)}$, we have that 
$
f(id_k \circ_i S) = f(id_k) \circ_i f(S)
$. Using this, and the fact that $[0,\dots, 1, \dots, 0] = id_k \circ_i [1]$, we deduce that equation (\ref{eq: hardproblem}) holds if and only the equation
\begin{equation}\label{eq:tl}
f(\boldsymbol{\tau} \circ id_\ell) = f(\boldsymbol{\tau}) \circ f(id_\ell)
\end{equation}
holds (recall that $[1] = \boldsymbol{\tau}$).

By assumption, equation (\ref{eq:tl}) holds when $\ell = 2$. Indeed, the relation $\boldsymbol{\tau}\circ id_2 = \boldsymbol{\beta} \cdot \sigma \boldsymbol{\beta} \cdot (id \circ (\boldsymbol{\tau},\boldsymbol{\tau}))$ holds in $\parb$ and, applying $f$, we obtain the identity
\[
f(\boldsymbol{\tau}\circ id_2) = f\bigl(\boldsymbol{\beta} \cdot \sigma \boldsymbol{\beta} \cdot (id \circ (\boldsymbol{\tau},\boldsymbol{\tau})) \bigr) \; .
\]
By definition of $f$, the right-hand side equals $\beta \cdot \sigma \beta \cdot (id \circ (\tau,\tau))$ and so, by hypothesis, it equals $\tau\circ id_2$.

Now assume that equation (\ref{eq:tl}) has been established (for a chosen $\ell$). By associativity of operadic composition, we have equations
\[(\boldsymbol{\tau}\circ id_\ell)\circ_j id_2=\boldsymbol{\tau}\circ  id_{\ell+1}\;\; \textrm{and}\;\;(\tau\circ id_\ell)\circ_j id_2=\tau\circ  id_{\ell+1} \;.\]

Since $\ob(\parb)$ is generated by arity $2$ operations, any object in $\parb(\ell+1)$ is in the image of at least one of the composition maps:
\[\circ_j:\parb(\ell)\times\parb(2)\to\parb(\ell+1)\]
In particular, the equation
\[f(\boldsymbol{\tau}\circ id_{\ell+1})=\tau\circ id_{\ell+1}\]
holds if and only if, for each $j$, the equation
\begin{equation}\label{eq: final equation}
f((\boldsymbol{\tau}\circ id_\ell)\circ_j id_2)=(\tau\circ id_\ell)\circ_j id_2
\end{equation}
holds. But as we have already observed, the equation $f(\boldsymbol{\tau}\circ id_2)=\tau\circ id_2$ holds. If we reverse all the steps that let us reduce equation (\ref{eq:isanoperad}) to equation (\ref{eq:tl}), we see that (\ref{eq:isanoperad}) holds when $\ell=2$. In particular, equation (\ref{eq: final equation}) holds for each $j$.
\end{proof}

\subsection{Endomorphisms of $\parb$ fixing the objects}

We write $\End_0(\parb)$ and $\End_0(\pab)$ for the set of endomorphisms of ${\parb}$ and $\pab$ which are the identity on objects.

\begin{definition} 
The operad of twists $\mathsf{T}$ is the operad in groupoids which in degree $n$ is the group $\mathbb{Z}^n$ seen as a groupoid with a unique object. The operadic composition 
\[\circ_i:\mathsf{T}(n)\times\mathsf{T}(m)\to\mathsf{T}(n+m-1)\]
is given by the formula
\[(a_1,\ldots,a_n)\circ_i(b_1,\ldots,b_m):=(a_1,\ldots,a_{i-1},a_i+b_1,\ldots,a_i+b_m,a_{i+1},\ldots,a_n)\]
\end{definition} 

Note that there is a trivial morphism from any operad in groupoids to $\mathsf{T}$ which sends any morphism to $(0,\ldots,0)$.

\begin{lemma}\label{lemm: maps pab to T}
The trivial morphism is the only morphism from $\pab$ to $\mathsf{T}$.
\end{lemma}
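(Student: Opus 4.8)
The plan is to use the combinatorial description of operad maps out of $\pab$ provided by Lemma~\ref{lem:maps_pab}, together with the algebraic structure of $\mathsf{T}$. A morphism $\pab \to \mathsf{T}$ is determined by a triple $(m, \beta, \alpha)$ where $m \in \ob\mathsf{T}(2)$, $\beta$ is a morphism in $\mathsf{T}(2)$ from $m$ to $\sigma m$, and $\alpha$ is a morphism in $\mathsf{T}(3)$ from $m \circ_1 m$ to $m \circ_2 m$, subject to the pentagon and hexagon relations. Since each $\mathsf{T}(n)$ has a \emph{unique} object, the object data $m$ is forced and carries no information; the content is entirely in the choice of the morphisms $\beta$ and $\alpha$. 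So I would begin by observing that a map $\pab \to \mathsf{T}$ amounts to a pair $(\beta, \alpha)$ with $\beta \in \mathsf{T}(2) = \mathbb{Z}^2$ and $\alpha \in \mathsf{T}(3) = \mathbb{Z}^3$, and the goal is to show the hexagon and pentagon relations force $\beta = 0$ and $\alpha = 0$.

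The key point is that $\mathsf{T}$ is \emph{abelian} in each arity (each $\mathsf{T}(n) = \mathbb{Z}^n$ is a commutative group), so all the relevant diagrams become additive equations in free abelian groups, and the symmetric-group actions act by permuting coordinates. First I would extract the hexagon relations. In diagram (\ref{hexigon_rel_1}), going around the hexagon expresses a sum of translates of $\beta$ and $\alpha$ (under the various permutations $(213)$, $(231)$ acting by coordinate permutation on $\mathbb{Z}^3$) as zero; similarly for (\ref{hexigon_rel_2}). Because operadic composition $\circ_i$ in $\mathsf{T}$ duplicates an entry (by the defining formula $(a_1,\dots,a_n)\circ_i(b_1,\dots,b_m) = (\dots, a_i+b_1,\dots,a_i+b_m,\dots)$), the images $m\circ_1\beta$, $\beta\circ_2 m$, etc., become explicit $\mathbb{Z}^3$-vectors built from the two coordinates of $\beta$. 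Writing $\beta = (p,q) \in \mathbb{Z}^2$, each hexagon becomes a system of three linear equations in $p, q$; I expect these to force $\beta = 0$ directly, or at least to force $\beta$ into a form that the second hexagon then kills.

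Once $\beta = 0$ is established, the pentagon relation becomes a single additive relation among five permuted copies of $\alpha \in \mathbb{Z}^3$ inside $\mathsf{T}(4) = \mathbb{Z}^4$, again with the composition maps $\circ_j$ acting by coordinate duplication. Writing $\alpha = (a,b,c)$ and tracing the pentagon through the five edges $id\circ_1\alpha$, $\alpha\circ_1 id$, $\alpha\circ_2 id$, $\alpha\circ_3 id$, $id\circ_2\alpha$, I would expand each into its $\mathbb{Z}^4$ coordinates and set the total to zero, yielding a linear system in $a,b,c$ whose only solution is $\alpha = 0$. Concluding $(\beta,\alpha) = (0,0)$ shows the map agrees on the generating morphisms $\boldsymbol{\beta}$ and $\boldsymbol{\alpha}$ with the trivial morphism; since (by the remark following Definition~\ref{cob} and its analogue here) every morphism of $\pab$ is a categorical and operadic composition of these generators and identities, and since the trivial morphism is indeed a well-defined operad map to $\mathsf{T}$, uniqueness follows.

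The main obstacle I anticipate is purely bookkeeping rather than conceptual: carefully computing how each composition $m\circ_i\beta$, $\beta\circ_i m$, and the permutation actions translate into coordinate vectors in $\mathbb{Z}^3$ and $\mathbb{Z}^4$, since a sign or index error would derail the linear system. The conceptual content — that an abelian target forces the braiding and associator to vanish — is clear, so the care lies in setting up the correct additive equations from the duplication-style composition law of $\mathsf{T}$ and then solving them.
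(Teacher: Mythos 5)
Your overall strategy coincides with the paper's: use Lemma \ref{lem:maps_pab} to identify a map $\pab \to \mathsf{T}$ with a pair $(\beta,\alpha)\in\mathbb{Z}^2\times\mathbb{Z}^3$ (the object datum carries no information, since each $\mathsf{T}(n)$ has a unique object), then expand the hexagon and pentagon relations into additive equations in $\mathsf{T}(3)=\mathbb{Z}^3$ and $\mathsf{T}(4)=\mathbb{Z}^4$ via the duplication formula for $\circ_i$. But your proposed endgame contains a step that fails: the pentagon cannot force $\alpha=0$. Writing $\alpha=(a,b,c)$, the five edges of the pentagon (which involve no permutations at all, contrary to your description of ``five permuted copies of $\alpha$'') expand to
\[
(a,b,c,0)+(a,b,b,c)+(0,a,b,c)=(a,a,b,c)+(a,b,c,c),
\]
that is, $(2a,2b+a,c+2b,2c)=(2a,a+b,b+c,2c)$, whose only content is $b=0$; the outer coordinates $a$ and $c$ are completely unconstrained. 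Moreover, $\beta$ does not appear anywhere in the pentagon, so having ``already established $\beta=0$'' changes nothing at this step. This is precisely why the paper uses the pentagon only to extract $b=0$.

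The repair is to run the argument in the opposite order, which is what the paper does: the two hexagon relations (\ref{hexigon_rel_1}) and (\ref{hexigon_rel_2}) are joint linear equations in all five unknowns $a,b,c,x,y$ (where $\beta=(x,y)$), and it is they --- not the pentagon --- that force $a$, $c$, $x$, $y$ to vanish; combined with $b=0$ this kills both $\alpha$ and $\beta$. Your conceptual point that an abelian target must kill the braiding and the associator is correct, but in $\mathsf{T}$ the associator dies through the hexagons, where it is entangled with $\beta$; no amount of careful bookkeeping will make the pentagon alone do that job, so the proposal as sequenced gets stuck at its final step.
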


\begin{proof}
By Lemma \ref{lem:maps_pab}, such a morphism is determined by where it sends $\boldsymbol{\beta}$ and $\boldsymbol{\alpha}$. Let us call the image of these two morphisms $(x,y)\in\mathbb{Z}^2$ and $(a,b,c)\in\mathbb{Z}^3$ respectively. The pentagon relation gives us
\[(2a,2b+a,c+2b,2c)=(2a,a+b,b+c,2c)\]
which implies that $b=0$. The two hexagon relations force $a,c,x,y$ to all be $0$.
\end{proof}

\begin{prop}\label{prop: endomorphisms fixing the objects}
The following holds:
\begin{enumerate}
\item Any endomorphism of $\parb$ fixing the objects has to preserve $\pab\subset\parb$.
\item The induced map
\[\End_0(\parb) \to \End_0(\pab)\]
is an isomorphism.
\end{enumerate}
\end{prop}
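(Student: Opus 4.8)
The plan is to use Lemma~\ref{lem:maps_parb} to describe $\End_0(\parb)$ concretely and compare it with the description of $\End_0(\pab)$ coming from Lemma~\ref{lem:maps_pab}. By Lemma~\ref{lem:maps_parb}, an operad map $\parb \to \parb$ is a pair $(g, \tau)$ where $g$ is an operad map $\pab \to \parb$ and $\tau$ is a morphism in $\parb(1)$, subject to the compatibility relation relating $\tau \circ_1 id$ to $\beta \cdot \sigma\beta \cdot (id \circ (\tau,\tau))$. When the endomorphism is required to fix the objects, $g$ must in particular fix the objects of the suboperad $\pab$, and $\tau$ must be an endomorphism of the unique object of $\parb(1)$, i.e. an element of $\mathbb{Z}$ (the twist) -- but more precisely, since we want an endomorphism of $\parb$ and not merely a map from $\pab$, I first need to record that an object-fixing endomorphism of $\parb$ restricts to an object-fixing map $\pab \to \parb$ together with the choice of $\tau$.

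For part (1), the claim is that $g$ actually lands in $\pab \subset \parb$, i.e. that the image of $\boldsymbol{\beta}$ and $\boldsymbol{\alpha}$ under an object-fixing endomorphism carries no twisting data. The key observation is that there is a projection of operads $\parb \to \mathsf{T}$ (sending a ribbon braid to its tuple of twisting numbers), whose restriction to the suboperad $\pab$ is, by Lemma~\ref{lemm: maps pab to T}, the trivial map. Composing an object-fixing endomorphism of $\parb$ with $\parb \to \mathsf{T}$ and restricting along $\pab \hookrightarrow \parb$ gives a map $\pab \to \mathsf{T}$, which must therefore be trivial by Lemma~\ref{lemm: maps pab to T}. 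This forces the twisting-number component of the images of $\boldsymbol{\beta}$ and $\boldsymbol{\alpha}$ to vanish, so that $g$ preserves the subgroupoids of twist-free morphisms, i.e. preserves $\pab$. I would phrase this as: the images of the generators $\boldsymbol{\beta}, \boldsymbol{\alpha}$ must have trivial twisting numbers, hence lie in $\pab$.

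For part (2), the map $\End_0(\parb) \to \End_0(\pab)$ is restriction along the object-fixing inclusion $\pab \hookrightarrow \parb$; by part (1) this restriction is well-defined (the restricted endomorphism really lands in $\pab$). To see it is a bijection I would build an inverse. Given an object-fixing endomorphism $g$ of $\pab$, I extend it to $\parb$ by the recipe of Lemma~\ref{lem:maps_parb}, taking $\tau = \boldsymbol{\tau}$ (the generator of $\parb(1) \cong \mathbb{Z}$, which any object-fixing endomorphism must fix since it is the identity on the single object and an endomorphism of $\mathbb{Z}$ compatible with the operad structure is forced). The content is then that this choice of $\tau = \boldsymbol{\tau}$ together with the given $g$ satisfies the compatibility relation of Lemma~\ref{lem:maps_parb}; but this relation is exactly the defining relation $\boldsymbol{\tau}\circ id_2 = \boldsymbol{\beta}\cdot\sigma\boldsymbol{\beta}\cdot(id\circ(\boldsymbol{\tau},\boldsymbol{\tau}))$ that holds in $\parb$ itself, so applying the identity-on-$\parb(1)$ value of $\tau$ and using that $g$ already respects $\boldsymbol{\beta}$ makes it automatic. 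This produces a two-sided inverse, giving the isomorphism.

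The main obstacle I expect is pinning down that $\tau$ is forced to be $\boldsymbol{\tau}$ and not some other power, and that the only freedom in an object-fixing endomorphism of $\parb$ is genuinely captured by $\End_0(\pab)$ with no extra twisting contribution; this is where part (1) does the real work, since a priori $\tau$ could be any integer multiple of the generator and one must check the compatibility relation of Lemma~\ref{lem:maps_parb} rigidifies this choice. Concretely, evaluating the relation for $\ell = 2$ and using that the twisting-number projection kills $\beta$ should force $\tau$ to equal the generator $\boldsymbol{\tau}$, after which Lemma~\ref{lem:maps_parb} delivers the extension uniquely.
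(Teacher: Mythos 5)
Your part (1) is correct and is exactly the paper's argument: restrict an object-fixing endomorphism along $\pab\hookrightarrow\parb$, compose with the twist-recording projection $\parb\to\mathsf{T}$, and apply Lemma \ref{lemm: maps pab to T} to conclude the composite is trivial, so the endomorphism preserves $\pab$.

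Part (2), however, contains a genuine error. You claim that the compatibility relation of Lemma \ref{lem:maps_parb} forces $\tau=\boldsymbol{\tau}$, on the grounds that an object-fixing endomorphism ``must fix'' the generator of $\parb(1)\cong\mathbb{Z}$ and that the twisting-number projection kills $\beta$. Neither holds. Identify $\mor_{\parb(2)}\bigl((12),(12)\bigr)$ with $2\mathbb{Z}\times\mathbb{Z}\times\mathbb{Z}$ (underlying pure braid together with the two twist numbers). Part (1) gives $\beta\cdot\sigma\beta=(2\beta_1,0,0)$ for some integer $\beta_1$ depending on $g$ --- the projection kills the twist components of $\beta$, but \emph{not} its braid component. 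Writing $t$ for the twist integer of $\tau$, the relation $\tau\circ id=\beta\cdot\sigma\beta\cdot(id\circ(\tau,\tau))$ reads
\[
(2t,\,t,\,t)=(2\beta_1,\,0,\,0)\cdot(0,\,t,\,t)=(2\beta_1,\,t,\,t),
\]
so $t=\beta_1$. Thus the relation does rigidify $\tau$ --- that part of your intuition is right, and it is how the paper argues --- but it pins $\tau$ to $\boldsymbol{\tau}^{\beta_1}$, which varies with $g$, not to $\boldsymbol{\tau}$. Since $\End_0(\pab)$ contains endomorphisms with $\beta\neq\boldsymbol{\beta}$ (for instance the mirror-image automorphism $\boldsymbol{\beta}\mapsto\boldsymbol{\beta}^{-1}$, i.e.\ $(\lambda,f)=(-1,1)$ in Grothendieck--Teichm\"uller coordinates, for which $\beta_1=-1$), your proposed inverse $g\mapsto(g,\boldsymbol{\tau})$ produces pairs violating the relation, hence not operad maps: it is not well-defined on all of $\End_0(\pab)$. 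The correct inverse, as in the paper, is $g\mapsto(g,\boldsymbol{\tau}^{\beta_1})$. This is not a cosmetic slip: the dependence of $\tau$ on $g$ is precisely what makes $\gt$ act on $\widehat{\parb}(1)\cong\widehat{\mathbb{Z}}$ through the cyclotomic character, a fact the paper uses in the proof of Theorem \ref{thm: formality}; if $\tau$ were always fixed, that action would be trivial and the formality argument would collapse.
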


\begin{proof}
Using Lemma \ref{lem:maps_parb} and the notation therein, an endomorphism of $\parb$ fixing the objects is uniquely specified by a pair $(g, \tau)$, where $g = (m, \beta, \alpha)$ represents a map from $\pab$ to $\parb$ fixing the objects and $\tau$ is a morphism in $\parb(1)$. We claim that the map $g$ has to send $\pab$ to $\pab$. Indeed, there is a nontrivial map $\parb\to\mathsf{T}$ that sends a ribbon braid to the list of its twists. We can form the composite
\[\pab\xrightarrow{g}\parb\to\mathsf{T}\]
and according to \ref{lemm: maps pab to T} such a map has to be the trivial map. Therefore $g$ factors through the inclusion $\pab\to\parb$ (viewing $\pab$ as the suboperad whose morphisms have no twists). This proves (1).

By taking the underlying braiding together with the number of twists on each strand, the set $\mor_{\parb(2)}(12, 12)$ is identified with $\mor_{\pab(2)}(12, 12) \times \mathbb{Z} \times \mathbb{Z}$, i.e. $2\mathbb{Z} \times \mathbb{Z} \times \mathbb{Z}$. Recall that the set of morphisms in $\pab(2)$ from $(12)$ to $(21)$ agrees with $\mathbb{Z}$, the braid group on two strands. The morphism $\beta \cdot \sigma \beta$ is thus given by a triple $(2\beta_1, \beta_2, \beta_3)$ in $2\mathbb{Z} \times \mathbb{Z} \times \mathbb{Z}$ and $\tau$ is given by a single integer. By part (1), $\beta_2$ and $\beta_3$ have to be zero. The relation
\begin{equation}\label{eq:parbrelation}
\tau \circ id = \beta \cdot \sigma \beta \cdot (id \circ (\tau, \tau))
\end{equation}
which holds in the set of morphisms on $\parb(2)$ from $(12)$ to $(12)$, may then be expressed as a relation
\[
(2\tau, \tau, \tau) = (2\beta_1, 0,0) \cdot (0,\tau,\tau) \; .
\]
Therefore, $\beta_1 = \tau$. Hence we can construct a map $\End_0(\pab)\to\End_0(\parb)$ sending $g$ to $(g,\beta_1)$ which is an inverse to the restriction map.
\end{proof}

\subsection{Endomorphisms of $\parb$ up to homotopy}

The category of operads in groupoids is cotensored over groupoids. It follows that we can define a homotopy between two maps of operads in groupoids. We denote by $E$ the groupoid completion of $[1] = \{0 < 1\}$ and by $s$ and $t$ the two maps $[0]\to E$. For $f,g:\mathsf{P}\to\mathsf{Q}$ two maps of operads, a homotopy between $f$ and $g$ is a map $H:\mathsf{P}\to\mathsf{Q}^{E}$ such that when we postcompose with the two maps $\mathsf{Q}^{E}\to\mathsf{Q}$ induced by $s$ and $t$, we recover $f$ and $g$. The relation ``being homotopic'' is an equivalence relation between maps from $\mathsf{P}$ to $\mathsf{Q}$ and this equivalence relation is compatible with composition of morphisms. It follows that for $\mathsf{P}$ an operad in groupoids, the set of endomorphisms of $\mathsf{P}$ up to homotopy gets a monoid structure. We denote that monoid by $\hoEnd(\mathsf{P})$.

\begin{prop}\label{prop:0toho}
The composition
\[\End_0(\parb)\to\End(\parb)\to\hoEnd(\parb)\]
is an isomorphism.
\end{prop}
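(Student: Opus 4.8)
The plan is to show the composite map $\End_0(\parb)\to\End(\parb)\to\hoEnd(\parb)$ is an isomorphism by establishing that it is both surjective and injective on the monoid level. The key idea is that homotopies in $\Op(\G)$ are governed by natural isomorphisms (since $E$ is the free-living isomorphism groupoid), and the rigidity of the object structure of $\parb$ will force any endomorphism to be homotopic to one fixing the objects, with the homotopy class remembering exactly the object-fixing data.

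First I would address surjectivity. Given an arbitrary endomorphism $f:\parb\to\parb$, I want to produce an object-fixing endomorphism homotopic to it. Since $\ob(\parb)=\mathsf{M}$ is the free operad on one arity-two generator, $f$ is determined on objects by where it sends the generator $\boldsymbol{m}=(12)\in\parb(2)$. The target object $f(\boldsymbol{m})$ lies in the same arity-two component, and because $\parb(2)$ is a connected groupoid (any two objects are related by a ribbon braid), there is a morphism connecting $\boldsymbol{m}$ to $f(\boldsymbol{m})$. The plan is to use such a choice of connecting morphism to build an operadic natural isomorphism, i.e. a homotopy $H:\parb\to\parb^{E}$, conjugating $f$ to an endomorphism $f'$ that fixes the objects. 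Making this conjugation compatible with the full operad structure (not just arity two) is where care is needed: one must check that the chosen isomorphism on the generating object propagates coherently to all objects of $\parb$ via the operadic composition maps, which works precisely because every object of $\mathsf{M}$ is built from the generator.

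Next I would handle injectivity, which amounts to showing that two object-fixing endomorphisms that are homotopic must already be equal. Suppose $f_0, f_1 \in \End_0(\parb)$ are connected by a homotopy $H$. A homotopy here is a natural isomorphism between the underlying functors, compatible with the operad structure; in particular, on objects it supplies, for each object $p$, an isomorphism in $\parb$ from $f_0(p)=p$ to $f_1(p)=p$, i.e. an automorphism of $p$. The naturality-plus-operad-compatibility constraints, combined with Proposition~\ref{prop: endomorphisms fixing the objects} (which pins down $\End_0(\parb)\cong\End_0(\pab)$ by explicit data $(m,\beta,\alpha,\tau)$), should force these automorphisms to be trivial and hence $f_0=f_1$. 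Concretely, the homotopy's component at the generating object must be a central element compatible with the braiding and associator relations, and I expect the hexagon/pentagon constraints to rule out anything nontrivial.

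\textbf{The main obstacle} I anticipate is the surjectivity step, specifically verifying that the conjugating isomorphism chosen in arity two extends to a genuine operadic homotopy across all arities in a way that lands the conjugated map in $\End_0(\parb)$. The subtlety is that operadic homotopies are more rigid than mere levelwise natural isomorphisms: the component isomorphisms must themselves respect operadic composition. I would exploit the freeness of $\mathsf{M}$ heavily here, defining the homotopy's components inductively on the arity via the composition maps $\circ_j:\parb(\ell)\times\parb(2)\to\parb(\ell+1)$ exactly as in the proof of Lemma~\ref{lem:maps_parb}, and checking well-definedness against the operad axioms. Once the homotopy is built, the fact that it conjugates $f$ into something fixing objects is then a matter of tracing through the definitions.
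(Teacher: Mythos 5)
Your surjectivity sketch is consistent with the paper, which handles that half by citing \cite[Theorem 7.8]{Horel1}: one conjugates $f$ by a morphism joining $(12)$ to $f(12)$ in the connected groupoid $\parb(2)$ and uses freeness of $\ob(\parb)$ to propagate this to an operadic homotopy. The genuine gap is in your injectivity step, where the claim that the constraints ``force these automorphisms to be trivial'' is false. For every $a\in\mathbb{Z}$, set $h_p:=(\boldsymbol{R}_n^{\,a},[0,\dots,0])$ for each object $p$ of arity $n$, where $\boldsymbol{R}_n=(\boldsymbol{\beta}_1\cdots\boldsymbol{\beta}_{n-1})^n$ is the full twist. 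Since $\boldsymbol{R}_n$ generates the center of $\br(n)$, this family is natural with respect to \emph{every} morphism of $\parb(n)$; since $\circ_i$ restricts to a group homomorphism on automorphism groups and $\boldsymbol{R}_{n+m-1}=(\boldsymbol{R}_n\circ_i id)\cdot(id\circ_i \boldsymbol{R}_m)$ (the cabling identity for full twists), it is compatible with operadic composition, the unit and the symmetric group actions. Hence for $a\neq 0$ it is a \emph{nontrivial} operadic self-homotopy of the identity (indeed of any object-fixing endomorphism) of $\parb$. So no pentagon/hexagon argument can show that the components of a homotopy are trivial; what injectivity requires is the weaker, but still nontrivial, statement that a homotopy between two elements of $\End_0(\parb)$ conjugates one into the other \emph{trivially} --- and your proposal offers no argument for this beyond ``I expect''.

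This is exactly where the non-formal content of the proposition sits, and the paper supplies it with an external input that has no counterpart in your plan. A homotopy $H:f_0\Rightarrow f_1$ only yields $f_1(\gamma)=h_q\,f_0(\gamma)\,h_p^{-1}$, i.e.\ that $f_0$ and $f_1$ have \emph{conjugate} restrictions to each automorphism group $\prb(n)$; nothing makes the $h_p$ central a priori. The paper restricts to arity $3$ and a single object, uses that $\mathbb{Z}^3\subset\prb(3)$ is central to replace the conjugating element by one lying in $\pb(3)$, and then invokes the group-theoretic fact \cite[Proposition 7.7]{Horel1} that object-fixing endomorphisms of $\pab$ with conjugate restrictions to $\pb(3)$ must coincide, transferring the conclusion back to $\parb$ via Proposition \ref{prop: endomorphisms fixing the objects}. (One can alternatively argue by hand: by Proposition \ref{prop: endomorphisms fixing the objects} both $f_i$ preserve $\pab\subset\parb$, so writing $h_{(12)}=(x_{12}^{a},[b,c])$, naturality at $\boldsymbol{\beta}$ forces $b=c$, naturality at $\boldsymbol{\alpha}$ combined with $h_{(12)3}=h_{(12)}\circ_1 h_{(12)}$ and $h_{1(23)}=h_{(12)}\circ_2 h_{(12)}$ forces $b=0$, leaving exactly the central full-twist family above, whence $f_0=f_1$. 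But some such argument, using the specific structure of the (ribbon) braid groups, must be supplied; it does not fall out of the coherence relations.)
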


\begin{proof}
The surjectivity of this map can be translated by saying that any endomorphism of $\parb$ is homotopic to one that fixes the objects. This can be proven exactly as in \cite[Theorem 7.8.]{Horel1}.

Now, we prove injectivity. Let us denote by $\End_0(\prb(3))$ the monoid of endomorphisms of $\prb(3)$ that preserve the subgroup $\pb(3)$ and by $\hoEnd_0(\prb(3))$ the monoid of endomorphisms of $\prb(3)$ that preserve the subgroup $\pb(3)$ modulo homotopies.  We construct a commutative diagram
\[\xymatrix{
\End_0(\parb)\ar[r]\ar[d]&\hoEnd_0(\parb)\ar[r]^f&\hoEnd_0(\prb(3))\ar[d]\\
\End_0(\pab)\ar[r]&\hoEnd_0(\pab)\ar[r]_g&\hoEnd(\pb(3))
}
\]
The map labeled $f$ is induced by the restriction map
\[\End_0(\parb)\to\End_0(\parb(3))\to\End_0(\prb(3))\]
where the first map is the restriction to arity $3$ and the second map is the restriction to an object in $\parb(3)$. The map labeled $g$ is defined analogously. 

The right-hand vertical map is obtained by restriction to the subgroup $\pb(3)$. This is well-defined, as we now explain. Let $u$ and $v$ be two endomorphisms of $\prb(3)\cong\pb(3)\times\mathbb{Z}^3$ fixing the subgroup $\pb(3)\subset\prb(3)$ and such that there exists an element $h$ in $\prb(3)$ such that $u(x)=h^{-1}v(x) h$. Since the subgroup $\mathbb{Z}^3$ in $\prb(3)$ is contained in the center, we may assume without loss of generality that $h$ lies in $\pb(3)$ and we deduce that the restrictions of $u$ and $v$ to $\pb(3)$ are conjugate. The commutativity of the diagram is immediate.

By \cite[Proposition 7.7]{Horel1}, the lower horizontal composite is injective. By Proposition \ref{prop: endomorphisms fixing the objects}, the left vertical arrow is an isomorphism. It follows that the map $\End_0(\parb)\to\hoEnd_0(\parb)$ is injective as desired.
\end{proof}

\section{The main theorem}

Propositions \ref{prop: endomorphisms fixing the objects} and \ref{prop:0toho} have profinite variants which we state below. The proofs are similar.
\begin{prop} 
The map 
\[\End_0(\widehat{\parb}) \to \End_0(\widehat{\pab})\]
and the composite
\[\End_0(\widehat{\parb})\to\End(\widehat{\parb})\to\hoEnd(\widehat{\parb})\]
is an isomorphism of monoids. 
\end{prop}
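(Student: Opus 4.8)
The plan is to transport the proofs of Propositions \ref{prop: endomorphisms fixing the objects} and \ref{prop:0toho} verbatim to the profinite setting, replacing $\mathbb{Z}$ by $\widehat{\mathbb{Z}}$ and $\mathbb{F}_2$ by $\widehat{\mathbb{F}}_2$ throughout. The first thing to secure is that the combinatorial description of maps out of $\parb$ survives profinite completion. Since $\parb(n)$ and $\pab(n)$ have finitely many objects in each arity, Remark \ref{rem:prod gpoids} guarantees that $\widehat{\parb}$ and $\widehat{\pab}$ are honest (strict) operads in profinite groupoids and that profinite completion commutes with the finite products appearing in the operadic structure. Consequently Lemmas \ref{lem:maps_pab} and \ref{lem:maps_parb} apply with $\mathsf{P} = \widehat{\parb}$: an object-fixing endomorphism of $\widehat{\parb}$ is a pair $(g,\tau)$, where $g = (m,\beta,\alpha)$ is an object-fixing endomorphism of $\widehat{\pab}$ and $\tau$ is a morphism in $\widehat{\parb}(1) = \widehat{\mathbb{Z}}$, subject to the relation (\ref{eq:parbrelation}).

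For the first asserted isomorphism $\End_0(\widehat{\parb}) \to \End_0(\widehat{\pab})$, I would repeat the argument of Proposition \ref{prop: endomorphisms fixing the objects}. The operad of twists has a profinite completion $\widehat{\mathsf{T}}$ with $\widehat{\mathsf{T}}(n) = \widehat{\mathbb{Z}}^n$, and the only map $\widehat{\pab} \to \widehat{\mathsf{T}}$ is trivial: by Lemma \ref{lem:maps_pab} such a map is determined by images $(x,y) \in \widehat{\mathbb{Z}}^2$ and $(a,b,c) \in \widehat{\mathbb{Z}}^3$ of $\boldsymbol{\beta}$ and $\boldsymbol{\alpha}$, and the pentagon and hexagon relations are the same $\widehat{\mathbb{Z}}$-linear equations as in Lemma \ref{lemm: maps pab to T}, forcing all of them to vanish. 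Hence $g$ preserves $\widehat{\pab}$, giving the analogue of part (1). The identification $\mor_{\widehat{\parb}(2)}(12,12) \cong 2\widehat{\mathbb{Z}} \times \widehat{\mathbb{Z}} \times \widehat{\mathbb{Z}}$ together with (\ref{eq:parbrelation}) then forces $\beta_1 = \tau$ exactly as before, so $g \mapsto (g,\beta_1)$ is the inverse of the restriction map.

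For the second isomorphism I would follow Proposition \ref{prop:0toho}. Surjectivity of $\End_0(\widehat{\parb}) \to \hoEnd(\widehat{\parb})$ amounts to showing every endomorphism of $\widehat{\parb}$ is homotopic to one fixing the objects, which is obtained as in \cite[Theorem 7.8]{Horel1}, an argument already carried out in the profinite model structure on groupoids and applicable here because the relevant groupoids have finitely many objects. For injectivity I would reuse the commutative square relating $\End_0(\widehat{\parb})$, $\hoEnd_0(\widehat{\parb})$ and $\hoEnd_0(\widehat{\prb}(3))$, using $\widehat{\prb}(3) \cong \widehat{\pb}(3) \times \widehat{\mathbb{Z}}^3$ (again Remark \ref{rem:prod gpoids}) and the fact that the central $\widehat{\mathbb{Z}}^3$ factor allows one to reduce any conjugation to a conjugation inside $\widehat{\pb}(3)$, so that restriction to $\widehat{\pb}(3)$ is well defined on homotopy classes. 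The lower composite $\hoEnd_0(\widehat{\pab}) \to \hoEnd(\widehat{\pb}(3))$ is injective by the profinite version of \cite[Proposition 7.7]{Horel1}, and combining this with the first isomorphism (which identifies the left vertical arrow) yields injectivity of $\End_0(\widehat{\parb}) \to \hoEnd(\widehat{\parb})$.

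The main obstacle is not the algebra, which is insensitive to the passage from $\mathbb{Z}$ to $\widehat{\mathbb{Z}}$ and from $\mathbb{F}_2$ to $\widehat{\mathbb{F}}_2$, but the homotopy-theoretic input: one must know that ``homotopy of object-fixing endomorphisms'' of profinite groupoids is still controlled by conjugacy, and that the rigidification and the injectivity statement of \cite{Horel1} are available in the profinite model structure on $\widehat{\G}$. Once these profinite analogues are in hand, the diagram chase and the generator--relation computations go through unchanged, establishing both isomorphisms.
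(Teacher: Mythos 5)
Your proposal is correct and takes essentially the same approach as the paper: the paper gives no separate argument for this proposition, saying only that Propositions \ref{prop: endomorphisms fixing the objects} and \ref{prop:0toho} ``have profinite variants\dots The proofs are similar,'' and your write-up is precisely that transport. The supporting points you make explicit --- that $\widehat{\parb}$ and $\widehat{\pab}$ are strict operads in profinite groupoids by Remark \ref{rem:prod gpoids} so that Lemmas \ref{lem:maps_pab} and \ref{lem:maps_parb} still govern maps out of them, that the pentagon/hexagon equations over $\widehat{\mathbb{Z}}$ still force vanishing, that $\widehat{\mathbb{Z}}^3$ remains central in $\widehat{\prb}(3)\cong\widehat{\pb}(3)\times\widehat{\mathbb{Z}}^3$, and that the cited results of \cite{Horel1} are already proved in the profinite setting --- are exactly the checks the paper's terse remark implicitly relies on.
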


By Proposition \ref{prop:operadic definition of gt}, the monoid $\End_0(\widehat{\pab})$ is isomorphic to $\ugt$, the Grothendieck-Teichm\"uller monoid, it follows that
\begin{equation}\label{eq:gtisho}
\ugt \cong \hoEnd(\widehat{\parb}) \; .
\end{equation}

\begin{prop}\label{prop:hoend}
The monoid $\hoEnd(\widehat{\parb})$ is isomorphic to the monoid of path components of
$\Rmap(N \widehat{\parb}, N\widehat{\parb})$.
\end{prop}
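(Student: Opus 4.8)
The plan is to realize both monoids as the set of path components of a single derived mapping space, computed first in operads in profinite groupoids and then transported along the nerve to $\infty$-operads. The two links in this chain are an identification of $\hoEnd(\widehat{\parb})$ with $\pi_0$ of a derived endomorphism space of operads, and a profinite version of the homotopical full faithfulness of $N$.

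First I would show that the $E$-homotopy relation defining $\hoEnd(\widehat{\parb})$ computes path components of the derived endomorphism space of $\widehat{\parb}$ in $\Op(\widehat{\G})$, that is,
\[
\hoEnd(\widehat{\parb}) \cong \pi_0 \Rmap_{\Op(\widehat{\G})}(\widehat{\parb}, \widehat{\parb}) \; .
\]
For this it suffices to check that $\widehat{\parb}$ is cofibrant and fibrant and that the cotensor $\widehat{\parb}^{E}$ is a path object. Cofibrancy follows from Proposition \ref{prop: free op gp} together with the fact that operadic profinite completion is left Quillen: the operad of objects of $\parb$ is the free magma operad, hence cofibrant, and this is preserved under completion. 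Since $E$ is a finite contractible groupoid, cotensoring with it produces a path object for any fibrant operad, so right homotopy through $\widehat{\parb}^{E}$ is exactly the relation identifying maps lying in the same path component; standard model-category theory then gives $[\widehat{\parb},\widehat{\parb}] = \pi_0 \Rmap$. Because each $\widehat{\parb}(n)$ has finitely many objects, the needed fibrancy is a levelwise statement in $\widehat{\G}$.

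Next I would transport this identification along the nerve. By Remark \ref{rem:Nisopgp}, $N\widehat{\parb}$ is a \emph{strict} operad in profinite groupoids — the Segal maps are isomorphisms because completion preserves the relevant finite products (Remark \ref{rem:prod gpoids}) — so I would rerun the strategy of Proposition \ref{prop:nerve-is-ff} in the profinite setting: resolve $\widehat{\parb}$ by free operads and apply the reduction Lemma \ref{reduction_lemma}. This should yield a weak equivalence
\[
\Rmap_{\Op(\widehat{\G})}(\widehat{\parb}, \widehat{\parb}) \xrightarrow{\ \simeq\ } \Rmap_{\Op_{\infty}(\widehat{\G})}(N\widehat{\parb}, N\widehat{\parb}) \; ,
\]
and since both $\pi_0$'s inherit the monoid structure from composition, passing to path components and combining with the previous step gives the desired monoid isomorphism.

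The hard part will be the adaptation of Proposition \ref{prop:nerve-is-ff} to profinite groupoids. Its proof relies on the Segal-type localization $L_S(\mathbf{d}\mathbf{C})$ and on cofibrant resolutions by free operads, but the model structure on $\widehat{\G}$ is fibrantly rather than cofibrantly generated, so the usual Bousfield localization defining $\Op_{\infty}$ is unavailable; I would circumvent this, as the paper does elsewhere, by keeping $\Op_{\infty}(\widehat{\G})$ as a relative category and computing $\Rmap$ directly in $\mathbf{d}\widehat{\G}$. The genuine obstacle is then to verify that the two reductions in the proof of Proposition \ref{prop:nerve-is-ff} — resolution by free operads and the reduction step — survive levelwise profinite completion, since completion does not preserve arbitrary products. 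The hypothesis that each arity has finitely many objects, through Remark \ref{rem:prod gpoids}, is precisely what rescues the argument, and confirming this compatibility is the crux. A secondary and more routine point is the fibrancy of $\widehat{\parb}$ used in the first step.
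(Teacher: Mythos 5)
Your proposal has a genuine gap, and it sits exactly where you locate the ``crux.'' Both of your steps are carried out inside a homotopy theory that the paper does not have and deliberately avoids: a model structure on $\Op(\widehat{\G})$, the category of \emph{strict} operads in profinite groupoids. Your first step needs $\widehat{\parb}$ to be cofibrant and fibrant there and needs $(-)^E$ to be a path object; your justification (``Proposition \ref{prop: free op gp} together with the fact that operadic profinite completion is left Quillen'') invokes a Quillen functor $\Op(\G)\to\Op(\widehat{\G})$ that is never constructed. The paper only has model structures on $\widehat{\G}$ itself and on dendroidal objects $\mathbf{d}\widehat{\G}$ with levelwise equivalences; since $\widehat{\G}$ is fibrantly rather than cofibrantly generated, the usual transfer along the free-operad adjunction is obstructed --- this is precisely the issue that motivates the introduction of $\Op_\infty$ as a relative category. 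Your second step is worse: the profinite analogue of Proposition \ref{prop:nerve-is-ff} cannot be obtained by ``rerunning'' its proof, because that proof lives in the left Bousfield localization $L_S(\mathbf{dC})$ (unavailable over $\widehat{\G}$, for the same fibrant-generation reason), uses Reedy cofibrancy and the Bergner--Hackney reduction, and resolves by free operads --- a construction involving infinite coproducts, which profinite completion does not preserve. Remark \ref{rem:prod gpoids} only gives preservation of \emph{finite} products of groupoids with finitely many objects; it makes $N\widehat{\parb}$ a strict operad (Remark \ref{rem:Nisopgp}) but does not rescue any of the localization or resolution machinery. So the hard part you defer is not a verification but a genuine obstruction.

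The paper's proof sidesteps all of this with an adjunction trick you are missing: never compute a derived mapping space on the profinite operad side. By the completion adjunction, $\hoEnd(\widehat{\parb})$ is identified with $\pi_0\map_{\Op(\G)}(\parb,|\widehat{\parb}|)$, a mapping space of \emph{ordinary} operads in groupoids, where $\parb$ is cofibrant (Proposition \ref{prop: free op gp}) and every object is fibrant, so this computes $\pi_0\Rmap_{\Op(\G)}(\parb,|\widehat{\parb}|)$. Now apply Proposition \ref{prop:nerve-is-ff} exactly as stated, for ordinary groupoids, to pass to $\Rmap(N\parb,N|\widehat{\parb}|)$; then use $N|\widehat{\parb}|\cong|N\widehat{\parb}|$ and the derived Quillen adjunction $\mathbf{d}\G\leftrightarrows\mathbf{d}\widehat{\G}$ (completion preserves all weak equivalences, so it needs no further deriving) to identify this with $\Rmap(\widehat{N\parb},N\widehat{\parb})$; finally $\widehat{N\parb}\cong N\widehat{\parb}$ by Remark \ref{rem:Nisopgp}. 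The only place full faithfulness of the nerve is ever used is over ordinary groupoids, which is why the paper's argument closes and yours does not.
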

\begin{proof}
We claim that the statement holds for any operad $\mathsf{P}$ in groupoids which, like $\parb$, is cofibrant and such that $\mathsf{P}(n)$ has finitely many objects for each $n$. For such an operad, the set $\hoEnd(\widehat{\mathsf{P}})$ is identified with the set of path components of
\[
\map(\mathsf{P}, |\widehat{\mathsf{P}}|)
\]
where $\map$ refers to the mapping space in the category of operads in groupoids. Since the dendroidal nerve functor is homotopically fully faithful, the map
\[
\Rmap(\mathsf{P}, |\widehat{\mathsf{P}}|) \to \Rmap(N \mathsf{P}, N|\widehat{\mathsf{P}}|)
\]
is a weak equivalence of spaces. The dendroidal space $N|\widehat{\mathsf{P}}|$ is an $
\infty$-operad by Remark \ref{rem:Nisopgp}. The right-hand mapping space is identified with
$$\Rmap(\widehat{N \mathsf{P}}, N\widehat{\mathsf{P}})$$
since $|-|$ and $N$ commute and the profinite completion functor agrees with its left (and right) derived functor since it preserves all weak equivalences. Moreover, $\widehat{N \mathsf{P}} \cong N \widehat{\mathsf{P}}$ since completion of groupoids with finitely many objects commutes with products (Remark \ref{rem:Nisopgp}).
\end{proof}

Given an operad in (profinite) groupoids $G$, we let $BG$ denote the operad in (profinite) spaces obtained via the classifying space construction. There is a natural map
\begin{equation}\label{eq:goodmap}
(B N \parb)^{\wedge} \to B N \widehat{\parb}
\end{equation}
where the left-hand side is an alternative notation for the profinite completion of $B N \parb$. This map is given as the adjoint of the composite
\[
BN \parb \to BN|\widehat{\parb}| \xrightarrow{\cong} | B N \widehat{\parb}| \;.
\]
where the first map is the unit of the adjunction between operads in groupoids and operads in profinite groupoids.

\begin{lemma}\label{lemm: goodness of pure ribbon braid groups}
The map (\ref{eq:goodmap}) is a weak equivalence.
\end{lemma}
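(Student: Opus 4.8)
The plan is to verify that the map (\ref{eq:goodmap}) is a weak equivalence one tree at a time, which is legitimate because weak equivalences of dendroidal objects in $\widehat{\S}$ are detected levelwise and because profinite completion of a dendroidal space is computed levelwise. Using that $B$ and $N$ commute and that $B$ preserves products, the value of the source at a tree $T$ is the profinite completion of $\prod_{v\in T}B\parb(|v|)$. For the target I would invoke Remark~\ref{rem:Nisopgp}, which says that $N\widehat{\parb}$ is an honest operad in profinite groupoids; its value at $T$ is the genuine product $\prod_{v\in T}\widehat{\parb}(|v|)$, and applying $B$ (which preserves finite products of profinite groupoids with finitely many objects) gives $\prod_{v\in T}B\widehat{\parb}(|v|)$. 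At $T=\eta$ both sides are a point, so the only content is at trees with vertices.

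First I would factor the map at a tree $T$ as
\[
\Bigl(\prod_{v}B\parb(|v|)\Bigr)^{\wedge}\longrightarrow \prod_{v}(B\parb(|v|))^{\wedge}\longrightarrow \prod_{v}B\widehat{\parb}(|v|),
\]
the first arrow being the canonical comparison map for the profinite completion of a product. To see that this first arrow is a weak equivalence I would apply Proposition~\ref{prop:prodcompl}: each $B\parb(n)$ is the classifying space of a connected groupoid, hence connected, and it is a $K(\prb(n),1)$ whose only nontrivial homotopy group is $\prb(n)\cong \pb(n)\times\mathbb{Z}^n$. Since pure braid groups and $\mathbb{Z}$ are good, and goodness is preserved by finite products, $\prb(n)$ is good, so the hypotheses of Proposition~\ref{prop:prodcompl} are met.

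It then remains to treat the second arrow, for which it suffices to show that for each $n$ the map $(B\parb(n))^{\wedge}\to B\widehat{\parb}(n)$ is a weak equivalence. Both sides are connected profinite spaces, and I would identify each as a profinite $K(\widehat{\prb(n)},1)$: the target because $\widehat{\parb}(n)$ is the profinite completion of a connected groupoid with automorphism group $\prb(n)$, so its classifying space is aspherical with fundamental group $\widehat{\prb(n)}$; the source because, $\prb(n)$ being good, \cite[Theorem 3.14]{quickremarks} gives $\pi_i(B\parb(n))^{\wedge}\cong\widehat{\pi_i B\parb(n)}$, which is $\widehat{\prb(n)}$ in degree $1$ and trivial otherwise. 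The comparison map induces an isomorphism on $\pi_1$ and hence on all homotopy groups, so it is a weak equivalence because, as recorded in the proof of Proposition~\ref{prop:prodcompl}, weak equivalences in $\widehat{\S}$ are detected on homotopy groups.

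I expect the main obstacle to be exactly this second arrow, namely the compatibility of the classifying space functor with profinite completion: one must be sure that $(B\Gamma)^{\wedge}$ really is a profinite $K(\widehat{\Gamma},1)$ for the good group $\Gamma=\prb(n)$. This is where goodness of the pure (ribbon) braid groups is essential and where the vanishing of the higher profinite homotopy groups — via the identification of $\pi_i$ of the completion with the completions of the (trivial) higher homotopy groups — has to be justified.
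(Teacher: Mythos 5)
Your proof has the same mathematical core as the paper's: goodness of $\prb(n)\cong\pb(n)\times\mathbb{Z}^n$, the arity-wise equivalence $(B\prb(n))^{\wedge}\to B\widehat{\prb(n)}$ (which the paper gets by citing \cite[Corollary 5.11, Corollary 5.12]{Horel1}, and which you re-derive from \cite[Theorem 3.14]{quickremarks} plus detection of weak equivalences on homotopy groups --- equivalent content), and Proposition~\ref{prop:prodcompl} to move the completion past products. The organization differs: the paper first observes that \emph{both} sides of (\ref{eq:goodmap}) are $\infty$-operads (via Proposition~\ref{prop:Nisop} and Remark~\ref{rem:Nisopgp}) and then reduces to corollas using the Segal condition, whereas you verify the equivalence tree by tree through an explicit two-step factorization. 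Both reductions are legitimate in principle, and yours is arguably more self-contained in that it does not invoke the corolla-reduction principle for maps of $\infty$-operads.

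There is, however, one point where your factorization needs more care, and it is exactly the issue the paper's $\infty$-operad formalism is designed to dodge. Your second arrow, $\prod_{v}(B\parb(|v|))^{\wedge}\to\prod_{v}B\widehat{\parb}(|v|)$, is a finite product of weak equivalences, and you pass from ``each factor is a weak equivalence'' to ``the product is a weak equivalence'' without comment. In $\widehat{\S}$ this is not automatic: Quick's model structure is fibrantly generated, not every object is fibrant, and a point-set product of non-fibrant objects need not be a homotopy product (this is precisely why Definition~\ref{defn:inftyoperad} inserts fibrant replacements into the Segal maps, and why the paper's corolla reduction only ever multiplies weak equivalences between \emph{fibrant} objects). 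The sources $(B\parb(|v|))^{\wedge}$ are not known to be fibrant, so the claim needs an argument. The same caveat applies to iterating Proposition~\ref{prop:prodcompl}, which is stated for two factors, across all vertices of $T$. Both issues can be repaired: the target factors $B\widehat{\parb}(|v|)$ \emph{are} fibrant (classifying spaces of profinite groupoids are fibrant, $B$ being right Quillen from $\widehat{\G}$ in which every object is fibrant), so the target product is a genuine homotopy product and its homotopy groups are $\prod_v\widehat{\prb(|v|)}$ in degree one; on the other side, rather than factoring, compute the homotopy groups of $\bigl(\prod_v B\parb(|v|)\bigr)^{\wedge}$ directly by applying \cite[Theorem 3.14]{quickremarks} to the single connected space $\prod_v B\parb(|v|)$, whose fundamental group $\prod_v\prb(|v|)$ is good because finite products of good groups are good. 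The composite map at $T$ is then an isomorphism on homotopy groups, and you conclude as before. With this adjustment your argument is complete.
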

\begin{proof}
The pure ribbon braid groups $\prb(n)$ are good since they split as a product of good groups $\pb(n) \times \mathbb{Z}^n$ and so we can apply \cite[Corollary 5.11, Corollary 5.12]{Horel1}. Thus, by Proposition \ref{prop:Nisop}, both sides are $\infty$-operads in profinite spaces. It suffices to prove that the map is a weak equivalence on corollas, i.e. that $$(B\prb(n))^{\wedge} \to B \widehat{\prb(n)}$$ is a weak equivalence for every $n$. This follows again from the fact that the pure ribbon braid groups are good.
\end{proof}

Putting it all together, we obtain
\begin{thm}\label{mainthm}

There is an isomorphism
\[
\ugt \cong \pi_0 \REnd(\widehat{\mathcal{M}})
\]
where $\mathcal{M}$ denotes the version of the genus zero surface operad without $0$-arity operations.
\end{thm}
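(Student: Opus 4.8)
The plan is to assemble the chain of identifications established in the previous sections and then to transport the computation from profinite groupoids to profinite spaces by means of the classifying space functor. By equation (\ref{eq:gtisho}) together with Proposition \ref{prop:hoend}, there is an isomorphism of monoids
\[
\ugt \cong \hoEnd(\widehat{\parb}) \cong \pi_0 \Rmap(N\widehat{\parb}, N\widehat{\parb}),
\]
where the derived mapping space on the right is computed in $\Op_{\infty}(\widehat{\G})$, the relative category of $\infty$-operads in profinite groupoids. It therefore suffices to produce an isomorphism
\[
\pi_0 \Rmap(N\widehat{\parb}, N\widehat{\parb}) \cong \pi_0 \REnd(\widehat{\mathcal{M}}),
\]
the latter being computed in $\Op_{\infty}(\widehat{\S})$.

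First I would identify the target. Since $\mathcal{M} = B\corb$ and the forgetful map $\parb \to \corb$ is a weak equivalence of operads in groupoids (Lemma \ref{lem:parbisfE2}), the induced map $B\parb \to B\corb = \mathcal{M}$ is a weak equivalence of operads in spaces. The nerve functor and the levelwise profinite completion both preserve levelwise weak equivalences, and the classifying space commutes with the nerve ($BN \cong NB$). Combining these with the weak equivalence $(BN\parb)^{\wedge} \to BN\widehat{\parb}$ of Lemma \ref{lemm: goodness of pure ribbon braid groups}, I obtain a zigzag of weak equivalences of dendroidal objects
\[
\widehat{\mathcal{M}} = (N\mathcal{M})^{\wedge} \xleftarrow{\ \sim\ } (NB\parb)^{\wedge} = (BN\parb)^{\wedge} \xrightarrow{\ \sim\ } BN\widehat{\parb} = NB\widehat{\parb}.
\]
By Proposition \ref{prop:Nisop}, which applies because each $\mathcal{M}(n) = B\corb(n)$ is connected with good homotopy groups $\prb(n) \cong \pb(n) \times \mathbb{Z}^n$, every term above is an $\infty$-operad in profinite spaces, so this exhibits $\widehat{\mathcal{M}}$ as equivalent to $NB\widehat{\parb}$ in $\Op_{\infty}(\widehat{\S})$. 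Consequently $\REnd(\widehat{\mathcal{M}}) \simeq \Rmap(NB\widehat{\parb}, NB\widehat{\parb})$.

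It remains to compare the derived endomorphisms of $B\widehat{\parb}$ computed over profinite spaces with those of $\widehat{\parb}$ computed over profinite groupoids. For this I would invoke that the classifying space functor $B : \widehat{\G} \to \widehat{\S}$ is homotopically fully faithful, the profinite counterpart of the full faithfulness recorded earlier for ordinary groupoids (see \cite{Horel1}), and that, being product-preserving, it sends $\infty$-operads in profinite groupoids to $\infty$-operads in profinite spaces. Applying $B$ levelwise then induces a weak equivalence
\[
\Rmap_{\Op_{\infty}(\widehat{\G})}(N\widehat{\parb}, N\widehat{\parb}) \xrightarrow{\ \sim\ } \Rmap_{\Op_{\infty}(\widehat{\S})}(BN\widehat{\parb}, BN\widehat{\parb}),
\]
exactly as in the groupoid-to-space step of the proof of Proposition \ref{prop:nerve-is-ff}. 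Passing to $\pi_0$ and stringing together the identifications above yields $\ugt \cong \pi_0 \REnd(\widehat{\mathcal{M}})$.

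I expect the main obstacle to be the middle step, namely the honest identification of $\widehat{\mathcal{M}}$ with $NB\widehat{\parb}$ inside $\Op_{\infty}(\widehat{\S})$. One must know both that levelwise profinite completion commutes with the classifying space up to weak equivalence, which is precisely where the goodness of the pure ribbon braid groups enters through Lemma \ref{lemm: goodness of pure ribbon braid groups}, and that the resulting dendroidal profinite space still satisfies the Segal condition, so that it is a genuine $\infty$-operad (Proposition \ref{prop:Nisop}). Granting those inputs, the homotopical full faithfulness of the profinite classifying space functor is the clean bridge that transports the already-completed groupoid-level computation $\pi_0 \Rmap(N\widehat{\parb}, N\widehat{\parb}) \cong \ugt$ to the level of profinite spaces.
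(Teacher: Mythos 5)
Your proposal is correct and follows essentially the same route as the paper: replace $\mathcal{M}$ by $B\parb$ via Lemma \ref{lem:parbisfE2}, use Lemma \ref{lemm: goodness of pure ribbon braid groups} to identify $(BN\parb)^{\wedge}$ with $BN\widehat{\parb}$, transport along the homotopically fully faithful profinite classifying space functor, and conclude with Proposition \ref{prop:hoend} and the isomorphism (\ref{eq:gtisho}). The only difference is cosmetic: you run the chain of identifications from $\ugt$ toward $\REnd(\widehat{\mathcal{M}})$ and spell out the Segal-condition check (Proposition \ref{prop:Nisop}) more explicitly than the paper does.
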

\begin{proof}
By Lemma \ref{lem:parbisfE2},  $ \REnd(\widehat{\mathcal{M}})$ is weakly equivalent to $\REnd((BN \parb)^{\wedge})$. The latter is weakly equivalent to $\REnd(BN\widehat{\parb})$ by Lemma \ref{lemm: goodness of pure ribbon braid groups}. The classifying space functor $B$ induces a homotopically fully faithful functor from $\infty$-operads in profinite groupoids to $\infty$-operads in profinite spaces. In particular, the map $$\Rmap(N \widehat{\parb}, N\widehat{\parb}) \to \Rmap(B N \widehat{\parb}, B N\widehat{\parb})$$
is a weak equivalence of spaces. By Proposition \ref{prop:hoend} and the isomorphism (\ref{eq:gtisho}), the monoid of path components of the source is isomorphic to $\ugt$.
\end{proof}


\section{Formality of the genus zero surface operad}\label{section:formality}

It has been proved independently by Severa and Giansiracusa-Salvatore (see \cite{giansiracusaformality,severaformality}) that the framed little disks operad is rationally formal. That is to say, there is a zigzag of quasi-isomorphisms of dg-operads between $C_*(\mathsf{FD},\mathbb{Q})$ and its homology, seen as a dg-operad with zero differential. In this section, we exploit the action of the Grothendieck-Teichm\"uller group on the profinite completion of the genus zero surface operad in order to give an alternative proof of the formality of $\mathcal{M}$ and, equivalently, $\mathsf{FD}$. The idea is to use the fact that there is a model for $C_*(\mathcal{M},\mathbb{Q}_p)$ that is computed using the profinite completion of $\mathcal{M}$ and thus inherits a $\gt$-action. This large supply of automorphisms on the chains on $\mathcal{M}$ allows us to apply a formality criterion introduced by Guillen Navarro Pascual and Roig in \cite{guillenmoduli}

In preparation for our proof, we introduce a notation. For $X=\mathrm{lim}_iX_i$ a pro-simplicial set and $R$ a commutative ring, we denote by $C^\bullet(X,R)$ the cosimplicial $R$-module given by the formula 
\[C^\bullet(X,R):=\mathrm{colim}_i C^\bullet(X_i,R) \; . \]
There is a K\"unneth isomorphism at the level of cosimplicial objects in the sense that there is a natural isomorphism
\[C^\bullet(X\times Y,R)\cong C^\bullet(X,R)\otimes_RC^\bullet(Y,R) \; .\]
In particular, if $\mathsf{Q}$ is an operad in pro-simplicial sets, $C^\bullet(\mathsf{Q},R)$ has the structure of a cosimplicial cooperad in $R$-modules.

\begin{thm}\label{thm: formality}
The operad $\mathcal{M}$ is formal, that is, there exists a zig-zag of quasi-isomorphisms of dg-operads in $\mathbb{Q}$-vector spaces.
\[C_*(\mathcal{M},\mathbb{Q})\leftarrow X\rightarrow H_*(\mathcal{M},\mathbb{Q})\]
\end{thm}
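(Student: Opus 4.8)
The plan is to invoke the descent/formality criterion of Guillen, Navarro, Pascual and Roig from \cite{guillenmoduli}, which reduces formality of a dg-operad over $\mathbb{Q}$ to the existence of sufficiently many symmetries realized over a field extension. Concretely, the criterion says that if a (co)operad in $\mathbb{Q}$-vector spaces becomes formal after base change to some field $K \supseteq \mathbb{Q}$, and if it carries an action of a group by automorphisms which acts on the relevant cohomology in a way that mixes weights (for instance through a homothety acting by distinct nonzero powers on different degrees), then the object is already formal over $\mathbb{Q}$. The engine here is that $\gt$, and in particular its $\widehat{\mathbb{Z}}^\times$-component through the parameter $\lambda$, supplies precisely such weight-mixing automorphisms on the profinite completion.

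\emph{Key steps.} First I would pass from $\mathcal{M}$ to its profinite completion and use Theorem~\ref{mainthm}, which identifies $\ugt$ with $\pi_0 \REnd(\widehat{\mathcal{M}})$; this produces a genuine action of $\gt$ (the units of $\ugt$) on the profinite completion of the genus zero surface operad, compatibly with the operad structure. Second, I would descend this homotopical action to an honest action on a chain-level model. Using the notation $C^\bullet(-,R)$ for the cosimplicial cochains of a pro-simplicial set introduced just above the statement, together with the K\"unneth isomorphism, I would observe that $C^\bullet(B N \widehat{\parb}, \mathbb{Q}_p)$ is a cosimplicial cooperad carrying a $\gt$-action, and that for a suitable prime $p$ this computes $C_*(\mathcal{M},\mathbb{Q}_p)$ (equivalently its linear dual) up to quasi-isomorphism. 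Third, I would extract from the $\lambda$-part of $\gt$ an automorphism of the homology cooperad $H^*(\mathcal{M},\mathbb{Q}_p) = H^*(\mathsf{FD},\mathbb{Q}_p)$ acting on the degree-$n$ part by a fixed unit raised to a weight that depends on the homological degree. This is the hypothesis demanded by the Guillen--Navarro--Pascual--Roig formality criterion, so the criterion then yields formality of $C_*(\mathcal{M},\mathbb{Q}_p)$ over $\mathbb{Q}_p$. Finally, since formality over one field extension of $\mathbb{Q}$ implies formality over $\mathbb{Q}$ for operads of finite type (formality is detected by the vanishing of obstruction classes in cohomology groups that are defined over $\mathbb{Q}$ and only become easier to kill after extending scalars, and these spaces satisfy $V \otimes_{\mathbb{Q}} \mathbb{Q}_p = 0 \iff V = 0$), I would descend the formality quasi-isomorphisms from $\mathbb{Q}_p$ back to $\mathbb{Q}$.

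\emph{Main obstacle.} The delicate point will be the second step: producing an \emph{honest}, strictly operadic (or cooperadic) chain-level model out of the a priori only homotopy-coherent, $\infty$-operadic data furnished by Theorem~\ref{mainthm}. The profinite completion is only an $\infty$-operad, not a strict operad, so I must be careful that the $\gt$-action on $\widehat{\mathcal{M}}$ transports to a strict action on a cosimplicial cooperad rather than merely an action up to coherent homotopy; here the fact (Remark~\ref{rem:Nisopgp}) that for operads in groupoids with finitely many objects the levelwise completion $N\widehat{\parb}$ is a \emph{strict} operad in profinite groupoids is what saves the day, since it lets me work with $B N \widehat{\parb}$ and its strict cochain cooperad. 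The other point requiring care is checking that the weight grading induced by $\lambda$ genuinely separates homological degrees on $H_*(\mathsf{FD})$ with the multiplicativity needed by the criterion (i.e. that the action is by a nontrivial homothety in each positive degree and these homotheties are powers of a single element), which is exactly where the nontriviality of the $\gt$-action, asserted in the second main theorem, becomes essential.
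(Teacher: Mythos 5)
Your overall strategy coincides with the paper's proof (which follows Petersen): use Theorem \ref{mainthm} to obtain a strict $\gt$-action on the cosimplicial cooperad $\lim_n C^\bullet(B\widehat{\parb},\mathbb{Z}/p^n)$, tensor with $\mathbb{Q}_p$ and dualize to get a dg-operad $\mathsf{P}$ over $\mathbb{Q}_p$ with a genuine $\gt$-action, produce a grading automorphism from an infinite-order unit via the cyclotomic character, apply the Guillen--Navarro--Pascual--Roig/Petersen criterion to conclude formality over $\mathbb{Q}_p$, and descend to $\mathbb{Q}$ using \cite[Theorem 6.2.1]{guillenmoduli}. Your chain-level construction, including the appeal to Remark \ref{rem:Nisopgp} for strictness of the completed operad in groupoids, is the same as the paper's.

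There is, however, one genuine flaw in how you propose to justify the crucial step, namely that some element of $\gt$ acts on $H_*(\mathsf{P})$ as the grading automorphism $\phi_u$ (multiplication by $u^n$ in homological degree $n$, $u$ of infinite order). You locate this input in ``the nontriviality of the $\gt$-action, asserted in the second main theorem,'' i.e. in Theorem \ref{thm: action on bmod}. This is backwards for two reasons. First, it is circular: in the paper, the nontriviality of the action on $(N\bmod_{0,\bullet+1})^{\wedge}$ is itself deduced from Proposition \ref{prop: action on H}, which is exactly the computation of the $\gt$-action on $H_*(\widehat{\mathcal{M}},\mathbb{Q}_p)$ performed \emph{inside} the formality proof; so it cannot serve as an input to that computation. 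Second, mere nontriviality is too weak: the criterion requires the precise weight structure, a single automorphism acting by $u^n$ in every degree $n$. The paper establishes this directly: $H_*(\mathsf{P})$ is the Batalin--Vilkovisky operad, generated as an operad by the product (arity $2$, degree $0$) and the operator $\Delta$ (arity $1$, degree $1$); the $\gt$-action is trivial on $H_0(\mathsf{P}(2))$, and on $H_1(\mathsf{P}(1))$ it is given by the cyclotomic character because the action on $\widehat{\prb}(1)\cong\widehat{\mathbb{Z}}$ is induced by the projection $\gt\to\widehat{\mathbb{Z}}^{\times}$ (see the proof of Proposition \ref{prop: endomorphisms fixing the objects}); multiplicativity of the operad structure then forces the action in degree $n$ to be $\chi_p(g)^n$. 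Surjectivity of $\chi_p:\gt\to\mathbb{Z}_p^{\times}$ finally supplies an automorphism realizing $\phi_u$ for an infinite-order unit $u$. With this substitution in your third step, your proof goes through.
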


\begin{proof}
We follow the strategy of \cite{petersenformality}. First, by \cite[Theorem 6.2.1.]{guillenmoduli}, it suffices to prove that $C_*(\mathcal{M},\mathbb{Q}_p)$ is formal as an operad in dg-operad in $\mathbb{Q}_p$-vector spaces. For any positive integer $n$, there is a quasi-isomorphism of cosimplicial cooperads in $\mathbb{Z}/p^n$-modules.
\[ C^\bullet(\mathcal{M},\mathbb{Z}/p^n)\simeq C^\bullet(B\parb,\mathbb{Z}/p^n)\simeq C^\bullet(B\widehat{\parb},\mathbb{Z}/p^n)\]

Taking the limit over $n$ (which in this case is a homotopy limit since the transition maps are surjections), we get a quasi-isomorphism of cosimplicial cooperads
\[\lim_n C^\bullet(\mathcal{M},\mathbb{Z}/p^n )\simeq \mathrm{lim}_n C^\bullet(B\widehat{\parb},\mathbb{Z}/p^n)\]
We also claim that the map
\[C^\bullet(\mathcal{M},\mathbb{Z}_p)\to \lim_n C^\bullet(\mathcal{M},\mathbb{Z}/p^n)\]
is a quasi-isomorphism as can be seen from Milnor's short exact sequence and the fact that the  cohomology of the spaces $\mathcal{M}(n)$ is finitely generated which implies that the Mittag-Leffler condition holds. Tensoring with $\mathbb{Q}_p$ we get a quasi-isomorphism of cosimplicial cooperads over $\mathbb{Q}_p$,
\[C^\bullet(\mathcal{M},\mathbb{Q}_p)\simeq (\mathrm{lim}_n C^\bullet(B\widehat{\parb},\mathbb{Z}/p^n))\otimes_{\mathbb{Z}_p}\mathbb{Q}_p \; .\]
After dualizing, the universal coefficient theorem, gives us a quasi-isomorphism of simplicial operads
\[C_\bullet(\mathcal{M},\mathbb{Q}_p)\simeq C^\bullet(\mathcal{M},\mathbb{Q}_p)^{\vee}\simeq ((\mathrm{lim}_n C^\bullet(B\widehat{\parb},\mathbb{Z}/p^n))\otimes_{\mathbb{Z}_p}\mathbb{Q}_p)^{\vee} \; .\]
We denote by $\mathsf{P}$ the underlying dg-operad of the simplicial operad on the right-hand side. Our goal is to show that the dg-operad $\mathsf{P}$ is formal. By our main theorem, $\mathsf{P}$ has an action of the group $\gt$. We claim that the induced map
\[\gt\to\mathrm{Aut}(H_*(\mathsf{P}))\]
factors as the composite of the cyclotomic character
\[\chi:\gt \to\widehat{\mathbb{Z}}^{\times}\to\mathbb{Z}_p^{\times}\]
with the map
\[\mathbb{Z}_p^{\times}\to\mathrm{Aut}(H_*(\mathsf{P}))\]
sending $u\in\mathbb{Z}_p^{\times}$ to the automorphism $\phi_u$ of $H_*(\mathsf{P})$ that acts as multiplication by $u^n$ in homological degree $n$. Firstly, it is well known that $H_*(\mathsf{P})$ is the operad $\mathsf{BV}$ of Batalin-Vilkovisky algebras. It is generated by a commutative algebra product in arity $2$ and degree $0$ and an operator $\Delta$ in arity $1$ and homological degree $1$. Therefore, it suffices to prove the claim on these two homology groups. It is straightforward that $\gt$ acts trivially on $H_0(\mathsf{P}(2))$. The action of $\gt$ on $B\widehat{\prb(1)}\cong B\widehat{\mathbb{Z}}$ is given precisely by inducing the obvious action of $\widehat{\mathbb{Z}}^\times$ on $\widehat{\mathbb{Z}}$ along the projection $\gt\to\widehat{\mathbb{Z}}^\times$ (see the proof of Proposition \ref{prop: endomorphisms fixing the objects}(2)). It follows that the action of $\gt$ on $H_1(\mathsf{P})$ is the desired action. This proves the claim.

Now, we follow the strategy explained by Petersen in \cite[Proposition p. 819] {petersenformality}. We pick an infinite order unit $u$ in $\mathbb{Z}_p$. Since the cyclotomic character map $\chi:\gt\to\mathbb{Z}_p^{\times}$ is surjective, we can find an automorphism of $\mathsf{P}$ that induces the grading automorphism $\phi_u$ on the homology.
\end{proof}


\section{An action of GT on the operad of compactified moduli spaces}

For $n \geq 3$, the moduli space $\mathcal{M}_{0,n}$ of compact complex algebraic curves of genus zero with $n$ punctures is identified with the space of configurations of $n$ distinct points on the complex projective line $\mathbb{CP}^1$ modulo the action of $PGL_2(\mathbb{C})$. The Deligne-Knudsen-Mumford compactification of $\mathcal{M}_{0,n}$, denoted $\bmod_{0,n}$, is the space of isomorphism classes of stable $n$-punctured complex curves of genus zero. By convention, $\bmod_{0,2} = *$.

The collection of moduli spaces $\bmod_{0,\bullet+1}:=\{\bmod_{0,n+1}\}_{n\geq 1}$ forms an operad in spaces with no arity zero term \cite{GetzlerModuli}. For a curve in $\bmod_{0,n+1}$, we consider the first $n$ points as inputs and the last point as the output. The symmetric group $\Sigma_n$ acts on $\bmod_{0,n+1}$ by permuting the labels of the inputs and leaving the output untouched. Operad composition 
\[
\circ_k : \bmod_{0, n+1} \times \bmod_{0,m+1} \to \bmod_{0,n+m}
\]
is given by attaching the output of $\bmod_{0,m+1}$ to the $k^{th}$ input in $\bmod_{0,n+1}$ and creating a new genus zero stable curve with one additional double point. 

A theorem of Drummond-Cole relates the framed disks operad to the operad $\bmod_{0,\bullet+1}$ via a homotopy pushout diagram
\begin{equation}\label{drummondcole}
\xymatrix{
S^1\ar[d]\ar[r]&\mathsf{FD}\ar[d]\\
\ast\ar[r]&\bmod_{0,\bullet +1}
}
\end{equation}
in the category of operads in spaces, where $S^1$ and $\ast$ denote the topological groups $S^1$ and $\ast$ seen as operads concentrated in arity $1$ and the map $S^1\to\mathsf{FD}$ is the inclusion of arity $1$ operations. Given that the operad $\mathsf{FD}$ is homotopy equivalent to $\mathcal{M}$ we can replace $\mathsf{FD}$ in the homotopy pushout square. 

\begin{prop}
The homotopy groups of $\bmod_{0,n}$ are good groups.
\end{prop}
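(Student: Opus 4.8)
The plan is to reduce the statement to two inputs: the fact that $\bmod_{0,n}$ is a simply connected finite complex, and the fact that finitely generated abelian groups are good.

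First I would record the relevant geometry. The space $\bmod_{0,n}$ is a smooth projective complex variety, so its underlying complex analytic space is a compact manifold and hence has the homotopy type of a finite CW complex; in particular its integral homology is finitely generated in every degree. Moreover, as already noted in the introduction, $\bmod_{0,n}$ is simply connected. Thus $\pi_1(\bmod_{0,n})$ is trivial and, in particular, good.

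Next I would control the higher homotopy groups. Since $\bmod_{0,n}$ is simply connected with finitely generated homology in each degree, Serre's mod-$\mathcal{C}$ theory, applied to the Serre class of finitely generated abelian groups, shows that $\pi_k(\bmod_{0,n})$ is a finitely generated abelian group for every $k \geq 2$. Every such group is a finite product of copies of $\mathbb{Z}$ and of finite cyclic groups. As $\mathbb{Z}$ and finite groups are good, and a finite product of good groups is good (a fact already used in the proof of Proposition \ref{prop:prodcompl}), each $\pi_k(\bmod_{0,n})$ is good. Together with the case $k = 1$, this proves the proposition.

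The only real work lies in establishing finite generation of the higher homotopy groups, and this is precisely where Serre's finiteness theorem enters; simple connectivity is essential here, since it is what allows the mod-$\mathcal{C}$ Hurewicz argument to propagate finite generation from homology to homotopy. Once finite generation is secured, the goodness of finitely generated abelian groups is routine.
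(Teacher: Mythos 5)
Your proof is correct and follows essentially the same route as the paper's: both use that $\bmod_{0,n}$ is a simply connected compact complex manifold to conclude (via Serre's finiteness theorem, which the paper leaves implicit and you spell out) that its homotopy groups are finitely generated abelian, and then decompose such groups as finite products of copies of $\mathbb{Z}$ and finite cyclic groups, each of which is good, using closure of goodness under finite products.
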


\begin{proof}
The spaces $\bmod_{0,n}$ are simply connected compact complex manifolds. As such their homotopy groups are finitely generated abelian groups. More generally, any finitely generated abelian group is good (see for instance \cite[p.5]{sullivangenetics}). Indeed, such a group is a finite product of copies of $\mathbb{Z}$ and $\mathbb{Z}/n$ for various $n$'s.   A finite group is automatically good and $\mathbb{Z}$ is good. The claim then follows from \cite[Proposition 5.10.]{Horel1}.
\end{proof}

\begin{cor}\label{coro:Misop}
The dendroidal profinite space $(N\bmod_{0,\bullet+1})^{\wedge}$ is an $\infty$-operad.
\end{cor}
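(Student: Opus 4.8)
The plan is to deduce this directly from Proposition \ref{prop:Nisop}, whose hypotheses I would verify for the operad $\bmod_{0,\bullet+1}$. Recall that this proposition states: if $\mathsf{P}$ is an operad in spaces such that each $\mathsf{P}(n)$ has finitely many path components and good homotopy groups, then $(N\mathsf{P})^\wedge$ is an $\infty$-operad in profinite spaces. In our situation the relevant operad has arity $n$ space $\bmod_{0,n+1}$, so it suffices to check these two conditions for every $n \geq 1$.

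First I would dispatch the finitely-many-components hypothesis. As recalled in the proof of the preceding proposition, every $\bmod_{0,n+1}$ is a simply connected compact complex manifold; in particular it is connected, hence has exactly one path component. The goodness of the homotopy groups is then exactly the content of the proposition immediately above. Thus both hypotheses of Proposition \ref{prop:Nisop} hold for $\mathsf{P} = \bmod_{0,\bullet+1}$.

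Applying Proposition \ref{prop:Nisop} then yields that $(N\bmod_{0,\bullet+1})^\wedge$ is an $\infty$-operad, which is the assertion. I do not expect any genuine obstacle here: the substantive input -- goodness of the homotopy groups -- has already been established in the preceding proposition, and simple connectivity trivially supplies the finiteness of the set of components. The corollary is therefore essentially a one-line invocation of Proposition \ref{prop:Nisop}, and the only thing worth spelling out is the (immediate) reduction of the finitely-many-components condition to the known simple connectivity of these moduli spaces.
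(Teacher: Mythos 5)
Your proposal is correct and is exactly the paper's argument: the paper deduces the corollary from the goodness proposition immediately preceding it together with Proposition \ref{prop:Nisop}. Your only addition is to spell out the (trivial) finitely-many-components check via simple connectivity, which the paper leaves implicit.
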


\begin{proof}
This follows from the previous proposition and Proposition \ref{prop:Nisop}.
\end{proof}

The main goal of this section is to prove the following. 
\begin{thm}\label{thm: action on bmod}
There exists an action of $\ugt$ on the $\infty$-operad $(N\bmod_{0,\bullet+1})^{\wedge}$ that makes the map 
\[(N\mathcal{M})^{\wedge}\to(N\bmod_{0,\bullet+1})^{\wedge}\]
into a $\ugt$-equivariant map.
\end{thm}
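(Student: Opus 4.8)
The plan is to exhibit $(N\bmod_{0,\bullet+1})^{\wedge}$ as a homotopy pushout assembled from $(N\mathcal{M})^{\wedge}$, and then to propagate the $\ugt$-action across that pushout. By the theorem of Drummond-Cole recorded in the square \eqref{drummondcole}, together with the equivalence $\mathcal{M}\simeq\mathsf{FD}$ of Proposition~\ref{wahlprop}, the operad $\bmod_{0,\bullet+1}$ is the homotopy pushout in $\Op(\S)$ of the span
\[
\ast\longleftarrow S^1\longrightarrow\mathcal{M},
\]
where $S^1$ is the arity-one suboperad $\mathcal{M}(1)=B\mathbb{Z}$ (its arity-one operadic composition is addition of integers, see Definition~\ref{def:surface_operad}, giving $B\mathbb{Z}$ the circle group structure that matches Drummond-Cole's framing circle). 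First I would promote this to a homotopy pushout of $\infty$-operads in profinite spaces, i.e. I claim that
\[
\xymatrix{
(NS^1)^{\wedge}\ar[r]\ar[d]&(N\mathcal{M})^{\wedge}\ar[d]\\
\ast\ar[r]&(N\bmod_{0,\bullet+1})^{\wedge}
}
\]
is a homotopy pushout in $\Op_\infty(\widehat{\S})$.

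To establish this, I would test against an arbitrary $\infty$-operad $Y$ in profinite spaces and compute $\Rmap\bigl((N\bmod_{0,\bullet+1})^{\wedge},Y\bigr)$. Since the homotopy groups of each $\bmod_{0,n}$, of $S^1$, and of each $\mathcal{M}(n)$ are good and these spaces have finitely many components (Proposition~\ref{prop:prodcompl}, Proposition~\ref{prop:Nisop}, Corollary~\ref{coro:Misop}), the adjunction formula of Section~\ref{sec:pro-operads} applies at every corner, so this mapping space is equivalent to $\Rmap(N\bmod_{0,\bullet+1},|Y_f|)$. Using that the dendroidal nerve is homotopically fully faithful (Proposition~\ref{prop:nerve-is-ff}) and that mapping out of the Drummond-Cole homotopy colimit turns it into a homotopy limit of mapping spaces, one rewrites this as the homotopy pullback of $\Rmap(\ast,|Y_f|)$ and $\Rmap(N\mathcal{M},|Y_f|)$ over $\Rmap(NS^1,|Y_f|)$. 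Re-applying the adjunction at each corner identifies this with $\Rmap$ out of the homotopy pushout of the completed span, so $(N\bmod_{0,\bullet+1})^{\wedge}$ corepresents the correct functor and is the desired homotopy pushout.

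Next I would produce a genuine, not merely homotopy-coherent, action of $\ugt$ on the span itself. By Proposition~\ref{prop:operadic definition of gt} and the profinite version of Proposition~\ref{prop: endomorphisms fixing the objects}, $\ugt$ is the honest monoid $\End_0(\widehat{\parb})$ of endomorphisms of $\widehat{\parb}$ fixing objects. Each such endomorphism with parameter $\lambda$ preserves the arity-one suboperad $\widehat{\parb}(1)=\ast/\!/\widehat{\mathbb{Z}}$ and acts there by multiplication by $\lambda$, i.e. through the cyclotomic character (this is the computation $\beta_1=\tau$ in the proof of Proposition~\ref{prop: endomorphisms fixing the objects}); it therefore restricts to a strict action on the inclusion $\ast/\!/\widehat{\mathbb{Z}}\hookrightarrow\widehat{\parb}$, compatibly with the collapse to $\ast$. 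Passing to classifying spaces and dendroidal nerves, and using the comparison of Lemma~\ref{lemm: goodness of pure ribbon braid groups}, yields a strict monoid action of $\ugt$ on the span $\ast\leftarrow(NS^1)^{\wedge}\to(N\mathcal{M})^{\wedge}$ that recovers the action of Theorem~\ref{mainthm} on the right-hand vertex. Since the homotopy pushout is a homotopy colimit, hence a functor of the span, this strict action descends to a $\ugt$-action on $(N\bmod_{0,\bullet+1})^{\wedge}$, and the leg $(N\mathcal{M})^{\wedge}\to(N\bmod_{0,\bullet+1})^{\wedge}$ is equivariant by construction.

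I expect the main obstacle to be the second step: verifying that levelwise profinite completion carries the Drummond-Cole homotopy pushout to the homotopy pushout of the completed span in $\Op_\infty(\widehat{\S})$. The category of profinite spaces is only fibrantly generated and its dendroidal Segal localization is not available as a model structure, so one cannot simply invoke that profinite completion is left Quillen for a localized structure; the corepresentability computation above is designed precisely to sidestep this, and its viability rests on the goodness of the homotopy groups at every corner. A secondary point requiring care is coherence: it is essential that the $\ugt$-action be realized strictly on the span, through the groupoid-level endomorphisms of $\widehat{\parb}$, rather than only up to homotopy, so that passage to the homotopy colimit produces a genuine monoid action and not merely an incoherent family of self-maps.
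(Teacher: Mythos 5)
Your proposal is correct and follows essentially the same route as the paper: express $\bmod_{0,\bullet+1}$ via the Drummond-Cole pushout, show levelwise profinite completion preserves this pushout by testing the universal property against arbitrary $\infty$-operads in profinite spaces (using goodness at each corner), and then propagate the $\ugt$-action from $(N\mathcal{M})^{\wedge}$ through the span by restricting to the arity-one part. Your explicit insistence that the action be realized strictly on the span via $\End_0(\widehat{\parb})$ is a point the paper's proof leaves implicit (it is hidden in the identification $\ugt\cong\End_0(\widehat{\parb})$ underlying Theorem~\ref{mainthm}), but it is the same mechanism, not a different argument.
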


\begin{proof}
The functor $N$ is a right Quillen equivalence (Theorem \ref{thm:CM}) and hence it preserves homotopy pushout squares. Therefore, the square
\[
\xymatrix{
N S^1\ar[d]\ar[r]& N\mathcal{M}\ar[d]\\
\ast\ar[r]& N\bmod_{0,\bullet+1}
}
\]
is a pushout square of $\infty$-operads in spaces. Applying the profinite completion functor levelwise, we obtain a square of dendroidal objects in profinite spaces. In fact, each term of this new square is an $\infty$-operad in profinite spaces, by Proposition \ref{prop:Nisop}. We claim that this square of $\infty$-operads is a pushout square in the $\infty$-category (relative category) of $\infty$-operads in profinite spaces. This is a consequence of the following formal observation. Given a pushout square in the $\infty$-category of $\infty$-operads in spaces, consider the resulting square obtained by applying profinite completion levelwise. Then this square is a pushout in the $\infty$-category of $\infty$-operads in profinite spaces if each of its terms is an $\infty$-operad. To see this, one can use the fact that the hypothetical pushout has the correct universal property in the $\infty$-category of $\infty$-operads in profinite spaces.

We can now prove the statement of the theorem. The top horizontal map of the square is the inclusion of arity one operations. It follows that the action of $\ugt$ on $(N\mathcal{M})^{\wedge}$ restricts to an action of $(NS^1)^{\wedge}$ in such a way that this map becomes a $\ugt$-equivariant map. On the other hand, the map $(NS^1)^{\wedge}\to \ast$ is obviously $\ugt$-equivariant for the trivial action on $\ast$. It follows that the $\infty$-operad $(N\bmod_{0,\bullet+1})^{\wedge}$ inherits a $\ugt$ action that makes the square $\ugt$-equivariant.
\end{proof}

We now want to prove that the action constructed in the previous theorem is non-trivial. In order to do so, we will prove that this action is non-trivial after application of $H_*(-,\mathbb{Q}_p)$. First we need to explain what we mean by $H_*(X,\mathbb{Q}_p)$ when $X$ is a profinite space. 

\begin{cons}\label{cons: functor D}
We have explained in the previous section how to construct a cosimplicial $\mathbb{Z}/p^n$-module $C^\bullet(X,\mathbb{Z}/p^n)$. We can then define $C^*(X,\mathbb{Z}/p^n)$ as the associated cochain complex. Define the chain complex
\[D_*(X,\mathbb{Q}_p):=((\lim_n C^*(X,\mathbb{Z}/p^n))\otimes_{\mathbb{Z}_p}\mathbb{Q}_p)^{\vee} \; .\]
As in the proof of Theorem \ref{thm: formality}, one can show that $D_*(\widehat{Y},\mathbb{Q}_p)$ is naturally quasi-isomorphic to $C_*(Y,\mathbb{Q}_p)$ when $Y$ is a space with finitely generated homology. We denote the homology of $D_*(X,\mathbb{Q}_p)$ by $H_*(X,\mathbb{Q}_p)$. Since $C^*(-,\mathbb{Z}/p^n)$ sends homotopy colimits to homotopy limits, we deduce that $D_*(-,\mathbb{Q}_p)$ preserves homotopy colimits.
\end{cons}

To make explicit the action of $\gt$ on $H_*(\widehat{\mathcal{M}},\mathbb{Q}_p)$ we use the $p$-adic cyclotomic character $\chi_p:\gt\to \widehat{\mathbb{Z}}^{\times}\to\mathbb{Z}_p^\times$. 

\begin{prop}\label{prop: action on H}
Let $g$ be an element of $\gt$. Then the action of $g$ on the vector space $H_i(\widehat{\mathcal{M}(n)},\mathbb{Q}_p)$ is given by multiplication by $\chi_p(g)^{i}$.
\end{prop}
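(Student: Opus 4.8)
The plan is to reduce the statement to the computation already carried out in the proof of Theorem~\ref{thm: formality}, and then to propagate it from the operadic generators to all of $H_*$ using the fact that the homology operad is generated in low arity. First I would record that, by Construction~\ref{cons: functor D}, the homology $H_*(\widehat{\mathcal{M}(n)},\mathbb{Q}_p)$ is naturally identified with $H_*(\mathcal{M}(n),\mathbb{Q})\otimes_{\mathbb{Q}}\mathbb{Q}_p$, since $\mathcal{M}(n)\simeq B\prb(n)$ has finitely generated homology. Under this identification the collection $\{H_*(\widehat{\mathcal{M}(n)},\mathbb{Q}_p)\}_n$ is the Batalin--Vilkovisky operad $\mathsf{BV}$ base-changed to $\mathbb{Q}_p$, because $\mathcal{M}\simeq\mathsf{FD}$ by Proposition~\ref{wahlprop}. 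The $\gt$-action on $\widehat{\mathcal{M}}$ provided by Theorem~\ref{mainthm} is by maps of $\infty$-operads, so applying $D_*(-,\mathbb{Q}_p)$ levelwise produces, for each $g\in\gt$, a graded operad automorphism of $\mathsf{BV}\otimes_{\mathbb{Q}}\mathbb{Q}_p$; in particular it is degree-preserving, acting separately on each $H_i(\widehat{\mathcal{M}(n)},\mathbb{Q}_p)$.

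Next I would pin down the action on the two operadic generators of $\mathsf{BV}$: the commutative product $m\in H_0(\mathcal{M}(2))$, of arity $2$ and degree $0$, and the operator $\Delta\in H_1(\mathcal{M}(1))$, of arity $1$ and degree $1$. This is exactly the content established inside the proof of Theorem~\ref{thm: formality}: the group $\gt$ acts trivially on $H_0(\mathcal{M}(2))$, so $m$ is fixed; and the $\gt$-action on $\widehat{\mathcal{M}(1)}\cong B\widehat{\mathbb{Z}}$ is induced by the action of $\widehat{\mathbb{Z}}^{\times}$ on $\widehat{\mathbb{Z}}$ through the projection $\gt\to\widehat{\mathbb{Z}}^{\times}$ (compare the proof of Proposition~\ref{prop: endomorphisms fixing the objects}(2)), so that $g$ scales $\Delta$ by $\chi_p(g)$. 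Thus $g$ acts on $m$ by $\chi_p(g)^0$ and on $\Delta$ by $\chi_p(g)^1$, matching the claimed formula on the generators.

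Finally I would upgrade this to all of $H_*$ by generation. The operad $\mathsf{BV}$ is generated, via operadic composition, units, symmetric actions and $\mathbb{Q}_p$-linear combinations, by $m$ and $\Delta$; hence every class in $H_i(\widehat{\mathcal{M}(n)},\mathbb{Q}_p)$ is a $\mathbb{Q}_p$-linear combination of such composites. Operadic composition is additive in homological degree, and $m$ contributes $0$ while $\Delta$ contributes $1$, so any composite lying in degree $i$ involves exactly $i$ factors of $\Delta$. Since $g$ acts by an operad automorphism fixing each $m$ and multiplying each $\Delta$ by $\chi_p(g)$, it multiplies such a composite by $\chi_p(g)^i$; as every class of degree $i$ is a linear combination of composites of degree $i$, this gives multiplication by $\chi_p(g)^i$ on all of $H_i(\widehat{\mathcal{M}(n)},\mathbb{Q}_p)$.

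The main obstacle is the second step, namely correctly identifying the action on the degree-one generator $\Delta$: one must control the $\gt$-action on the arity-one circle $\widehat{\mathcal{M}(1)}\cong B\widehat{\mathbb{Z}}$ and verify that it is multiplication by the cyclotomic character rather than some other power. This is precisely where the twist element $\tau$ of $\widehat{\parb}$ and its interaction with the braiding, as analyzed in Proposition~\ref{prop: endomorphisms fixing the objects}, enter. Once the generators are understood, the remaining steps are formal consequences of the additivity of degree under operadic composition and the generation of $\mathsf{BV}$.
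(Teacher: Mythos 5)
Your proposal is correct and follows essentially the same route as the paper: the paper's proof of this proposition simply points back to the argument inside the proof of Theorem~\ref{thm: formality}, which identifies $H_*(\widehat{\mathcal{M}},\mathbb{Q}_p)$ with the $\mathsf{BV}$ operad, checks the action on the two generators (trivially on the product in $H_0(\mathcal{M}(2))$, by $\chi_p$ on $\Delta$ via the arity-one circle $B\widehat{\mathbb{Z}}$ and Proposition~\ref{prop: endomorphisms fixing the objects}(2)), and concludes by operadic generation. Your write-up merely makes explicit the degree-counting step (a degree-$i$ composite contains exactly $i$ factors of $\Delta$) that the paper leaves implicit.
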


\begin{proof}
This vector space is isomorphic as a $\gt$-representation to the vector space $H_i(\mathsf{P}(n))$ appearing in the proof of Theorem \ref{thm: formality}. The desired statement can be found in that proof.
\end{proof}

\begin{prop}
The action of $\gt$ on $\bmod_{0,\bullet +1}$ is non-trivial.
\end{prop}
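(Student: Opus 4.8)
The plan is to deduce non-triviality from the action on $H_*(\widehat{\mathcal M},\mathbb Q_p)$ already computed in Proposition~\ref{prop: action on H}, by transporting it across the Drummond--Cole square. First I would restrict the $\ugt$-action of Theorem~\ref{thm: action on bmod} to the group of units $\gt$ and apply the functor $D_*(-,\mathbb Q_p)$ of Construction~\ref{cons: functor D} to the $\gt$-equivariant pushout of $\infty$-operads produced in the proof of that theorem. Since $D_*(-,\mathbb Q_p)$ preserves homotopy colimits and satisfies $D_*(\widehat Y,\mathbb Q_p)\simeq C_*(Y,\mathbb Q_p)$ for spaces with finitely generated homology, this yields a $\gt$-equivariant homotopy pushout of dg-operads over $\mathbb Q_p$
\[
\xymatrix{
C_*(S^1,\mathbb{Q}_p)\ar[r]\ar[d]&C_*(\mathcal{M},\mathbb{Q}_p)\ar[d]\\
\mathbb{Q}_p\ar[r]&C_*(\bmod_{0,\bullet+1},\mathbb{Q}_p)
}
\]
By Proposition~\ref{prop: action on H}, $\gt$ acts on $H_i(\mathcal M(n),\mathbb Q_p)$ by $\chi_p(g)^i$; in particular the fundamental class in $H_1(C_*(S^1,\mathbb Q_p))$, which maps to the Batalin--Vilkovisky operator $\Delta$ of $\mathsf{BV}=H_*(\mathcal M,\mathbb Q_p)$, transforms with weight $\chi_p(g)$, while the action on the terminal corner $\mathbb Q_p$ is trivial.

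Next I would read off the homology of the pushout, which exhibits $C_*(\bmod_{0,\bullet+1},\mathbb Q_p)$ as the operad obtained by homotopically killing the circle in arity one, i.e. the hypercommutative operad $H_*(\bmod_{0,\bullet+1},\mathbb Q_p)$ presented as the homotopy quotient of $\mathsf{BV}$ by $\Delta$. Concretely, a cofibrant model is obtained from $C_*(\mathcal M,\mathbb Q_p)$ by adjoining a single arity-one generator $t$ in homological degree $2$ with $dt=\Delta$. Because the differential is $\gt$-equivariant and $\Delta$ has weight $\chi_p$, the generator $t$ must carry weight $\chi_p$ as well. A key preliminary observation is that the map $H_*(\mathcal M(n),\mathbb Q_p)\to H_*(\bmod_{0,n+1},\mathbb Q_p)$ vanishes in positive degrees: it factors through the homology of the open moduli space $\mathcal M_{0,n+1}$, and for $n=3$ this open space is $\mathbb{CP}^1\setminus\{0,1,\infty\}$, whose $H_2$ vanishes. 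Hence the positive-degree homology of $\bmod$ is genuinely new, created by $t$.

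It then suffices to exhibit one new class and compute its weight. I would take $n=3$, where $\bmod_{0,4}\cong\mathbb{CP}^1\cong S^2$ and $H_2(\bmod_{0,4},\mathbb Q_p)\cong\mathbb Q_p$ is one-dimensional, spanned by the hypercommutative generator $\nu_3$. By the previous paragraph this class does not lift to $H_2(\mathcal M(3),\mathbb Q_p)$, and the only arity-one degree-raising generator in the model is $t$; therefore $\nu_3$ is represented by a cycle linear in $t$, so the line $H_2(\bmod_{0,4},\mathbb Q_p)$ is a $\chi_p$-eigenline. Choosing $g\in\gt$ with $\chi_p(g)$ of infinite order in $\mathbb Z_p^{\times}$, which is possible because $\chi_p$ is surjective, gives $\chi_p(g)\neq 1$, so $g$ acts non-trivially on $H_2(\bmod_{0,4},\mathbb Q_p)$ and hence on $\bmod_{0,\bullet+1}$.

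The main obstacle will be the second step: controlling the operadic homotopy pushout finely enough to certify that the generator of $H_2(\bmod_{0,4},\mathbb Q_p)$ genuinely involves the coning variable $t$, and is not instead the image of a class from $H_2(\mathcal M(3),\mathbb Q_p)$ (which would carry weight $\chi_p^2$). This is precisely where the vanishing of the map from the open moduli is essential: it forces the single surviving class to originate from $t$ and thus to have weight $\chi_p$. Establishing this vanishing and identifying the Drummond--Cole model together with enough of its $\gt$-equivariant structure to track weights is the technical heart of the argument; everything else is formal, following from the fact that $D_*(-,\mathbb Q_p)$ preserves homotopy colimits and carries the equivariant square above.
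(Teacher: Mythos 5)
Your overall strategy --- transporting the weights from Proposition \ref{prop: action on H} across the Drummond--Cole square --- is the same as the paper's, but the route you take through what you yourself call the technical heart has a genuine gap: the claimed cofibrant model is wrong. Adjoining a \emph{single} arity-one generator $t$ of degree $2$ with $dt=\Delta$ to $C_*(\mathcal{M},\mathbb{Q}_p)$ does not present the operadic homotopy pushout. Homotopically trivializing the circle is not the same as coning off its fundamental class: a cofibrant replacement of the terminal operad under $C_*(S^1,\mathbb{Q}_p)\simeq\Lambda(\Delta)$ requires free generators in every even degree (the cells of $BS^1$). Indeed, in arity one your model is the dg algebra $\Lambda(\Delta)\langle t\rangle$ with $dt=\Delta$, whose $H_3$ is one-dimensional, spanned by $[\Delta t]=-[t\Delta]$ (the only boundary in degree $3$ is $d(t^2)=\Delta t+t\Delta$), whereas the arity-one space of $\bmod_{0,\bullet+1}$ is $\bmod_{0,2}=\ast$; this defect propagates to all arities. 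This is precisely the subtlety Drummond-Cole's theorem addresses. Relatedly, even with a corrected model (generators $t_i$ of degree $2i$), your weight bookkeeping happens at the chain level: you assign weight $\chi_p$ to $t$ because ``the differential is $\gt$-equivariant''. But the action produced by Theorem \ref{thm: action on bmod} is an action in the homotopy category of $\infty$-operads; nothing makes it act on your chosen cofibrant dg model by maps preserving the generators, so ``the weight of $t$'' is not a well-defined notion without further rigidification.

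The paper resolves both issues at once by replacing the pushout with the simplicial bar resolution $[s]\mapsto \mathcal{M}\sqcup(S^1)^{\sqcup s}\sqcup\ast$, whose homotopy colimit is sifted, hence computed aritywise; applying $D_*(-,\mathbb{Q}_p)$ of Construction \ref{cons: functor D} aritywise then yields a $\gt$-equivariant spectral sequence $E^1_{s,t}=H_t((\ast\sqcup(S^1)^{\sqcup s}\sqcup\mathcal{M})(n),\mathbb{Q}_p)\Rightarrow H_{s+t}(\bmod_{0,n+1},\mathbb{Q}_p)$, on whose $E^1$-page the weights are pure (equal to $\chi_p^t$, by K\"unneth and Proposition \ref{prop: action on H}). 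Equivariance forces collapse at $E^2$, and a comparison with the analogous spectral sequence for $\mathsf{Com}$ kills the $t=0$ line, so every positive-degree class of $H_*(\bmod_{0,n+1},\mathbb{Q}_p)$ carries a filtration whose graded pieces are nontrivial characters. Your observation that $H_2(\mathcal{M}(3),\mathbb{Q}_p)\to H_2(\bmod_{0,4},\mathbb{Q}_p)$ vanishes because it factors through $H_2$ of the open moduli space $\mathcal{M}_{0,4}$ is correct (granting that the map in the square is aritywise the geometric one) and plays the role that the comparison with $\mathsf{Com}$ plays in the paper; and your conclusion that $H_2(\bmod_{0,4},\mathbb{Q}_p)$ is a $\chi_p$-eigenline is consistent with the paper's filtration. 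So the endgame of your argument is fine; what is missing is a rigorous substitute for your second step, and the bar-resolution spectral sequence is exactly that substitute.
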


\begin{proof}
A standard argument with simplicial model categories applied to the homotopy pushout square (\ref{drummondcole}) tells us that the operad $\bmod_{0,\bullet+1}$ is the homotopy colimit of the simplicial diagram
\[[n]\mapsto \mathcal{M}\sqcup (S^1)^{\sqcup n}\sqcup \ast\]
where $\sqcup$ denotes the coproduct in the category of operads. Applying the dendroidal nerve functor followed by the profinite completion we get a simplicial diagram
\[[n]\mapsto N(\mathcal{M}\sqcup (S^1)^{\sqcup n}\sqcup \ast)^{\wedge}\]
in the relative category $\mathbf{d}\widehat{\mathbf{S}}$ whose homotopy colimit computes $(N\bmod_{0,\bullet+1})^\wedge$. Indeed, since the category $\Delta$ is sifted, this homotopy colimit coincides with the homotopy colimit computed in $\mathbf{Op}_{\infty}(\widehat{\mathbf{S}})$. 

Evaluating at the corolla $C_n$, we get a simplicial profinite space
\[[n]\mapsto N(\mathcal{M}\sqcup (S^1)^{\sqcup n}\sqcup \ast)^{\wedge}_{C_n}\]
whose homotopy colimit is $(\bmod_{0,n+1})^\wedge$. We can hit this diagram with the functor $D_*(-,\mathbb{Q}_p)$ constructed in \ref{cons: functor D} and we get a simplicial chain complex
\[[n]\mapsto D_*(N(\mathcal{M}\sqcup (S^1)^{\sqcup n}\sqcup \ast)^{\wedge}_{C_n},\mathbb{Q}_p)\]
whose homotopy colimit is $D_*((\bmod_{0,n+1})^\wedge,\mathbb{Q}_p)$. This simplicial diagram has an action of $\gt$ that induces the action of $\gt$ on $D_*((\bmod_{0,n+1})^\wedge,\mathbb{Q}_p)$ constructed in Theorem \ref{thm: action on bmod}. We thus get a $\gt$-equivariant spectral sequence of the form
\[E^1_{s,t}=H_t((\ast\sqcup (S^1)^{\sqcup s} \sqcup \mathcal{M})(n),\mathbb{Q}_p)\implies H_{s+t}(\bmod_{0,n +1},\mathbb{Q}_p)\]
By the K\"unneth isomorphism, and Proposition \ref{prop: action on H}, we deduce that the action of $g\in \gt$ on $E^1_{s,t}$ is given by multiplication by $\chi_p(g)^t$. This implies that the same is true for $E^r_{s,t}$, for all $r$. Since the differentials must commute with the $\gt$-action and $d^r$ has degree $(-r,r-1)$, we see that only $d^1$ can be non-zero.

Now we study the behavior of the line $t=0$ of this spectral sequence. We can do that by comparing it to the similar spectral sequence denoted $E'^r_{s,t}$ that computes the pushout square of simplicial operads
\[
\xymatrix{
\ast\ar[d]\ar[r]& \mathsf{Com}\ar[d]\\
\ast\ar[r]& \mathsf{Com}
}
\]
We have a map of spectral sequences $E\to E'$ that comes from a map between the two pushout squares. This map is an isomorphism on the line $t=0$ at the $E^1$ page. The spectral sequence $E'$ is very easy to understand: the differential $d^1$ kills everything except the generator in degree $(0,0)$ and no further differentials can occur. Therefore, the same patterns must occur on the $0$-th line of the spectral sequence $E$.

We have said that $E$ collapses at the $E^2$ page. Using the observation that $E^2_{s,0}$ is trivial for $s>0$, we deduce that for positive $k$ the vector space $H_k((\bmod_{0,n+1})^{\wedge},\mathbb{Q}_p)$ has a filtration which is compatible with the $\gt$ action and that the action of $\gt$ on the associated graded splits as a direct sum of representations that are non-trivial.
\end{proof}



\section{Unital case}\label{sec:unital}

In this section, we allow operads to have \emph{non-empty} space of $0$-arity operations. For us the most relevant such operad is $\mathcal{M}_+$, the variant of the genus zero surface operad where $\mathcal{M}_+(0) = *$. The essential difference between $\mathcal{M}$ and $\mathcal{M}_+$ is that in the latter we include the operation of filling in boundary components. While this is a substantial difference, we show that it does not affect the monoid of derived endomorphisms:

\begin{thm}\label{thm:mainunital}
$\ugt \cong \pi_0 \REnd(\widehat{N\mathcal{M}_+})$
\end{thm}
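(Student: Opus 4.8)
The plan is to repeat the proof of Theorem \ref{mainthm} with every operad replaced by its unital variant. The first step is to produce a combinatorial model for $\mathcal{M}_+$: the \emph{unital parenthesized ribbon braid operad} $\parb_+$, which has $\parb_+(0)=\ast$ and agrees with $\parb$ in positive arities, subject to the relations expressing that filling in a leg of the pair of pants $\boldsymbol{m}$ by the nullary operation $e$ returns the operadic unit in arity one. As with $B\parb\simeq\mathcal{M}$, one checks $B\parb_+\simeq\mathcal{M}_+$. A subtlety appears immediately: the operad of objects of $\parb_+$ is no longer free (the unit relations obstruct freeness), so the criterion of Proposition \ref{prop: free op gp} does not apply directly. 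I would circumvent this by remembering only the extra structure, in the spirit of Fresse's $\Lambda$-operads \cite{FresseBook1}: view $\parb_+$ as the free operad $\parb$ equipped with the restriction operators $\delta_i=(-)\circ_i e\colon\parb(n)\to\parb(n-1)$ that forget the $i$-th strand together with its twist. The underlying operad $\parb$ remains cofibrant, and a unital endomorphism is precisely an endomorphism of $\parb$ commuting with the $\delta_i$.

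Next I would carry out the homotopical reduction of Proposition \ref{prop:hoend} and Lemma \ref{lemm: goodness of pure ribbon braid groups} essentially verbatim. The pure ribbon braid groups $\prb(n)\cong\pb(n)\times\mathbb{Z}^n$ are good, the arity zero term $\ast$ is trivially good, so Proposition \ref{prop:Nisop} shows $(N\mathcal{M}_+)^{\wedge}$ is an $\infty$-operad and that the comparison map to $BN\widehat{\parb_+}$ is a weak equivalence. Homotopical full faithfulness of the nerve and classifying space functors then identifies $\pi_0\REnd(\widehat{N\mathcal{M}_+})$ with the monoid $\hoEnd(\widehat{\parb_+})$ of homotopy classes of unital endomorphisms of $\widehat{\parb_+}$.

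It remains to compute $\hoEnd(\widehat{\parb_+})$, following Propositions \ref{prop:0toho} and \ref{prop: endomorphisms fixing the objects}. The reduction to object-fixing endomorphisms goes through unchanged, so it suffices to prove that restriction induces an isomorphism $\End_0(\widehat{\parb_+})\xrightarrow{\cong}\End_0(\widehat{\parb})$. By Lemma \ref{lem:maps_parb} an object-fixing endomorphism of $\widehat{\parb}$ is given by a triple $(\beta,\alpha,\tau)$; it automatically fixes $e$ and the operadic unit, so the unit relations $\boldsymbol{m}\circ_i e=\mathrm{id}$, being relations among objects and the operadic unit, are preserved and impose no new constraint. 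What must be checked is that such an endomorphism commutes with the restriction operators $\delta_i$. On $\beta$ and $\tau$ this is immediate, since $\delta_i$ sends them into arity one or into the point $\ast$. For the associator the point is that $\delta_i$ kills the profinite free group element $f$: under a strand-forgetting homomorphism $\widehat{\pb}(3)\to\widehat{\pb}(2)$ one generator is sent to the identity, and since $f$ lies in the commutator subgroup of $\widehat{\mathbb{F}}_2$ (as guaranteed by relation (III) of Definition \ref{defn:GT}), its image is trivial. Hence compatibility holds, restriction is an isomorphism, and combining with Proposition \ref{prop:operadic definition of gt} and (\ref{eq:gtisho}) yields $\ugt\cong\End_0(\widehat{\parb_+})\cong\hoEnd(\widehat{\parb_+})\cong\pi_0\REnd(\widehat{N\mathcal{M}_+})$.

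The step I expect to require the most care is the passage through the unital combinatorial model: because $\mathcal{M}_+$ is genuinely unital its object operad fails to be free, so one cannot invoke Proposition \ref{prop: free op gp} naively, and the arguments of Section \ref{maps_parb} must be reorganized around the $\Lambda$-structure on $\parb$ rather than around $\parb_+$ itself. Once this is set up, the only genuinely new computation — the compatibility of the $\ugt$-action with the restriction operators — reduces, as indicated above, to the fact that the associator component $f$ lies in the commutator subgroup, and the ribbon decoration contributes nothing since twists are simply discarded by the $\delta_i$.
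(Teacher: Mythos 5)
Your algebraic core is sound: the reduction of the unit compatibility to the statement that the strand-forgetting maps kill the associator is correct, and your argument for it (one generator of $\widehat{\pb}(3)$ dies under $\partial_i$, and $f$ lies in the commutator subgroup of $\widehat{\mathbb{F}}_2$ by relation (III)) is a valid alternative to the paper's, which instead applies $\partial_2$ to the hexagon relation (\ref{hexigon_rel_1}) and deduces $\partial_2(\alpha)=0$ from the identity $0+n+\beta=n+\beta+n$ in $\pb(2)$ (Proposition \ref{prop: plus end to ho}). The genuine gap is in the homotopical step. You assert that Proposition \ref{prop:hoend} applies ``essentially verbatim'' to identify $\pi_0 \REnd(\widehat{N\mathcal{M}_+})$ with $\hoEnd(\widehat{\parb}_+)$, the monoid of strict endomorphisms of $\widehat{\parb}_+$ up to homotopy. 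But the proof of Proposition \ref{prop:hoend} uses cofibrancy of the operad in an essential way: cofibrancy is what lets the non-derived mapping space $\map(\mathsf{P},|\widehat{\mathsf{P}}|)$ compute the derived one. Since $\parb_+$ is not cofibrant in $\Op(\G)$ (as you note), $\hoEnd(\widehat{\parb}_+)$ does not a priori compute $\pi_0\REnd$; establishing exactly that identification is the hard content of the unital case. The paper does it by constructing the explicit cofibrant replacement $v:\parb_\star\to\parb_+$ and proving by hand that $\End_0(\parb_+)\to\Hom_0(\parb_\star,\parb_+)$ and $\Hom_0(\parb_\star,\parb_+)\to\hoHom(\parb_\star,\parb_+)$ are bijections (Lemmas \ref{lem:stupidhardlemma} and \ref{lem: easylemma}); nothing in your proposal plays the role of these two lemmas.

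Your proposed substitute, reorganizing around the restriction operators $\delta_i$ in the spirit of Fresse's $\Lambda$-operads, is in principle workable (it is essentially Fresse's route), but as written it only yields the strict, $1$-categorical statement that unital endomorphisms of $\parb_+$ are endomorphisms of $\parb$ commuting with the $\delta_i$. To convert this into a computation of $\pi_0\REnd(\widehat{N\mathcal{M}_+})$ you would need two things that are not supplied: (i) a homotopy theory of $\Lambda$-operads in profinite groupoids in which $\parb$, with its $\Lambda$-structure, is cofibrant --- and cofibrancy of the underlying operad is not the same as cofibrancy in whatever Reedy-type structure you set up, so this must be checked; and (ii) a comparison identifying derived mapping spaces in that framework with $\REnd$ computed in the dendroidal category $\Op_{\infty}(\widehat{\S})$, where the theorem is stated. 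Point (ii) is not formal: unitary operads form a full relative subcategory of $\Op(\G)$, but localization does not in general preserve mapping spaces between objects of a full subcategory, and the discrepancy between maps out of $\parb_+$ and maps out of its genuine cofibrant resolution $\parb_\star$ is precisely what Lemma \ref{lem:stupidhardlemma} controls. Without (i) and (ii), the final links in your chain $\ugt\cong\End_0(\widehat{\parb}_+)\cong\hoEnd(\widehat{\parb}_+)\cong\pi_0\REnd(\widehat{N\mathcal{M}_+})$ are unjustified.
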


There are two relevant unital variants of the operad $\parb$. The first one is an operad $\parb_+$ which coincides with $\parb$ in positive arities and is a point in arity zero. The operadic composition maps $\circ_i$ of the form
\[
\parb_+(n) \times \parb_+(0) \to \parb_+(n-1)
\]
are given (on morphism sets) by removing the $i^{th}$ strand. We do not expect the operad $\parb_+$ to be cofibrant in any reasonable sense. So we introduce a second operad, denoted $\parb_\star$, which is cofibrant as a monochromatic operad in groupoids and is equivalent to $\parb_+$. In detail, the operad $\ob \parb_\star$ of objects of $\parb_\star$ is the free operad on the (non-symmetric) sequence which is a point in degree $0$ and $2$ and empty otherwise. We think of an element in $\parb_\star(n)$ as a parenthesization of a word $a_1 \dots a_\ell$ where each $i \in \{1, \dots,n\}$ occurs exactly once as one of the $a_j's$ and all the other symbols are labelled $*$. (Alternatively, that element may be regarded as a tree with $n$ leaves where the vertices have either two inputs or no inputs.) For example, $((*(1*))((**)(32)))$ is an element in $\ob \parb_\star(3)$.

There is a canonical map of operads $u : \ob \parb_\star \to \ob \parb_+$ that drops the symbols $*$ (and the appropriate parenthesis). For instance, the element in the example just above is sent to $(1(32))$ via this map. 

Given two objects $x$ and $y$ in $\parb_\star$, the set of morphisms in $\parb_\star$ from $x$ to $y$ is by definition the set of morphisms in $\parb_+$ from $u(x)$ to $u(y)$. This defines the operad $\mor \parb_\star$ and thus the operad $\parb_\star$ together with a map $v:\parb_\star\to\parb_+$.

\begin{prop}
The map $v$ is a cofibrant replacement of $\parb_+$ in $\Op(\G)$.
\end{prop}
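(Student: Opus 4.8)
The plan is to verify that $v:\parb_\star\to\parb_+$ is both a cofibration and a weak equivalence in $\Op(\G)$, so that it qualifies as a cofibrant replacement. The cofibrancy will follow from the combinatorial criterion of Proposition \ref{prop: free op gp}, while the weak equivalence will be checked aritywise using the definition of $\parb_\star$'s morphisms.

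First I would establish cofibrancy. By Proposition \ref{prop: free op gp}, it suffices to observe that $\ob\parb_\star$ is free as an operad in $\Set$. By construction, $\ob\parb_\star$ is the free operad on the non-symmetric sequence consisting of a single point in arities $0$ and $2$ (and empty otherwise), so this is immediate. This shows $\parb_\star$ is cofibrant; since every object of $\Op(\G)$ receives a map from the initial object, and the initial operad is cofibrant, it remains only to check that $v$ itself is a cofibration, which again reduces to $\ob(v)$ having the appropriate left lifting property. Since the source $\ob\parb_\star$ is free, the map $\varnothing\to\parb_\star$ is a cofibration; because we only need a cofibrant replacement of $\parb_+$ (not that $v$ is a cofibration), establishing that $\parb_\star$ is cofibrant suffices on this front.

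Next I would prove that $v$ is a weak equivalence, i.e. that $v(n):\parb_\star(n)\to\parb_+(n)$ is an equivalence of groupoids for each $n$. By the very definition of the morphism sets of $\parb_\star$, the map $v$ is fully faithful: for objects $x,y$ of $\parb_\star(n)$, the map on hom-sets is the identity $\Hom_{\parb_+}(u(x),u(y))\to\Hom_{\parb_+}(u(x),u(y))$. Thus it remains to check that $v(n)$ is essentially surjective. For this I would argue that the object map $u:\ob\parb_\star(n)\to\ob\parb_+(n)$ is surjective: any object of $\parb_+(n)$ is a parenthesized word in $\{1,\dots,n\}$, and by inserting $*$-labelled leaves (for instance, not inserting any) one obtains a preimage under $u$. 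Essential surjectivity is then even stronger than needed, since $u$ is literally surjective on objects. Combining full faithfulness with surjectivity on objects gives that each $v(n)$ is an equivalence of groupoids.

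The main subtlety I anticipate is not in either individual check but in confirming that $v$ is genuinely a map of operads compatible with the unital structure — in particular that the composition maps $\circ_i:\parb_\star(n)\times\parb_\star(0)\to\parb_\star(n-1)$, which on $\parb_+$ correspond to removing the $i^{th}$ strand, are correctly transported through the definition of $\parb_\star$'s morphisms via $u$. One must verify that $u$ intertwines the free-operad composition on $\ob\parb_\star$ with the composition on $\ob\parb_+$ (including the $0$-arity insertions that drop a $*$ and collapse parentheses), and that the induced identifications on morphism sets are functorial with respect to these compositions. Once this compatibility is in place, the two homotopical properties above combine to show that $v$ exhibits $\parb_\star$ as a cofibrant replacement of $\parb_+$.
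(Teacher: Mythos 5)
Your proof is correct and follows essentially the same route as the paper: cofibrancy of $\parb_\star$ via Proposition \ref{prop: free op gp} applied to the free operad $\ob\parb_\star$, and the levelwise equivalence checked directly (your fully-faithful-plus-surjective-on-objects argument just spells out what the paper dismisses as ``clearly a levelwise weak equivalence''). The brief detour in your cofibrancy paragraph about whether $v$ itself must be a cofibration is unnecessary but you correctly resolve it: a cofibrant source together with a weak equivalence is all that is required.
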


\begin{proof}
This map is clearly a levelwise weak equivalence. Moreover, since the operad $\ob\parb_\star$ is freely generated by an operation in degree $0$ and an operation in degree $2$, we can apply Proposition \ref{prop: free op gp} and conclude that $\parb_\star$ is cofibrant.
\end{proof}

The following two lemmas are variations on Lemma \ref{lem:maps_pab} and Lemma \ref{lem:maps_parb}.

\begin{lemma}\label{lem:maps_pabplus} Let $\mathsf{P}$ be an operad in groupoids with $\mathsf{P}(0) = *$ and $\ob\mathsf{P}(1) = *$. The set of operad maps from $\pab_+$ to $\mathsf{P}$ is identified with the set of maps $g = (m,\beta, \alpha) : \pab \to \mathsf{P}$ subject to the relation $\alpha \circ_i id_* = id_{m}$ for $i = 1,2,3$, where $id_*$ denotes the identity element of $* \in \mathsf{P}(0)$.
\end{lemma}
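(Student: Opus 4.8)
The plan is to analyze operad maps out of $\pab_+$ by separating the positive-arity data from the nullary structure. First I would observe that any operad map $G : \pab_+ \to \mathsf{P}$ restricts, on positive arities, to an operad map $\pab \to \mathsf{P}$; by Lemma~\ref{lem:maps_pab} this restriction is precisely a triple $g = (m,\beta,\alpha)$ satisfying the pentagon and hexagon relations. Since $\mathsf{P}(0) = *$ is terminal (a single object and a single morphism), the value of $G$ on the nullary generator $* \in \pab_+(0)$ is forced to be $id_*$, so $G$ carries no extra data beyond $g$. The only remaining content is that $G$ be compatible with the nullary composition maps $\circ_i : \pab_+(n)\times\pab_+(0)\to\pab_+(n-1)$, which on morphisms are given by deleting the $i$-th strand.

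Next I would extract the necessary relations. In $\pab_+$ the associator $\boldsymbol{\alpha}$, whose underlying braid is trivial, restricts under strand deletion to an identity: removing any one of the three strands collapses both its source $(12)3$ and its target $1(23)$ to the same object of arity two and sends the trivial braid to a trivial braid, so $\boldsymbol{\alpha}\circ_i * = id_{\boldsymbol{m}}$ for $i=1,2,3$. Applying $G$ and using compatibility with $\circ_i *$ then yields exactly the stated relations $\alpha\circ_i id_* = id_m$. This establishes the forward direction and shows these relations are necessary.

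For the converse I would define $G$ to agree with $g$ on positive arities and to send $*\mapsto id_*$, and then verify that $G$ is a map of operads. Compatibility with the compositions internal to positive arities is exactly the statement that $g$ is an operad map, so the only thing left to check is the nullary compatibility $G(\xi\circ_i *) = G(\xi)\circ_i id_*$ for every morphism $\xi$. Because composing with the fixed morphism $id_*$ at a slot is a functor of groupoids and commutes with operadic composition via associativity, this identity is compatible with both categorical and operadic composition; hence, by induction on the way $\xi$ is built from the generators, it suffices to verify it on the generators $\boldsymbol{m}$, $\boldsymbol{\beta}$ and $\boldsymbol{\alpha}$ (and their inverses, which are automatic). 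On objects the check is automatic because $\ob\mathsf{P}(1)=*$, and on $\boldsymbol{\alpha}$ it is the hypothesis.

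The main obstacle is the generator $\boldsymbol{\beta}$: here one must show $\beta\circ_i id_* = id$ in the group $\mathsf{P}(1)$, which is not among the hypotheses and so must be derived. I expect to obtain it by applying a strand-deletion functor $-\circ_j id_*$ to one of the hexagon relations~(\ref{hexigon_rel_1})--(\ref{hexigon_rel_2}): under this functor the three associator edges become identities (by $\Sigma$-equivariance together with $\alpha\circ_i id_* = id_m$), while the three braiding edges reduce to $\beta$, $\sigma\beta$ and $\beta\circ_i id_*$, so that the hexagon collapses to an identity forcing $\beta\circ_i id_* = id$. Carrying out this reduction carefully, keeping track of the symmetric-group actions on the permuted objects of $\mathsf{P}(3)$, is the delicate computational step; once it is in place, the inductive bootstrap of the previous paragraph completes the identification.
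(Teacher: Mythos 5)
Your outline is structurally sound and is genuinely different from the paper's proof, which consists entirely of a citation to Fresse \cite[Theorem 6.2.4(c)]{FresseBook1}; in effect you are reconstructing that argument. The forward direction is correct, the reduction of the converse to a check on the generators $\boldsymbol{\beta}$, $\boldsymbol{\alpha}$ is legitimate (strand deletion $-\circ_i id_*$ is functorial, interacts with operadic composition through the associativity axioms, and $\pab$ is generated by these morphisms), the object-level check does rest on $\ob\mathsf{P}(1)=*$ exactly as you say, and you correctly isolate the crux: the relation $\beta\circ_i id_* = id$ is not a hypothesis and must be derived from the hexagons together with $\alpha\circ_i id_* = id_m$. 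This is the operadic counterpart of the Joyal--Street observation that braiding with a unit object is trivial.

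However, your description of how the hexagon collapses is not what actually happens, and as stated the argument has a gap. Apply $-\circ_3 id_*$ to the hexagon (\ref{hexigon_rel_1}): the three associator edges indeed become identities, and the two braiding edges $m\circ_1\beta$ and $\beta\circ_2 m$ reduce to $\beta$ (their inner composites $id_m\circ_2 id_*$ collapse to $id_1$ using $\ob\mathsf{P}(1)=*$ and the unit axiom), but the third braiding edge $(213)(m\circ_2\beta)$ reduces not to $\beta\circ_2 id_*$ itself but to the \emph{whiskered} morphism $(21)\bigl(id_m\circ_2(\beta\circ_2 id_*)\bigr)$. So the hexagon yields only the relation $id_m\circ_2(\beta\circ_2 id_*) = id_m$ in $\mathsf{P}(2)$, not the desired relation in $\mathsf{P}(1)$; in a general operad the whiskering functor $t\mapsto id_m\circ_2 t$ from $\mathsf{P}(1)$ to $\mathsf{P}(2)$ has no reason to be injective, so this is strictly weaker. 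The fix is one more application of your own tool: whiskering by $id_m$ is split injective precisely because $\mathsf{P}$ has nullary operations. Applying $-\circ_1 id_*$ to the relation, the left side becomes $(id_m\circ_1 id_*)\circ_1(\beta\circ_2 id_*) = id_1\circ_1(\beta\circ_2 id_*) = \beta\circ_2 id_*$ (using associativity, the fact that $m\circ_1 *_{\mathsf{P}}$ must be the operadic unit since $\ob\mathsf{P}(1)=*$, and the unit axiom), while the right side becomes $id_1$; hence $\beta\circ_2 id_* = id$. The relation $\beta\circ_1 id_* = id$ follows symmetrically from the other hexagon or from a different strand deletion. With this cancellation step inserted, your inductive bootstrap goes through and the proof is complete.
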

\begin{proof}
See \cite[6.2.4(c)]{FresseBook1}.
\end{proof}

\begin{lemma}\label{lem:maps_parbplus}
Let $\mathsf{P}$ be an operad in groupoids with $\mathsf{P}(0) = *$. The set of operad maps from $\parb_+$ to $\mathsf{P}$ is identified with the set of pairs $(g, \tau)$ where $g = (m,\beta,\alpha)$ is an operad map from $\pab_+$ to $\mathsf{P}$ and $\tau$ is a morphism in $\mathsf{P}(1)$, subject to the relation that the operadic composition $\tau \circ_1 id$ agrees with the categorical composition $\beta \cdot \sigma\beta \cdot (id \circ (\tau,\tau))$.
\end{lemma}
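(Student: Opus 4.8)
The plan is to mirror the proof of Lemma~\ref{lem:maps_parb}, now keeping track of the arity-zero operation, and to reduce the genuinely new content to Lemma~\ref{lem:maps_pabplus}.

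First I would treat the forward direction. Given an operad map $f : \parb_+ \to \mathsf{P}$, restriction along the inclusion $\pab_+ \hookrightarrow \parb_+$ produces an operad map $g = (m,\beta,\alpha) : \pab_+ \to \mathsf{P}$, and setting $\tau := f(\boldsymbol{\tau}) \in \mor\mathsf{P}(1)$ gives the arity-one datum. The relation $\tau \circ_1 id = \beta \cdot \sigma\beta \cdot (id \circ (\tau,\tau))$ holds simply because the corresponding relation $\boldsymbol{\tau} \circ_1 id = \boldsymbol{\beta} \cdot \sigma\boldsymbol{\beta} \cdot (id \circ (\boldsymbol{\tau},\boldsymbol{\tau}))$ already holds in $\parb_+$ and is preserved by $f$. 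This yields the pair $(g,\tau)$.

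Conversely, given a pair $(g,\tau)$ satisfying the stated relation, I would construct $f : \parb_+ \to \mathsf{P}$. On positive arities this is verbatim the construction of Lemma~\ref{lem:maps_parb} applied to the restriction of $g$ along $\pab \to \pab_+$: a morphism $\gamma \cdot [n_1, \dots, n_k]$ in $\parb_+(k)$ with $k > 0$ is sent to $g(\gamma) \cdot (g(id) \circ (\tau^{n_1}, \dots, \tau^{n_k}))$, and that proof already shows this respects every positive-arity operadic composition, using exactly the arity-two relation. In arity zero there is nothing to choose, since $\parb_+(0) = *$ and $\mathsf{P}(0) = *$ force the unique object and its identity to be preserved.

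The only new verification is compatibility with the arity-zero composition maps $\circ_i : \parb_+(n) \times \parb_+(0) \to \parb_+(n-1)$, which on morphisms remove the $i^{th}$ strand. For the underlying braid this is precisely the content of Lemma~\ref{lem:maps_pabplus}, which is available because $g$ is given as a map out of $\pab_+$ rather than merely $\pab$. For a general morphism I would split $\gamma \cdot [n_1,\dots,n_k]$ into its braid part and its twist part, as in the proof of Lemma~\ref{lem:maps_parb}. Removing the $i^{th}$ strand deletes the twist $\tau^{n_i}$ carried by that strand; on the $\mathsf{P}$ side, the operad axioms together with $\tau^{n_i} \circ_1 id_* = id_*$, which is forced by $\mathsf{P}(0) = *$, show that $f([n_1,\dots,n_k]) \circ_i id_*$ equals $f$ of the list with the $i^{th}$ entry deleted. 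Combining this with the $\pab_+$-compatibility of $g$ gives the required equality. The main obstacle is entirely bookkeeping: tracking how strand removal acts on the braid-versus-twist decomposition. Conceptually there is no difficulty, because any twist on a removed strand is killed once it is composed into the terminal groupoid $\mathsf{P}(0) = *$.
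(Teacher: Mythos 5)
Your proposal is correct and takes essentially the same approach as the paper: reuse the proof of Lemma \ref{lem:maps_parb} verbatim, with the single genuinely new check being compatibility with the arity-zero compositions, which holds automatically because every morphism landing in $\mathsf{P}(0)=*$ is forced to be the identity. (One minor remark: for the braid part you do not need the full content of Lemma \ref{lem:maps_pabplus}, only the fact --- which you also state --- that $g$ is an operad map defined on all of $\pab_+$ and hence already respects strand removal.)
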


\begin{proof}
The proof of Lemma \ref{lem:maps_parb} applies verbatim, but one extra check needs to be completed, namely that
\[
f([n_1, \dots, n_k] \circ_i id_*) = f([n_1, \dots, n_k]) \circ_i f(id_*) 
\]
where $id_*$ denotes the identity map of the (unique) element in arity $0$. This can be reduced to checking that $f([1] \circ id_*) = f([1]) \circ_i f(id_*)$, which holds automatically because $\mathsf{P}(0) = *$.
\end{proof}

\begin{prop}\label{prop: plus end to ho}
The restriction map $\End_0(\parb_+) \to \End_0(\parb)$ is an isomorphism.
\end{prop}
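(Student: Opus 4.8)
The plan is to reduce the statement, by means of the structural Lemmas~\ref{lem:maps_parb}, \ref{lem:maps_parbplus} and~\ref{lem:maps_pabplus}, to a concrete verification about the associator in the pure braid group $\pb(3)$. First I would reduce to the parenthesized braid operads. By Lemma~\ref{lem:maps_parbplus} applied with $\mathsf{P}=\parb_+$ (note $\parb_+(0)=*$), an endomorphism of $\parb_+$ fixing the objects is a pair $(g,\tau)$, where $g$ is an operad map $\pab_+\to\parb_+$ and $\tau$ is a morphism in $\mor\parb_+(1)=\mor\parb(1)$, subject to the relation $\tau\circ_1 id=\beta\cdot\sigma\beta\cdot(id\circ(\tau,\tau))$. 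Composing $g$ with the forgetful map $\parb_+\to\mathsf{T}$ to the operad of twists and invoking Lemma~\ref{lemm: maps pab to T} (exactly as in the proof of Proposition~\ref{prop: endomorphisms fixing the objects}(1)) shows that $g$ preserves $\pab_+$, so $g\in\End_0(\pab_+)$; and the computation of Proposition~\ref{prop: endomorphisms fixing the objects}(2) shows $\tau$ is determined by $g$ (one finds $\beta_1=\tau$), with the identical discussion applying to $\parb$ via Lemma~\ref{lem:maps_parb}. Thus the restriction map fits into a commutative square whose remaining three maps are the restriction $\End_0(\pab_+)\to\End_0(\pab)$ together with two isomorphisms $\End_0(\parb_+)\cong\End_0(\pab_+)$ and $\End_0(\parb)\cong\End_0(\pab)$ (the latter being Proposition~\ref{prop: endomorphisms fixing the objects}). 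It therefore suffices to prove that $\End_0(\pab_+)\to\End_0(\pab)$ is an isomorphism.

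Next I would analyse this map through Lemma~\ref{lem:maps_pabplus}, which identifies $\End_0(\pab_+)$ with the set of $g=(m,\beta,\alpha)\in\End_0(\pab)$ satisfying the three degeneracy relations $\alpha\circ_i id_*=id_m$ for $i=1,2,3$. The restriction map is then simply the inclusion of this subset, and it is automatically injective because the arity-zero part of any extension is forced by $\pab_+(0)=*$. Everything therefore reduces to \emph{surjectivity}: I must show that the degeneracy relations hold for \emph{every} $g\in\End_0(\pab)$.

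To check this I would pass to the pure braid group. Since $(12)3$ and $1(23)$ carry the same (trivial) permutation, the associator $\alpha$ lies in $\pb(3)$, and I would use the splitting $\pb(3)\cong\mathbb{Z}\times\mathbb{F}_2$, in which $\mathbb{Z}$ is generated by the central full twist $z=x_{12}x_{13}x_{23}$ and $\mathbb{F}_2=\langle x_{12},x_{23}\rangle$, to write $\alpha=z^n f$. The composition $\circ_i id_*$ is strand removal, i.e. the homomorphism $\pb(3)\to\pb(2)=\mathbb{Z}$ killing the generators meeting the erased strand. A direct computation gives $\alpha\circ_2 id_*=x_{12}^{n}$, while $\alpha\circ_1 id_*$ and $\alpha\circ_3 id_*$ equal $x_{12}$ raised to $n$ plus one of the two exponent-sum homomorphisms $\mathbb{F}_2\to\mathbb{Z}$ applied to $f$. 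Consequently the three degeneracy relations are together equivalent to $n=0$ and $f\in[\mathbb{F}_2,\mathbb{F}_2]$. The remaining point is to derive these from the relations defining $\End_0(\pab)$: abelianizing the hexagon relations~\eqref{hexigon_rel_1}--\eqref{hexigon_rel_2} in $\pb(3)^{\mathrm{ab}}\cong\mathbb{Z}^3$ forces the central exponent $n$ to vanish, and abelianizing the pentagon relation in $\pb(4)$ forces $f$ into the commutator subgroup (Drinfel'd's computation, \cite[Section 4]{drinfeld}). I note that the vanishing $n=0$ is precisely the fact invoked without proof in the proof of Proposition~\ref{prop:operadic definition of gt}.

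The main obstacle is this last computation, and in particular the extraction of $n=0$ from the hexagons. The delicate points are matching the conventions for strand removal with those for the splitting $\pb(3)\cong\mathbb{Z}\times\mathbb{F}_2$, and genuinely deducing $n=0$ from the hexagon relations rather than circularly from the degeneracy relation under scrutiny. Once the abelianized hexagon and pentagon relations are in hand, the degeneracy relations hold for every $g\in\End_0(\pab)$, which yields surjectivity and hence the desired isomorphism.
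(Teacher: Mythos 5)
Your reduction is exactly the paper's: the same commutative square of restriction maps, with Lemmas \ref{lem:maps_parbplus} and \ref{lem:maps_pabplus} and Proposition \ref{prop: endomorphisms fixing the objects}, reduces everything to showing that the degeneracy relations $\alpha\circ_i id_*=id_m$ hold automatically for every $(m,\beta,\alpha)\in\End_0(\pab)$, and your translation of those relations into ``$n=0$ and $f\in[\mathbb{F}_2,\mathbb{F}_2]$'' via the splitting $\pb(3)\cong\mathbb{Z}\times\mathbb{F}_2$ and strand removal is correct. The gap is in how you verify these two conditions. ``Abelianizing the hexagon relations in $\pb(3)^{\mathrm{ab}}$'' is not an available operation: the hexagons (\ref{hexigon_rel_1})--(\ref{hexigon_rel_2}) are equations between \emph{non-pure} braids (both sides have underlying permutation a $3$-cycle, and individual terms such as $m\circ_1\beta$ and $\beta\circ_2 m$ do not lie in $\pb(3)$), so their terms have no classes in $\pb(3)^{\mathrm{ab}}$ to sum. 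The only termwise abelianization that makes sense is in $\br(3)^{\mathrm{ab}}\cong\mathbb{Z}$, and there each hexagon yields the single equation $3n+e_{12}(f)+e_{23}(f)=0$ (the $\beta$-contributions cancel), which does not force $n=0$. Extracting $n=0$ from a hexagon genuinely requires the finer invariant used in the paper: apply the strand-removal maps $\partial_i$ to (\ref{hexigon_rel_1}), tracking through each non-pure term which strand is removed; the resulting bookkeeping $0+n+\beta=n+\beta+n$ gives $\partial_2(\alpha)=n=0$, and similar arguments handle $\partial_1$ and $\partial_3$. This is precisely the ``delicate point'' you flag but do not carry out, so as written your proof is incomplete at its computational core.

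The good news is that your own pentagon step, properly exploited, closes the gap by itself and yields a complete argument genuinely different from the paper's. The pentagon is an honest relation in $\pb(4)$, namely $(id\circ_1\alpha)\cdot(\alpha\circ_2 id)\cdot(id\circ_2\alpha)=(\alpha\circ_1 id)\cdot(\alpha\circ_3 id)$, and the insertion maps $id\circ_i(-)$ and cabling maps $(-)\circ_i id$ are group homomorphisms $\pb(3)\to\pb(4)$, so termwise abelianization is legitimate here. Writing $[\alpha]=(a_{12},a_{13},a_{23})\in\pb(3)^{\mathrm{ab}}$ and using, for instance, that doubling strand $1$ sends $x_{12}\mapsto x_{13}+x_{23}$, $x_{13}\mapsto x_{14}+x_{24}$, $x_{23}\mapsto x_{34}$, the coefficients of $x_{12}$, $x_{14}$ and $x_{34}$ in the abelianized pentagon read $2a_{12}=a_{12}$, $a_{13}=2a_{13}$ and $2a_{23}=a_{23}$, whence $[\alpha]=0$ in $\pb(3)^{\mathrm{ab}}$. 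Since $[z^nf]=(n+e_{12}(f),\,n,\,n+e_{23}(f))$, this gives $n=0$ \emph{and} $f\in[\mathbb{F}_2,\mathbb{F}_2]$ in one stroke; the hexagons are not needed at all. So the repaired version of your proof is: same reduction, then the abelianized pentagon alone; whereas the paper derives all three vanishing statements $\partial_i(\alpha)=0$ from the hexagon via strand removal. Both routes work, and yours (repaired) avoids the strand-tracking subtleties entirely.
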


\begin{proof}
Exactly as in Proposition \ref{prop: endomorphisms fixing the objects}, we can prove that any endomorphism of $\parb_+$ fixing the objects has to restrict to an endomorphism of the suboperad $\pab_+$. This is by definition the suboperad with the same objects but only those morphisms that have trivial twists. We thus get a commutative square of restriction maps 
\[
\xymatrix{
\End_0(\parb_+)\ar[r]\ar[d]&\End_0(\pab_+)\ar[d]\\
\End_0(\parb )\ar[r]& \End_0(\pab)
}
\]
the lower horizontal map is an isomorphism by Lemma \ref{prop: endomorphisms fixing the objects} and the top horizontal map is an isomorphism by Lemma \ref{lem:maps_parbplus}. In order to see that the right-hand vertical map is an isomorphism it is enough, by Lemma \ref{lem:maps_pabplus}, to show that for any map $g : \pab \to \pab_+$ fixing the objects, the equation $\alpha \circ_i id_* = id_{m}$ automatically holds. This equation boils down to 
the condition that the image of $\alpha$, viewed as an element in the pure braid group on three strands, under each of the three maps $\partial_i : \pb(3) \to \pb(2) \cong \mathbb{Z}$ which forgets the $i^{th}$ strand, is zero. We can use one of the hexagon relations to deduce this. We explain this for $i = 2$ (the middle strand), the other cases can be treated similarly. Recall that $\beta \in \pab_+(2)$ is by definition the image of $\boldsymbol{\beta} \in \pab(2)$ under $g$. The hexagon relation (\ref{hexigon_rel_1}) reads
\begin{equation}\label{eq:hexagain}
(m \circ_1 \beta) \cdot (213)\alpha \cdot (213)(m\circ_2 \beta) = \alpha \cdot (\beta \circ_2 m ) \cdot (231)\alpha
\end{equation}
in $\pb(3)$. (In this equation, $m$ is short-hand for $id_m$.) By inspection, we have that
\[
\partial_2(m \circ_1 \beta) = 0 \; , \quad \partial_2((213)m \circ_2 \beta) = \beta \quad and \quad  \partial_2 (\beta \circ_2 m ) = \beta
\]
in $\pb(2)$. (To verify these it may be helpful to draw a picture.) Write $n$ for $\partial_2(\alpha) = \partial_2((231)\alpha)$. Now apply $\partial_2$ to both sides of (\ref{eq:hexagain}) to deduce that $0 + n + \beta = n + \beta + n$. Hence, $\partial_2(\alpha) = 0$ as claimed.
\end{proof}

\begin{cor}\label{coro: gt is endo unital}
The action of $\ugt$ on $\parb$ extends to an action on $\parb_+$ and the induced map
\[\ugt\to\End_0(\parb_+)\]
is an isomorphism.
\end{cor}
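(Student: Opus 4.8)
The plan is to exhibit the map $\ugt \to \End_0(\widehat{\parb_+})$ as a composite of isomorphisms, the only genuinely new ingredient being the profinite analogue of Proposition \ref{prop: plus end to ho}. First I would record the identification furnishing the action of $\ugt$ on $\widehat{\parb}$: Proposition \ref{prop:operadic definition of gt} gives $\ugt \cong \End_0(\widehat{\pab})$, and the profinite variant of Proposition \ref{prop: endomorphisms fixing the objects} gives that the restriction $\End_0(\widehat{\parb}) \to \End_0(\widehat{\pab})$ is an isomorphism, so that $\ugt \cong \End_0(\widehat{\parb})$.

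Next I would prove the profinite analogue of Proposition \ref{prop: plus end to ho}, namely that the restriction $\End_0(\widehat{\parb_+}) \to \End_0(\widehat{\parb})$ is an isomorphism of monoids. The argument of Proposition \ref{prop: plus end to ho} transcribes directly: the profinite forms of Lemmas \ref{lem:maps_pabplus} and \ref{lem:maps_parbplus} identify the object-fixing endomorphisms on either side with the same triple-and-twist data, so it remains only to verify that for an object-fixing map $g : \widehat{\pab} \to \widehat{\pab_+}$ the relation $\alpha \circ_i \mathrm{id}_* = \mathrm{id}_m$ holds automatically. As before this reduces to showing that $\partial_i(\alpha) = 0$ in $\widehat{\pb}(2) \cong \widehat{\mathbb{Z}}$ for each of the three strand-forgetting maps $\partial_i : \widehat{\pb}(3) \to \widehat{\pb}(2)$; applying $\partial_2$ to the hexagon relation (\ref{eq:hexagain}), now read in $\widehat{\pb}(3)$, yields the identity $n + \beta = n + \beta + n$ and hence $\partial_2(\alpha) = 0$, the cases $i = 1, 3$ being analogous.

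Finally I would assemble these maps. The inverse of the isomorphism $\End_0(\widehat{\parb_+}) \xrightarrow{\ \sim\ } \End_0(\widehat{\parb})$ assigns to each object-fixing endomorphism of $\widehat{\parb}$ its unique extension over $\widehat{\parb_+}$, so the $\ugt$-action extends as claimed, and the composite $\ugt \cong \End_0(\widehat{\parb}) \cong \End_0(\widehat{\parb_+})$ is the asserted isomorphism of monoids. The step I expect to be the crux is the profinite hexagon computation in $\widehat{\pb}(3)$, but since the defining relations are unchanged under completion it is no harder than the discrete computation already carried out in Proposition \ref{prop: plus end to ho}; indeed the substantive point --- that the associator of an object-fixing endomorphism has vanishing boundary, so that compatibility with the arity-zero operation is forced --- has already been settled there.
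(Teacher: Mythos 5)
Your proposal is correct and takes essentially the same route as the paper: there the corollary is an immediate consequence of Proposition \ref{prop: plus end to ho} (whose profinite analogue the paper asserts without proof in the proof of Theorem \ref{thm:mainunital}) combined with the identification $\ugt \cong \End_0(\widehat{\pab}) \cong \End_0(\widehat{\parb})$. You have simply made explicit the profinite transcription the paper leaves implicit — correctly reading the statement as one about completions, and checking that the hexagon computation forcing $\partial_i(\alpha)=0$ goes through verbatim in $\widehat{\pb}(3)$.
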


In analogy with previous notations, we write $\Hom_0(\parb_\star, \parb_+)$ for the set of operad maps which induce the map $u : \ob \parb_\star \to \ob \parb_+$ on objects.

\begin{lemma}\label{lem:stupidhardlemma}
The map $\End_0(\parb_+) \to \Hom_0(\parb_\star, \parb_+)$ is an isomorphism.
\end{lemma}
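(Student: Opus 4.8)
The map under consideration is precomposition with $v \colon \parb_\star \to \parb_+$: if $\phi \in \End_0(\parb_+)$ fixes objects then $\phi \circ v$ induces $\phi_{\ob}\circ u = u$ on objects, so it lands in $\Hom_0(\parb_\star,\parb_+)$. The plan is to treat injectivity and surjectivity separately, the latter being the only real content. Injectivity is formal: the map $v$ is surjective on objects (every object of $\parb_+(n) = \mathsf{M}(n)$ is the $*$-free object of the same name in $\parb_\star$, and $u$ of it) and, with objects fixed to their $*$-free representatives, it is the identity on the corresponding hom-sets by the very definition of the morphisms of $\parb_\star$. Hence $v$ is levelwise surjective on objects and morphisms, and if $\phi_1 v = \phi_2 v$ then for any $\gamma \colon a\to b$ in $\parb_+$ I lift $\gamma$ to its $*$-free representative $\tilde\gamma$ and get $\phi_1(\gamma)=\phi_1 v(\tilde\gamma)=\phi_2 v(\tilde\gamma)=\phi_2(\gamma)$.

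For surjectivity I would descend a given $F$ along $v$. Write $\mathsf{U}\subseteq\parb_\star$ for the wide sub-operad-in-groupoids whose morphisms are the isomorphisms with trivial underlying ribbon braid between objects having the same image under $u$; thus over each object $a$ of $\parb_+(n)$ the groupoid $\mathsf{U}(n)$ is codiscrete on the fiber $u^{-1}(a)$, and $v$ sends every morphism of $\mathsf{U}$ to an identity. Defining $\phi$ to be the identity on objects and $\phi(\gamma):=F(\tilde\gamma)$ on morphisms (for $\tilde\gamma$ any lift of $\gamma$) produces a well-defined operad endomorphism of $\parb_+$ with $\phi\circ v = F$ \emph{provided} $F$ sends every morphism of $\mathsf{U}$ to an identity, since any two lifts differ by pre- and post-composition with morphisms of $\mathsf{U}$. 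So everything reduces to the claim that every $F\in\Hom_0(\parb_\star,\parb_+)$ kills $\mathsf{U}$.

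I would prove this by reducing to the two unit isomorphisms $\boldsymbol\lambda\colon e_2\circ_1 e_0\to\mathbb{1}$ (i.e. $(*1)\to 1$) and $\boldsymbol\rho\colon e_2\circ_2 e_0\to\mathbb{1}$ (i.e. $(1*)\to 1$). Every object of a fiber $u^{-1}(a)$ is connected to the distinguished $*$-free object $a$ by repeatedly deleting $*$-leaves, i.e. by a categorical composite of operadic translates $\mathrm{id}\circ_i\boldsymbol\lambda$ and $\mathrm{id}\circ_i\boldsymbol\rho$; since these have trivial underlying braid they realise the canonical morphisms of $\mathsf{U}$, so it suffices to show that the integers $\ell:=F(\boldsymbol\lambda)$ and $r:=F(\boldsymbol\rho)$ in $\mathbb{Z}=\End_{\parb_+(1)}(1)$ vanish. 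Writing $\alpha=F(\boldsymbol\alpha)$ and $\beta=F(\boldsymbol\beta)$, the twist-forgetting argument of Proposition~\ref{prop: endomorphisms fixing the objects}(1) (via the map to the operad of twists $\mathsf{T}$) shows $\alpha,\beta$ have no twists, so $\alpha\in\pb(3)$; the hexagon relation then forces $\partial_i(\alpha)=\mathrm{id}$ for $i=1,2,3$ under forgetting a strand $\pb(3)\to\pb(2)$, which is exactly the computation in the proof of Proposition~\ref{prop: plus end to ho}. Equivalently $F(\boldsymbol\alpha\circ_i e_0)=\mathrm{id}$. Finally I apply $F$ to the triangle relation, which in $\parb_\star(2)$ equates
\[
((1*)2)\xrightarrow{\ \boldsymbol\alpha\circ_2 e_0\ }(1(*2))\xrightarrow{\ \mathrm{id}_{e_2}\circ_2\boldsymbol\lambda\ }(12)
\qquad\text{with}\qquad
((1*)2)\xrightarrow{\ \mathrm{id}_{e_2}\circ_1\boldsymbol\rho\ }(12).
\]
Its image reads $\partial_2(\alpha)\cdot[0,\ell]=[r,0]$ in $\prb(2)=\pb(2)\times\mathbb{Z}^2$, and since $\partial_2(\alpha)=\mathrm{id}$ this collapses to $[0,\ell]=[r,0]$, forcing $\ell=r=0$. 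This kills the generators of $\mathsf{U}$ and hence all of $\mathsf{U}$, completing surjectivity.

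The step I expect to be the main obstacle is the honest bookkeeping behind the reduction in the third paragraph: verifying that $\mathsf{U}$ is generated by $\boldsymbol\lambda,\boldsymbol\rho$ (deleting $*$-leaves connects any object of a fiber to its reduction) together with the unit-involving associators, and that the coherence relations I invoke — triangle and hexagon — are genuinely available to an arbitrary $F\in\Hom_0(\parb_\star,\parb_+)$. This amounts to pinning down a generators-and-relations presentation of $\parb_\star$ in the spirit of Lemmas~\ref{lem:maps_pabplus} and~\ref{lem:maps_parbplus}, and is the kind of Mac Lane coherence argument that is conceptually routine but technically tedious; I would either cite Fresse's treatment of the unital parenthesized operads or spell out the deletion-of-units argument directly.
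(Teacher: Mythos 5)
Your proof is correct, and while its outer skeleton coincides with the paper's (injectivity because $v$ is an epimorphism; surjectivity by reducing to the claim that any $F \in \Hom_0(\parb_\star, \parb_+)$ sends every morphism with trivial underlying ribbon braid between objects in the same $u$-fiber --- your $\mathsf{U}$, the paper's morphisms $a,b$ with $ua, ub$ identities --- to identities), you settle that key claim by a genuinely different route. The paper restricts $F$ to a map $\pab_\star \to \pab_+$ via the twist-operad argument and then cites \cite[Lemma 6.6]{Horel1} to conclude that such a map comes from $\Hom_0(\pab_+,\pab_+)$ and hence kills unit morphisms. You instead argue internally: $\mathsf{U}$ is generated under categorical and operadic composition and inverses by $\boldsymbol{\lambda}$ and $\boldsymbol{\rho}$; Lemma \ref{lemm: maps pab to T} applied to $F$ restricted to $\pab \subset \pab_\star$ makes $F(\boldsymbol{\alpha})$ and $F(\boldsymbol{\beta})$ twist-free, so the hexagon computation of Proposition \ref{prop: plus end to ho} gives $\partial_2(F\boldsymbol{\alpha}) = \mathrm{id}$; the triangle identity then forces $F(\boldsymbol{\lambda}) = F(\boldsymbol{\rho}) = \mathrm{id}$. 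This trades an external citation for an explicit Mac Lane-style coherence computation, which is what your approach buys: it stays inside the paper's own toolkit and makes visible exactly which relation (the triangle) rigidifies the units, at the cost of being longer than the paper's argument. The worry in your last paragraph about whether the triangle and hexagon are ``available'' to an arbitrary $F$ is unfounded for a reason worth recording once: $v$ is bijective on each hom-set, so any parallel pair of morphisms in $\parb_\star$ with equal $v$-images is equal; hence the triangle and hexagon hold in $\parb_\star$ and are preserved by any operad map, and hom-sets in $\mathsf{U}$ are singletons, so the ``unit-involving associators'' such as $\boldsymbol{\alpha}\circ_2 e_0$ need not be added as generators (each equals a deletion composed with an inverse deletion). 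The only wording to tighten is that your generating set must include operadic translates on both sides: the deletion $(*(12)) \to (12)$ is $\boldsymbol{\lambda}\circ_1 \mathrm{id}_{(12)}$, which is not of the form $\mathrm{id}\circ_i \boldsymbol{\lambda}$; this is harmless, since $F$ kills every operadic composite of identity morphisms with $\boldsymbol{\lambda}$ and $\boldsymbol{\rho}$.
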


\begin{proof}
This map is injective since the map $v:\parb_\star\to\parb_+$ is an epimorphism in the category of operads in groupoids.

Before proving surjectivity, we start by making the observation that the groupoid $\parb_+(n)$ sits naturally inside $\parb_\star(n)$ as the full subgroupoid spanned by those objects that do not have the symbol $\star$. One should observe that these maps do not assemble into a map of operads $\parb_+\to\parb_\star$. We will use these maps implicitly to see morphisms in $\parb_+$ as morphisms in $\parb_\star$ when needed. 

In order to prove the surjectivity of the map under consideration, we first observe that its image is the set of operad maps $f:\parb_\star\to\parb_+$ inducing the map $u$ on objects and with the property that $fx=fvx$ for any morphism $x$ in $\parb_\star$. Now, we observe that any morphism $x$ in $\parb_\star$ can be written as a composition $a\cdot vx\cdot b$ where $a$ and $b$ are morphisms in $\pab_\star\subset \parb_\star$ that are such that $ua$ and $ub$ are identity maps. Hence, it suffices to prove that $f(a)$ and $f(b)$ are identity morphisms. But, one can prove exactly as in Proposition \ref{prop: endomorphisms fixing the objects} that $f$ has to restrict to a map of operads $\pab_\star\to\pab_+$. Moreover, by \cite[Lemma 6.6.]{Horel1}, any map of operads $\pab_\star\to\pab_+$ lies in the image of the map
\[\Hom_0(\pab_+,\pab_+)\to\Hom_0(\pab_\star,\pab_+)\]
Therefore $f(a)$ and $f(b)$ are identity morphisms as desired.
\end{proof}

\begin{lemma}\label{lem: easylemma}
The map $$\Hom_0(\parb_\star, \parb_+) \to \hoHom(\parb_\star, \parb_+)$$ is an isomorphism.
\end{lemma}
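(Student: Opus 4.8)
The plan is to prove that the natural map is a bijection by establishing surjectivity and injectivity separately, following the blueprint of Proposition~\ref{prop:0toho}. Since $\parb_\star$ is cofibrant and every operad in groupoids is fibrant, the set $\hoHom(\parb_\star, \parb_+)$ computes $\pi_0$ of the derived mapping space, and homotopies between operad maps can be described explicitly through the cotensor $\parb_+^E$ with the interval groupoid $E$, that is, as operad-compatible natural isomorphisms.

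For surjectivity I would show that every operad map $\Phi : \parb_\star \to \parb_+$ is homotopic to one inducing the map $u$ on objects, reproducing the argument of \cite[Theorem~7.8]{Horel1}. The operad $\ob \parb_\star$ is free on a nullary and a binary generator, so $\ob \Phi$ is determined by the images of these two; the nullary generator has no choice but the unique object of $\parb_+(0)$, while the binary generator lands on $(12)$ or $(21)$. As $\parb_+(2)$ is a connected groupoid, I may pick an isomorphism from that image to $(12) = u(\text{binary generator})$, and freeness of the operad of objects lets this choice extend uniquely to an operad-compatible natural isomorphism, i.e.\ a homotopy from $\Phi$ to a map in $\Hom_0(\parb_\star,\parb_+)$.

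For injectivity, suppose $f, g \in \Hom_0(\parb_\star, \parb_+)$ are homotopic. As both induce $u$ on objects, a homotopy between them is a family of automorphisms $\eta_x \in \mathrm{Aut}_{\parb_+}(u(x))$ with $g(\phi) = \eta_y \cdot f(\phi) \cdot \eta_x^{-1}$ for every morphism $\phi : x \to y$. Writing each morphism of $\parb_\star$ as $a \cdot vx \cdot b$ as in the proof of Lemma~\ref{lem:stupidhardlemma} reduces the comparison to $\star$-free objects, hence to the operad $\parb_+$ itself. Restricting to arity three and to a fixed object $x_0$ then yields two endomorphisms of $\mathrm{Aut}_{\parb_+(3)}(x_0) \cong \prb(3)$ that preserve the subgroup $\pb(3)$ and are conjugate. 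By Lemma~\ref{lem:maps_parbplus} an element of $\Hom_0(\parb_\star, \parb_+)$ is pinned down by its underlying triple $(\beta, \alpha, \tau)$, which is detected in low arities, so it suffices to prove $f = g$ after this restriction. This I would obtain by transporting the commutative diagram of Proposition~\ref{prop:0toho} to the unital setting, replacing $\parb$ and $\pab$ by $\parb_+$ and $\pab_+$, using Lemma~\ref{lem:stupidhardlemma} and Proposition~\ref{prop: plus end to ho} for the vertical identifications and the injectivity statement of \cite[Proposition~7.7]{Horel1} along the bottom row.

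I expect the injectivity step to be the main obstacle: one must rule out that a homotopy fixing the objects produces a genuinely different map, and the safest route is the reduction to the pure ribbon braid group $\prb(3)$, where \cite[Proposition~7.7]{Horel1} supplies the needed rigidity. The delicate point is to confirm that the extra nullary operation and the $\star$-objects of $\parb_\star$ contribute no new conjugations, which is precisely what the factorization from Lemma~\ref{lem:stupidhardlemma} guarantees.
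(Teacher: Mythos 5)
Your proof is correct and follows essentially the same route as the paper: surjectivity via freeness of $\ob\parb_\star$ and the homotopy construction of \cite[Theorem 7.8]{Horel1}, and injectivity by reducing to the $\star$-free suboperad $\parb\subset\parb_\star$, where rigidity is already known, using Lemma \ref{lem:stupidhardlemma} and Proposition \ref{prop: plus end to ho} to see that nothing is lost in this restriction. The only difference is presentational: the paper packages injectivity by precomposing with the operad inclusion $\parb\hookrightarrow\parb_\star$ and citing Proposition \ref{prop:0toho} as a black box, whereas you unfold that proposition's proof (the conjugation argument through $\prb(3)$ and $\pb(3)$ and \cite[Proposition 7.7]{Horel1}) in the unital setting.
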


\begin{proof}
Denote the map under consideration by $\alpha$. We begin by proving injectivity. Unlike the inclusion of $\parb_+$ in $\parb_\star$, the inclusion of $\parb$ in $\parb_\star$ is a map of operads. Hence, by pre-composition, we obtain a map
\[
\hoHom(\parb_\star, \parb_+) \to \hoHom(\parb, \parb_+) \cong \hoEnd(\parb)
\]

that we call $\theta$. Using Lemma \ref{lem:stupidhardlemma} and Proposition \ref{prop: plus end to ho}, the map $\theta \circ \alpha$ is identified with the obvious map
\[
\End_0(\parb) \to \hoEnd(\parb) \; ,
\]
which is an isomorphism by Proposition \ref{prop:0toho}. It follows that $\alpha$ is injective.

In order to prove surjectivity, we have to show that any morphism $f$ from $\parb_\star$ to $\parb_+$ is homotopic to one which induces the map $v$ on objects. Since $\ob \parb_\star$ is freely generated by $*$ and $(12)$ in degrees 0 and 2, and since $\parb_+(0)$ is a point, $\ob f$ is determined by the image of $(12)$. This image is either $(12)$, in which case $\ob f = v$, or $(21)$. In the second case, a homotopy of $f$ with the required property can be constructed as in \cite[Theorem 7.8]{Horel1}.
\end{proof}

\begin{proof}[Proof of Theorem \ref{thm:mainunital}]
We first have an isomorphism
\[\pi_0\REnd(\widehat{N\mathcal{M}}_+)\simeq\pi_0\REnd(BN\widehat{\parb}_+)\]
which comes from the fact that the map
\[(B\parb_+(n))^{\wedge}\to B(\parb_+(n))^{\wedge}\]
is an equivalence (by goodness of the pure ribbon braid groups as in Lemma \ref{lemm: goodness of pure ribbon braid groups}). By full faithfulness of the functor $B$, we are reduced to proving that the action of $\ugt$ on $\parb_+$ induces an isomorphism
\[\ugt\cong \pi_0\REnd(N\widehat{\parb}_+) \; .\]
Corollary \ref{coro: gt is endo unital}, Lemma \ref{lem:stupidhardlemma} and Lemma \ref{lem: easylemma} also hold if we replace $\parb$ and its variants by their profinite completion. Hence, we deduce an isomorphism
\[\ugt\cong \hoHom(\widehat{\parb}_\star,\widehat{\parb}_+)\]
By adjunction, this gives us an isomorphism
\[\ugt\cong\hoHom(\parb_\star,|\widehat{\parb}_+|) \cong\pi_0\Rmap(\parb_\star,|\widehat{\parb}_+|) \]
where the second isomorphism comes from the fact that $\parb_\star$ is cofibrant. Since $\parb_\star\to\parb_+$ is an equivalence, we see that the action of $\ugt$ on $\parb_+$ induces an isomorphism
\[\ugt\cong \pi_0\Rmap(\parb_+,\widehat{\parb}_+)\]
But we have an isomorphism
\[\pi_0\Rmap(\parb_+,|\widehat{\parb}_+|)\cong \pi_0\Rmap(N\parb+,|N\widehat{\parb}_+|)\]
coming from the fact that $N:\Op\G\to\Op_\infty\G$ is fully faithful and an isomorphism
\[\pi_0\Rmap(N\parb_+,|N\widehat{\parb}_+|)\cong \pi_0\Rmap((N\parb_+)^{\wedge},(N\parb_+)^{\wedge})\]
by the derived adjunction between $\infty$-operads in profinite groupoids and $\infty$-operads in groupoids.
Putting everything together we deduce the desired result.
\end{proof}

\end{document}